\documentclass[12pt]{amsart}

\usepackage[left=1in, right=1in, top=.75in, bottom=.75in]{geometry} 
\setlength{\parindent}{15pt}  
\setlength{\parskip}{.05in}

\newcommand{\Z}{\mathbb{Z}}
\newcommand{\R}{\mathbb{R}}

\newcommand{\CPbar}[1]{ \overline{\mathbb{CP}^2}}
\newcommand{\CPsum}[2]{#1 \mathbb{CP}^2 \# #2 \overline{\mathbb{CP}^2}}
\newcommand{\Fibersumd}[3]{#1 \#_{#3} #2} 
\newcommand{\set}[1]{\{#1\}} 

\newcommand{\Mod}{\operatorname{Mod}}

\usepackage{amssymb, enumerate}
\usepackage{amsmath, eqnarray, amsthm}
\usepackage{graphicx} 
\usepackage{wrapfig} 
\graphicspath{ {./images/} }
\usepackage{rotating}
\usepackage{bm}
\usepackage{verbatim}
\usepackage{xcolor}
\usepackage{hyperref}  
\usepackage{indentfirst}
\usepackage{float}
\usepackage{tikz-cd}

\usepackage{soul}\usepackage[T1]{fontenc}
\usepackage{mathtools}
\DeclareMathAlphabet{\mathbbmsl}{U}{bbm}{m}{sl}

\newtheorem{theorem}{Theorem}
\newtheorem{corollary}{Corollary}
\newtheorem{lemma}{Lemma}
\newtheorem{proposition}{Proposition}

\theoremstyle{definition}
\newtheorem{rmk}{Remark}
\newtheorem{definition}{Definition}[section]

\usepackage{pgfplots} 
\pgfplotsset{compat=1.18} 
\usetikzlibrary{patterns}
\usetikzlibrary{intersections}
\usetikzlibrary{fillbetween}
\usetikzlibrary{arrows.meta}

\title[Irreducible 4-manifolds with $\mathbf{\pi_1=\Z_2}$]
{Geography of irreducible 4-manifolds with order two fundamental group}

\author[Mihail Arabadji]{Mihail Arabadji}
\address{Department of Mathematics and Statistics, University of Massachusetts, Amherst, MA 01003-9305, USA}
\email{marabadji@umass.edu}

\author[Porter Morgan]{Porter Morgan}
\address{Department of Mathematics and Statistics, University of Massachusetts, Amherst, MA 01003-9305, USA}
\email{pamorgan@umass.edu}

\begin{document}

\begin{abstract}
	Let $R$ be a closed, oriented topological $4$--manifold whose Euler characteristic and signature are denoted by $e$ and $\sigma$. We show that if $R$ has order two $\pi_1$, odd intersection form, and $2e + 3\sigma\geq 0$, then for all but seven $(e,\sigma)$ coordinates, $R$ admits an \textit{irreducible} smooth structure. We accomplish this by performing a variety of operations on irreducible simply--connected $4$--manifolds to build $4$--manifolds with order two $\pi_1$. These techniques include torus surgeries, symplectic fiber sums, rational blow-downs, and numerous constructions of Lefschetz fibrations, including a new approach to equivariant fiber summing.
\end{abstract}
\maketitle

\section{Introduction}

Broadly speaking, this paper is concerned with four-dimensional exotica. Like many papers on the topic, our motivating question is the following: Given a smooth $4$--manifold $M$, does there exist a smooth manifold $N$ which is homeomorphic to $M$, but not diffeomorphic to it? Such a manifold $N$ would be an {\it exotic copy} of $M$. This question has been central to four--dimensional topology in recent decades. In our efforts, we seek to answer it with the following constraints; first, we start with a closed, oriented, smooth $4$--manifold $M$ having order two fundamental group and an odd intersection form. Second, we ask that $N$ be {\it irreducible}, meaning that if $N=A\#B$, then one of $A$ or $B$ is a homotopy $4$--sphere. In this case, we call $N$ an {\it irreducible copy} of $M$. If $M$ and $N$ are exotic copies of each other with distinct Seiberg-Witten invariants, one can argue that $N\# \CPbar{}$ is an exotic copy of $M\# \CPbar{}$. This gives a fairly straightforward approach to finding reducible exotic smooth structures. An irreducible copy, which cannot be achieved by blow-up, is harder to come by, and all the more exciting.

Research on the topic of irreducible exotic structures took off in the 90's and early 2000's, where much progress was made in finding irreducible copies of simply--connected $4$--manifolds. One culminating result in this search was \cite[Theorem A]{ABBKP}, which found a symplectic irreducible copy of $\CPsum{a}{b}$ for a large region of the plane $\{(a,b)\in \Z_{>0}\times \Z_{>0} : 4+5a\geq b, a \text{ is odd}\}$. In the simply--connected case we use the following symplectic results to prove irreducibility; under mild $\pi_1$ conditions, minimal symplectic $4$--manifolds are irreducible \cite{Hamilton_2006}. By the work of Taubes, all minimal, symplectic, non-ruled $4$--manifolds have a non-negative Chern number $c_1^2$ \cite{Taubes_c_positive}. This and the fact that simply--connected symplectic manifolds have odd $b_2^+$ explain the constraints on $(a,b)$ in \cite[Therem A]{ABBKP}. Four-dimensional topologists have made great strides finding minimal symplectic simply--connected $4$--manifolds, from which we often obtain other irreducible copies using torus surgery, see e.g. \cite{FS_knots}, \cite{FPS}. By contrast, we know very little about irreducible structures on simply--connected $4$--manifolds which are not symplectic.

More recently, there have been notable results on non-simply-connected $4$--manifolds admitting exotic structures. The work of Baykur, Stipsicz, and Szabó \cite{baykur2024smooth} shows that if $M$ is oriented, closed, smooth, has odd intersection form and order two $\pi_1$, unless $b_2^+(M)=b_2^-(M)\leq 7$, there are infinitely many exotic copies of $M$. To show this, they find infinite families of irreducible, distinct smooth structures for each such $M$ with signature $-1$ and $0$, then extend the results to arbitrary negative signature using the blow-up method described above. By reversing orientation, the results immediately apply to manifolds with positive signature as well. This leaves open the question of whether irreducible structures exist when $|\sigma(M)|>1$. We answer this question affirmatively when $c_1^2(M)\geq 0$, and emphasize that there are no examples of irreducible $4$--manifolds $M$ with $b_1=0$ and $c_1^2(M) <0$ known to date. 

\begin{theorem}
	Let $R_{m,n}$ be the closed, oriented, smoothable $4$--manifold with $b_2^+=m$, $b_2^-=n$, odd intersection form, and order two fundamental group.
	Let $\mathcal R$ be the region \[
	\mathcal R =\{(m,n) \in \Z_{> 0}\times \Z_{> 0}: 4+5m\geq n \text{ and } 4+5n\geq m\}.
	\] Unless $m=n\leq 7$, $R_{m,n}$ admits an irreducible smooth structure.
\label{thm:main_thm}
\end{theorem}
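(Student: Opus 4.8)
The plan is to convert the statement into a construction problem and then fill the region $\mathcal R$ lattice point by lattice point. Since $\pi_1(R_{m,n}) = \Z_2$ is finite we have $b_1 = b_3 = 0$, so $e = 2 + m + n$ and $\sigma = m - n$, whence $c_1^2 = 2e + 3\sigma = 4 + 5m - n$. The two inequalities cutting out $\mathcal R$ therefore say precisely that both $R_{m,n}$ and its orientation reversal satisfy $c_1^2 \ge 0$, which is exactly the range in which a minimal symplectic model is possible by Taubes \cite{Taubes_c_positive}. By the homeomorphism classification of closed oriented $4$--manifolds with $\pi_1 = \Z_2$ and odd intersection form (Hambleton--Kreck), such a manifold is pinned down by $(b_2^+, b_2^-)$, its Kirby--Siebenmann invariant, and its $w_2$--type; smoothability of $R_{m,n}$ forces $\mathrm{KS} = 0$. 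So it suffices to build, for each $(m,n) \in \mathcal R$ with $(m,n)\neq(k,k)$ for $k\le 7$, a closed smooth $4$--manifold with $\pi_1 = \Z_2$, odd form, the correct $w_2$--type, $b_2^\pm = m,n$, and which is \emph{irreducible}.

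For irreducibility I would force each model to be minimal and symplectic and invoke \cite{Hamilton_2006}, whose mild $\pi_1$--hypothesis (residual finiteness) is satisfied by $\Z_2$. The engine for producing $\pi_1 = \Z_2$ is the equivariant quotient: start from an irreducible, simply--connected, minimal symplectic $4$--manifold $\tilde X$ carrying a free symplectic involution $\tau$, and set $X = \tilde X / \tau$. Then $X$ is symplectic with $\pi_1 = \Z_2$ and $(b_2^+(X), b_2^-(X)) = \bigl(\tfrac12(b_2^+(\tilde X)-1),\, \tfrac12(b_2^-(\tilde X)-1)\bigr)$, so realizing $(m,n)$ amounts to producing $\tilde X$ with $b_2^\pm = 2m+1,\,2n+1$. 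Here $b_2^+(\tilde X) = 2m+1$ is automatically odd, as a simply--connected symplectic manifold requires, and $c_1^2(\tilde X) = 2\,c_1^2(X) \ge 0$ on $\mathcal R$, consistent with Taubes.

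The remaining task is to populate the plane. I would fix a small supply of seed manifolds at a handful of signatures, drawn from the known minimal symplectic simply--connected families (\cite{FS_knots}, \cite{FPS}, and the $\CPsum{a}{b}$ constructions of \cite{ABBKP}), and manufacture free involutions on them through the equivariant fiber sum: fiber--summing two copies of a block along a symplectic surface by an involution interchanging the halves yields a symplectic manifold with a free symplectic involution whose quotient has $\pi_1 = \Z_2$. To translate around the lattice, I would fiber--sum with standard pieces (symplectic tori and higher--genus surfaces in products or elliptic surfaces) whose effect on $(e,\sigma)$, hence on $(m,n)$, is explicitly computable, and apply Luttinger/torus surgeries both to adjust homology and to guarantee that $\pi_1$ collapses to exactly $\Z_2$ while an odd class survives in the form. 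Rational blow--downs would lower $b_2^-$ so as to reach the points with small $n$ relative to $m$ near the boundary $m = 4+5n$, with the mirror points handled by orientation reversal. Throughout, one checks that the operations used (Luttinger surgery, fiber sum away from $-1$--spheres) preserve minimality, so that \cite{Hamilton_2006} keeps applying.

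The main obstacle is meeting all four constraints simultaneously at every lattice point --- exactly $\Z_2$ rather than trivial or larger $\pi_1$, an odd intersection form, minimality for irreducibility, and the precise values $(b_2^+,b_2^-) = (m,n)$ --- especially near the diagonal $m \approx n$ and along the two boundary lines, where $c_1^2$ is small and there is little room to maneuver. I expect the argument to reduce to a careful partition of $\mathcal R$ into finitely many sub--regions, each realized by a specific recipe (a seed block, an equivariant fiber sum, a prescribed string of fiber sums and torus surgeries, and possibly a rational blow--down), with the $\pi_1$--computation via van Kampen on the surgered pieces being the most delicate bookkeeping. The excluded diagonal points $m = n \le 7$ are exactly the small, zero--signature corner where this machinery cannot yet produce a minimal symplectic model, matching the frontier already visible in \cite{baykur2024smooth}.
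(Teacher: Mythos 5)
There is a genuine gap at the heart of your construction scheme. You propose to ``force each model to be minimal and symplectic'' and then apply Hamilton--Kotschick (Theorem \ref{thm:HK_minimal}) directly to the quotient $X=\tilde X/\tau$ of a simply--connected minimal symplectic $\tilde X$ by a \emph{free symplectic involution}, so that $X$ is itself symplectic with $\pi_1=\Z_2$. This is impossible for half of the region $\mathcal R$: a closed symplectic $4$--manifold must have $\chi_h=\frac{1}{4}(e+\sigma)=\frac{1}{2}(1-b_1+b_2^+)\in\Z$, i.e.\ $b_2^+ + b_1$ odd, so a symplectic manifold with finite fundamental group (hence $b_1=0$) necessarily has $b_2^+$ \emph{odd}. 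Consequently no symplectic $4$--manifold realizes $R_{m,n}$ with $m$ even, and no free symplectic involution with the properties you require can exist when the target $b_2^+(X)=m$ is even (equivalently, $b_2^+(\tilde X)=2m+1\equiv 1 \bmod 4$ does not save you: the obstruction lives on the quotient). Your engine is therefore vacuous on all lattice points with $m$ even, which is precisely where the hard work lies.

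The paper's resolution is structurally different in exactly this respect. In the $\Z_2$--construction, the gluing involution restricts on the boundary $S^1\times\Sigma_g$ to $\mathrm{id}_{S^1}\times\phi$ with $\phi$ an \emph{orientation--reversing} free involution of the fiber (the deck transformation of $\Sigma_g\to \#_{g+1}\mathbb{RP}^2$); the resulting involution on the double $\widetilde X$ is orientation--preserving but \emph{not} symplectic, and the quotient $X$ is a non-symplectic smooth manifold (as it must be, by the parity constraint). Irreducibility is then obtained indirectly: the double $\widetilde X$ is a (twisted) symplectic fiber sum, shown minimal via Usher's theorem (Theorem \ref{thm:usher}) and hence irreducible by Hamilton--Kotschick, and irreducibility \emph{descends} to the quotient because any nontrivial connected-sum splitting $X=A\# B$ (with $B$ simply--connected, as $\Z_2$ is freely indecomposable) would lift to a splitting $\widetilde X=\widetilde A\# B\# B$; this is Proposition \ref{cor:Z2_minimal}. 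A further consequence of the same parity issue is that near the line $c_1^2=0$ even this double-cover argument with minimal symplectic seeds is insufficient, and the paper resorts to fiber-reversing doubles of non-minimal Lefschetz fibrations followed by equivariant lantern substitutions (rational blow-downs), with irreducibility again checked on the cover. For odd $m$ your outline is closer to the truth, though the paper mostly avoids quotients there, producing symplectic $\pi_1=\Z_2$ manifolds directly by $1/2$--Luttinger surgery on telescoping triples. One smaller point: you treat the $w_2$--type as an extra constraint to arrange, but the paper shows it is automatic --- odd intersection form together with $\pi_1=\Z_2$ forces $w_2$--type (i) --- so only $(b_2^+,b_2^-)$, oddness, and $\pi_1$ need to be engineered.
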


It is often helpful to think of a smoothable $R_{m,n}$ more specifically as $L_2\#\CPsum{m}{n}$, where $L_2$ is a rational homology $4$-sphere with order two $\pi_1$. Our approach to proving Theorem \ref{thm:main_thm} is to employ an array of methods that realize irreducible copies of $R_{m,n}$ for large families of $(m,n)$--coordinates in $\mathcal R$. The process of populating a large plane of integral points with exotic copies has historically been called the \textit{geography problem}. With this terminology, our introduction so far can be summarized as follows: \cite{ABBKP} and \cite{baykur2024smooth} addressed the irreducible $\pi_1=1$ geography problem and the $\pi_1 =\Z_2$ geography problem, respectively. Here we address the \textit{irreducible} $\mathit{\pi_1=\mathbbmsl{Z}}_2$ geography problem.

\subsubsection*{Summary of techniques} To populate $\mathcal R$, we generally start with a simply--connected irreducible $4$--manifold $M$, then perform some operation to get another manifold which is closely related to $M$, but has order two $\pi_1$. The simplest of these operations is torus surgery, whereby we remove an embedded torus in $M$ and reglue it in such a way that the fundamental group changes. Torus surgery preserves Euler characteristic $e$ and signature $\sigma$; in our case, the resulting manifold will have the same $b_2^{\pm}$ numbers as $M$. Using this method, we produce symplectic manifolds, and we get irreducible copies $R_{m,n}$ for $m$ odd only. When $m$ is even, we often proceed by realizing $R_{m,n}$ as the quotient of an irreducible simply-connected manifold $M$ by an orientation--preserving free involution. This process is called the {\it $\mathbbmsl{Z}_2$--construction}, and was introduced in \cite{baykur2024smooth}. The $\Z_2$--construction will often increase $b_2^+$ and $b_2^-$ by one, and so realizes many of our $(m,n)$--coordinates with $m$ even. Although it is often straightforward to cook up a manifold $M$ admitting such an involution, showing $\pi_1(M)=1$ can be a challenging computation. This verification posed the greatest obstacle when using the $\Z_2$--construction, and many of our technical lemmas are devoted to it. Both torus surgery and the $\Z_2$--construction require an abundance of minimal, symplectic, simply--connected manifolds to operate and build on, for which we are indebted to \cite{ABBKP}. Since such simply--connected manifolds must have $c_1^2\geq 0$, our region $\mathcal R$ inherits this constraint. We are uncertain about the existence of irreducible copies of $R_{m,n}$ with negative $c_1^2$.

Torus surgery and the $\Z_2$--construction proved very effective when we performed them on minimal symplectic manifolds. However, realizing the coordinates close to the line $c_1^2 = 4+5b_2^+-b_2^-=0$ required starting with {\it non-minimal} manifolds. To realize each of these points, we take the double of a relatively minimal Lefschetz fibration given in \cite{BKS} to obtain an irreducible simply--connected Lefschetz fibration $M\to S^2$, where $c_1^2(M)=0$ and $M$ admits a free involution which reverses the fiber orientation. Then we perform a small number of rational blow--downs (or, equivalently, lantern substitutions on the corresponding positive factorization) in such a way that after the blow-downs, the resulting manifold still admits a free involution. Quotienting out by this involution gives a manifold with order two $\pi_1$ and small $c_1^2$ (in our case, $c_1^2\leq 4$). Unlike previous constructions in the literature, this method reaches points with even $b_2^+$ in addition to odd. We dub such a Lefschetz fibration the {\it fiber-reversing double}. This operation may be of interest in its own right to some readers.

We are by no means the first people to think about the irreducible $\pi_1=\Z_2$ geography problem. For instance, irreducible geography of $R_{a,b}$ was studied in \cite{Torres_2014} for $b_2^+$ odd, which was one of the culminating results for finding minimal symplectic manifolds with $\pi_1=\Z_2$. More recently, \cite{beke} and \cite{StipSzab2} provided infinite examples of irreducible, indefinite copies of $R_{a,b}$ with $b_2^+$ even. There is also progress in the definite territory, for instance \cite{levine} and \cite{StipSzab2}. Our approach to the irreducible $\Z_2$--geography offers a comprehensive treatment, in the sense that we give constructions for the entire geography plane defined in Theorem \ref{thm:main_thm} (excluding the work completed in \cite{baykur2024smooth}, which covers the lines $\sigma \in \{-1,0,1\}$ except for seven points). Our methods for low $c_1^2$ may be of particular interest, since these techniques have not appeared in the literature before. Our use of $\Z_2$--construction is also noteworthy, since this is a relatively new concept, and we demonstrate that it can be used on a variety of different $4$--manifolds.

\subsubsection*{Outline of paper} In Section 2, we provide the necessary background information about the topological classification of $4$--manifolds with cyclic $\pi_1$, Lefschetz fibrations, and operations on symplectic manifolds such as torus surgery and $\Z_2$--constructions. Section 3 develops some of the technical machinery that we need to construct irreducible copies of $R_{m,n}$. This includes careful $\pi_1$ calculations as well as a novel operation that we call a {\it fiber-reversing double} of a Lefschetz fibration. At that point, we are equipped to populate $\mathcal R$, and we do this in two chunks. Section 4 finds irreducible copies of $R_{m,n}$ when $m$ is even, and Section 5 finds irreducible copies when $m$ is odd. We note to the reader that our approach in \ref{sec:small_c} actually obtains irreducible copies of $R_{m,n}$ for even and odd $m$ simultaneously. In Section 6, we combine all points realized in Sections 4 and 5, then give some final constructions to complete the proof of Theorem 1.

\subsubsection*{Acknowledgements} The authors are grateful to their advisor, R. \.{I}nan\c{c} Baykur, for many helpful discussions. The authors were partially supported by the NSF grant DMS-2005327.

\section{Background}

\subsection{Topological classification of $4$-manifolds with $\mathbf{\pi_1\cong\Z_2}$.}

Let $L_2$ denote the spun of $\R P^3$, meaning $L_2$ is obtained by attaching $D^2\times S^2$ to $S^1\times (\R P^3-D^3)$ along $S^1\times \{pt\}\subset S^1\times \partial D^3$. $L_2$ is a rational homology $4$-sphere with fundamental group isomorphic to $\Z_2$, and is a quotient of $S^2 \times S^2$. A Kirby diagram of $L_2$ can be found in \cite[Figure 1]{baykur2024smooth}. Let $R_{a,b}=L_2\#\CPsum{a}{b}$. The aim of this paper is to construct a smooth, irreducible manifold $X_{a,b}$ homeomorphic to $R_{a,b}$. Because $X_{a,b}$ is irreducible and $R_{a,b}$ is not, such as a manifold is necessarily an exotic copy of $R_{a,b}$.

The works of Freedman and Hambleton-Kreck give a complete topological classification of $4$-manifolds with finite cyclic fundamental group. Given a closed, oriented $4$-manifold $M$ with $\pi_1=\Z_n$, the homeomorphism type of $M$ is determined by its Kirby-Siebenmann invariant, its intersection form, and its $w_2$-type \cite[Theorem C]{HambletonKreck}. We are only interested in $4$-manifolds admitting smooth structures, and those have trivial Kirby-Siebenmann invariants. On the other hand, the intersection form $Q_M$ of $M$ is determined by the rank, signature, and parity of $Q_M$. As for $w_2$-type, this may be defined in terms of the universal cover $\widetilde M$. $M$ has $w_2${\it -type (i)} if $M$ and $\widetilde M$ are both non-spin, $w_2${\it -type (ii)} if $M$ and $\widetilde M$ are both spin, and $w_2${\it -type (iii)} if $M$ is non-spin, and $\widetilde M$ is spin. When $\pi_1(M)\cong \Z_2$, there is a simple criterion for $M$ to have $w_2${\it -type (i)}, as the following proposition shows.

\begin{proposition}
	If $Q_M$ is odd and $\pi_1(M) \cong \Z_2$, then $M$ has the $w_2$-type (i).
\end{proposition}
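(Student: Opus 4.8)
The plan is to verify directly the two conditions defining $w_2$-type (i): that $M$ is non-spin and that its universal (double) cover $\widetilde M$ is non-spin. The first is immediate. For a closed oriented $4$--manifold, being spin is equivalent to $w_2(M)=0$, and by Wu's formula $w_2(M)$ equals the second Wu class $v_2$, characterized by $\langle v_2\cup x,[M]\rangle=\langle x\cup x,[M]\rangle$ for all $x\in H^2(M;\Z_2)$. Thus $w_2(M)=0$ exactly when the $\Z_2$--intersection form is even, which is equivalent to $Q_M$ being even. Since $Q_M$ is odd by hypothesis, $w_2(M)\neq 0$ and $M$ is non-spin.

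The substance of the proof is showing $\widetilde M$ is non-spin. First I would use naturality of Stiefel--Whitney classes: writing $\pi\colon\widetilde M\to M$ for the covering, $T\widetilde M=\pi^*TM$ gives $w_2(\widetilde M)=\pi^*w_2(M)$, so it suffices to show $w_2(M)$ does not lie in $\ker\big(\pi^*\colon H^2(M;\Z_2)\to H^2(\widetilde M;\Z_2)\big)$. To pin down this kernel I would invoke the Gysin sequence of the double cover, regarded as the $S^0$--bundle of the real line bundle with $w_1=\alpha$, where $\alpha\in H^1(M;\Z_2)$ is the class classifying the cover. Since $\pi_1(M)\cong\Z_2$ forces $H^1(M;\Z_2)\cong\Z_2=\langle\alpha\rangle$, the Gysin sequence identifies the degree-two kernel of $\pi^*$ with the image of $\cup\,\alpha\colon H^1(M;\Z_2)\to H^2(M;\Z_2)$, namely $\{0,\alpha^2\}$.

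The key step, and the main obstacle, is to rule out $w_2(M)=\alpha^2$, and I would argue by contradiction. Suppose $w_2(M)=\alpha^2$. For any integral class $\hat y\in H^2(M;\Z)$ with mod-two reduction $y=\rho(\hat y)$, the characteristic property of $w_2=v_2$ gives $\hat y\cdot\hat y\equiv\langle w_2\cup y,[M]\rangle=\langle\alpha^2\cup y,[M]\rangle\pmod 2$. Applying the Cartan formula to $\operatorname{Sq}^1(\alpha\cup y)$, together with $\operatorname{Sq}^1\alpha=\alpha^2$ and the Wu relation $\langle\operatorname{Sq}^1 z,[M]\rangle=\langle w_1\cup z,[M]\rangle=0$ for $z\in H^3$ (as $M$ is oriented), I obtain $\langle\alpha^2\cup y,[M]\rangle=\langle\alpha\cup\operatorname{Sq}^1 y,[M]\rangle$. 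But $\operatorname{Sq}^1 y=\operatorname{Sq}^1\rho(\hat y)=0$, since $\operatorname{Sq}^1$ is the mod-two Bockstein and reductions of integral classes are Bockstein-trivial. Hence $\hat y\cdot\hat y\equiv 0\pmod 2$ for every $\hat y$, so $Q_M$ is even, contradicting the hypothesis.

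Therefore $w_2(M)\neq\alpha^2$, so $w_2(M)\notin\ker\pi^*$ and $w_2(\widetilde M)=\pi^*w_2(M)\neq 0$; that is, $\widetilde M$ is non-spin. With $M$ non-spin as well, $M$ has $w_2$-type (i). I expect the cohomology-operation computation in the third step to be the delicate point, as it is precisely where the odd-form hypothesis enters: the congruence shows that $w_2(M)=\alpha^2$ can occur only when $Q_M$ is even, which is exactly the $w_2$-type (iii) scenario that the hypothesis is designed to exclude.
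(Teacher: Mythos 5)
Your proof is correct, and it takes a genuinely different route from the paper's. The paper's proof is short but leans on heavy machinery: by Hambleton--Kreck, $M$ is stably homeomorphic to $\Sigma \# Z'$ with $\Sigma$ a rational homology sphere and $Z'$ simply connected, so $M \# n(S^2\times S^2) \approx \Sigma \# Z$ with $Z$ odd (since $Q_M$ is), and passing to universal covers gives $\widetilde M \# 2n(S^2\times S^2) \approx Z \# Z \# \widetilde\Sigma$, which is odd; hence $\widetilde M$ has odd intersection form and in particular is non-spin. Your argument replaces the stable-classification input with standard algebraic topology: naturality $w_2(\widetilde M)=\pi^*w_2(M)$, the Gysin sequence of the double cover identifying $\ker(\pi^*)$ in degree two as $\{0,\alpha^2\}$, and the Wu/Steenrod computation (Cartan formula, $\operatorname{Sq}^1\alpha=\alpha^2$, vanishing of $\operatorname{Sq}^1$ on reductions of integral classes, $v_1=w_1=0$) showing $\langle \alpha^2\cup y,[M]\rangle=0$ for every reduction $y$ of an integral class, so $w_2(M)=\alpha^2$ would force $Q_M$ even. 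What each buys: the paper's route is two lines given the cited theorem and even yields oddness of $Q_{\widetilde M}$, while yours is self-contained and pinpoints exactly where the odd-form hypothesis enters --- for $\pi_1\cong\Z_2$, type (iii) can only happen through $w_2(M)=\alpha^2$, which forces an even form. One small caveat: your side remark that evenness of the $\Z_2$-intersection form is \emph{equivalent} to evenness of $Q_M$ is false in general (the Enriques surface has even $Q_M$ but $w_2\neq 0$ --- it is precisely a $w_2$-type (iii) manifold); however, you only use the true direction, that $Q_M$ odd implies the $\Z_2$-form is odd, so the proof is unaffected.
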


\begin{proof}
	By \cite[p.~57]{HambletonKreck1988}, $M$ is stably equivalent to $\Sigma \# Z'$, where $\Sigma$ is a rational homology sphere, and $Z'$ is simply--connected. So there is a homeomorphism $M\# n (S^2\times S^2) \approx \Sigma \# Z$ for $Z=Z'\# n(S^2 \times S^2)$. Because $M$ is odd, $Z$ also must be odd. Take the double cover of both sides to obtain $\widetilde{M} \# 2n (S^2 \times S^2) \approx Z \# Z \# \widetilde{\Sigma}$. Then $\widetilde{M}$ is odd, and hence non-spin. So $M$ must be of $w_2$-type (i).
\end{proof}

This implies the following neat classification result which will be used in the proof of our main theorem after each construction.

\begin{proposition}
	Let $M$ be a closed, smooth, oriented $4$--manifold. If $\pi_1(M)\cong \Z_2$, $b_2^+(M)=a$, $b_2^-(M)=b$, and $Q_M$ is odd, then $M$ is homeomorphic to $R_{a,b}$.
	\label{prop:Rab_topological_type}
\end{proposition}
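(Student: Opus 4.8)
The plan is to invoke the Hambleton--Kreck classification theorem directly, reducing the problem to checking that the three homeomorphism invariants (Kirby--Siebenmann invariant, intersection form, and $w_2$-type) of $M$ agree with those of $R_{a,b}$. Since homeomorphism type is completely determined by this triple for closed oriented $4$--manifolds with $\pi_1 \cong \Z_2$, matching all three will suffice.

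First I would record that both $M$ and $R_{a,b}$ admit smooth structures—$M$ by hypothesis, and $R_{a,b} = L_2 \# \CPsum{a}{b}$ by construction—so both have vanishing Kirby--Siebenmann invariant. Second, I would compare intersection forms. By the preceding discussion, $Q_M$ is determined by its rank, signature, and parity. We are given that $Q_M$ is odd with $b_2^+(M) = a$ and $b_2^-(M) = b$, so its rank is $a+b$ and its signature is $a - b$. Since $L_2$ is a rational homology sphere, it contributes nothing to the intersection form, and $Q_{R_{a,b}} = a\langle 1 \rangle \oplus b \langle -1 \rangle$ is odd with the same rank $a+b$ and signature $a-b$. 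Hence $Q_M \cong Q_{R_{a,b}}$. Third, I would address the $w_2$-type: since $Q_M$ is odd and $\pi_1(M) \cong \Z_2$, the preceding proposition (Proposition 1) gives that $M$ has $w_2$-type (i). The same proposition applied to $R_{a,b}$—which has odd intersection form and order two fundamental group—shows $R_{a,b}$ also has $w_2$-type (i).

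With all three invariants matching, the classification theorem \cite[Theorem C]{HambletonKreck} yields an orientation-preserving homeomorphism $M \approx R_{a,b}$. I expect the only genuinely substantive point to be the $w_2$-type determination, and this has already been handled by the previous proposition; the remaining verifications are immediate from the given hypotheses and the definition of $R_{a,b}$. In short, the statement is essentially a packaging of the Hambleton--Kreck classification together with Proposition 1, and the proof amounts to checking that the hypotheses feed correctly into that classification.
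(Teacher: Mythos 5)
Your proposal is correct and follows exactly the route the paper intends: the paper states this proposition without a written proof, noting only that it follows from the preceding $w_2$-type proposition together with the Hambleton--Kreck classification \cite[Theorem C]{HambletonKreck}, which is precisely the argument you spell out (vanishing Kirby--Siebenmann from smoothability, matching rank/signature/parity of the odd intersection forms, and $w_2$-type (i) for both $M$ and $R_{a,b}$ via Proposition 1). Nothing is missing; you have simply made the paper's implicit reasoning explicit.
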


We prefer to fill the geography using $(b_2^+, b_2^-)$ coordinates. That said, one can also express the geography problem in terms of the algebraic invariants $(\sigma, e)$ or $(c_1^2,\chi_h)$. Due to the nature of the constructions, and the fact that we rely on other sources that use different coordinates, we will often need to go back and forth between coordinates. Hence, the following conversion formulas will be handy throughout this paper.

\begin{proposition}
If $X$ is a closed 4-manifold having Betti numbers $b_2^+, b_2^-$, and $b_1=0$, Euler characteristic $e$, signature $\sigma$, Chern number $c_1^2$, and holomorphic Euler characteristic $\chi_h$, then 
\begin{align*}
	b_2^+&=\frac{e-2+ \sigma}{2} = 2\chi_h-1 \\
	b_2^-&=\frac{e-2-\sigma}{2} = 10\chi_h -c_1^2-1 \\
	c_1^2&=2e+3\sigma = 4 + 5b_2^+ - b_2^- \\
	\chi_h&=\frac{1}{4}(e+\sigma) = \frac{1}{2} \left( 1+b_2^+ \right) \\
	e&=12\chi_h-c_1^2 = 2 + b_2^+ + b_2^-\\
	\sigma&=c_1^2-8\chi_h = b_2^+ - b_2^-.\\
\end{align*}

\end{proposition}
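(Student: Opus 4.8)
The plan is to reduce everything to two topological relations and two definitional ones, after which each listed equality is a one-line substitution. First I would record the inputs coming from the topology of a closed, oriented $4$--manifold $X$ with $b_1=0$. Poincaré duality gives $b_0=b_4=1$ and $b_1=b_3=0$, so the Euler characteristic, as the alternating sum of Betti numbers, is $e=2+b_2$ with $b_2=b_2^++b_2^-$. By definition the signature of the intersection form is $\sigma=b_2^+-b_2^-$. These two identities, namely
\[
e = 2 + b_2^+ + b_2^-, \qquad \sigma = b_2^+ - b_2^-,
\]
are the only genuinely geometric facts needed.

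Next I would treat the remaining two invariants, $\chi_h$ and $c_1^2$, as \emph{defined} by the formulas $\chi_h=\tfrac14(e+\sigma)$ and $c_1^2=2e+3\sigma$. Since $X$ need not carry an almost-complex structure, these numerical expressions are the operative definitions; they agree with the Todd genus and with $c_1^2=2\chi+3\sigma$ whenever such a structure exists. With this understanding, the content of the proposition is simply the internal consistency of the six displayed lines.

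Then I would verify the equalities by elimination and back-substitution. Solving the two topological relations for $b_2^\pm$ gives $b_2^+=\tfrac12(e-2+\sigma)$ and $b_2^-=\tfrac12(e-2-\sigma)$, which are the first expressions in lines one and two. Substituting these into the definitions yields $\chi_h=\tfrac14\bigl((2+b_2^++b_2^-)+(b_2^+-b_2^-)\bigr)=\tfrac12(1+b_2^+)$ and $c_1^2=2(2+b_2^++b_2^-)+3(b_2^+-b_2^-)=4+5b_2^+-b_2^-$, giving the $(b_2^+,b_2^-)$ expressions in lines three and four. Rearranging $\chi_h=\tfrac12(1+b_2^+)$ gives $b_2^+=2\chi_h-1$, and combining this with $c_1^2=4+5b_2^+-b_2^-$ gives $b_2^-=10\chi_h-c_1^2-1$. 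Finally, inverting the linear system $\{\chi_h=\tfrac14(e+\sigma),\ c_1^2=2e+3\sigma\}$ for $(e,\sigma)$ produces $e=12\chi_h-c_1^2$ and $\sigma=c_1^2-8\chi_h$.

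There is no real obstacle here: the argument is entirely linear bookkeeping. The only point requiring any care is keeping straight which formulas serve as the definitions of $\chi_h$ and $c_1^2$ and which are being derived from them, so that the chain of substitutions is not circular.
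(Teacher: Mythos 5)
Your proposal is correct, and it is exactly the routine derivation the paper has in mind: the paper states these conversion formulas without proof, treating them as standard bookkeeping, and your argument (Poincar\'e duality plus $b_1=b_3=0$ giving $e=2+b_2^++b_2^-$ and $\sigma=b_2^+-b_2^-$, with $\chi_h=\tfrac14(e+\sigma)$ and $c_1^2=2e+3\sigma$ taken as the operative definitions, followed by linear substitution) fills in precisely that omitted verification. Your remark that $\chi_h$ and $c_1^2$ must be read as numerical definitions rather than consequences of an almost-complex structure is also the right caveat, since the manifolds in the paper are not assumed to carry one.
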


\subsection{Lefschetz fibrations and surgeries}

We recall the definition of a Lefschetz fibration.

\begin{definition}
	Let $X$ and $\Sigma$ be an oriented 4-manifold and surface, respectively. A smooth surjection $f\colon X\to \Sigma$ is called a Lefschetz fibration if it has finitely many critical points, and around each critical point there are orientation--preserving charts where $f$ is locally modelled by the map $g\colon\mathbb{C}^2\to \mathbb{C}$ defined as $g(z_1,z_2)=z_1z_2$. The genus of $f$ refers to the genus of a regular fiber.
\end{definition}

Given a Lefschetz fibration $f\colon X\to \Sigma$, there's a natural way to equip $X$ with a symplectic form such that fibers and sections are symplectic. There are a few notions of minimality relating to symplectic $4$--manifolds and Lefschetz fibrations that are important to distinguish. A smooth $4$--manifold $X$ is \textit{minimal} if it does not contain an smooth $(-1)$--spheres, i.e. an embedded sphere with self-intersection $(-1)$. If $S\subset X$ is an embedded surface, we say that the pair $(M,S)$ is {\it relatively minimal}  if $M\backslash \nu(S)$ is minimal, where $\nu(S)$ is a tubular neighborhood of $S$. On the other hand, a Lefschetz fibration is said to be {\it relatively minimal} if none of the fibers contain an embedded $(-1)$--sphere. The notion of relative minimality may seem equivocal, but the intended definition will be clear from context. Note that irreducibility implies minimality, since if $M$ is not minimal, then $M\cong N \# \CPbar{2}$ for some $N$ with the obvious exception when $M\cong \CPbar{2}$. 


Let $(M_1,\omega_1)$ and $(M_2,\omega_2)$ be symplectic $4$--manifolds where each $M_i$ admits an embedded genus $g$ symplectic surface $\Sigma_i$ such that $[\Sigma_i]^2 =0$. The normal bundles of $\Sigma_i$ induce $S^1$-bundles on $\partial \nu(\Sigma_i)$ over $\Sigma_i$. We can obtain a new symplectic manifold by gluing $M_1 \setminus \nu(\Sigma_1)$ and $M_2\setminus \nu(\Sigma_2)$ along the boundaries $\partial \nu(\Sigma_i)$ via an orientation--reversing bundle diffeomorphism. This new manifold is a {\it symplectic fiber sum} of $M_1$ and $M_2$, and its symplectic form resticts to $\omega_i$ on each $M_i\setminus \nu(\Sigma_i)$. We denote it by $M_1\#_{\Sigma_1=\Sigma_2} M_2$, or sometimes $M_1\#_\Sigma M_2$ when the gluing surfaces are clear from context. This notation suppresses the choice of gluing map, which will be specified separately when needed. The symplectic fiber sum of two manifolds along a genus $g$ surface has algebraic invariants given by $e(M_1\#_{\Sigma_g} M_2) = e(M_1)+e(M_2) +2-2g$ and $\sigma(M_1\#_{\Sigma_g} M_2) = \sigma(M_1)+\sigma(M_2)$. The following theorems provide criteria for when a symplectic fiber sum is irreducible. Symplectic fiber sums were introduced in \cite{GompfNew}, where the definition extends to the symplectic submanifolds with non-zero self-intersection, but for our purposes it suffices to take symplectic sums along square zero surfaces. The following theorems demonstrate that we can use symplectic fiber sums to build new irreducible manifolds.

\begin{theorem}[Usher \cite{usher2007minimality}]\label{thm:usher}
	Let\/ $X=Y\#_{\Sigma=\Sigma'}Y'$ be the symplectic sum, where the genus $g$ of $\Sigma$ and\/ $\Sigma'$ is positive.
	\begin{itemize}
		\item[(i)]  If either\/ $Y\setminus\Sigma$ or $Y'\setminus\Sigma'$ contains an embedded symplectic sphere of square $-1$, then $X$ is not minimal.
		\item[(ii)]  If one of the summands, say $Y$ for definiteness, admits the structure of an $S^2$-bundle over a surface of genus $g$ such that $\Sigma$ is a section of this $S^2$-bundle, then $X$ is minimal if and only if $Y'$ is minimal.
		\item[(iii)] In all other cases, $X$ is minimal.
	\end{itemize}
\end{theorem}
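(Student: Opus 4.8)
The plan is to reduce minimality to a statement about Gromov invariants and then use the degeneration defining the symplectic sum to control which exceptional classes can survive. Recall (Taubes' $SW=Gr$, together with the work of Li--Liu) that a symplectic $4$--manifold with $b_2^+>1$ fails to be minimal precisely when there is a class $E$ with $E^2 = K\cdot E = -1$ and nonzero Gromov invariant $\operatorname{Gr}(E)$; such a class is then represented by an embedded symplectic $(-1)$--sphere. So I would first restate each of (i)--(iii) as assertions about the vanishing or non-vanishing of $\operatorname{Gr}_X(E)$ over all exceptional classes $E$, dealing with any $b_2^+=1$ summand by the usual separate wall-crossing argument. The upshot is that proving minimality of $X$ amounts to showing that no exceptional class of $X$ carries a nonzero Gromov invariant.

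The central tool is the symplectic sum (degeneration) formula of Ionel--Parker and Li--Ruan, which expresses $\operatorname{Gr}_X(E)$ as a sum over matched pairs of relative Gromov invariants of $(Y,\Sigma)$ and $(Y',\Sigma')$. Equivalently --- and this is the more geometric form I would actually use --- I would stretch the almost complex structure along the neck $\partial\nu(\Sigma)$ and apply Gromov compactness: an embedded $J$--holomorphic $(-1)$--sphere in $X$ must limit to a broken configuration whose pieces are relative curves $C\subset Y\setminus\Sigma$ and $C'\subset Y'\setminus\Sigma'$ meeting the summing locus with matching multiplicity $d=C\cdot\Sigma=C'\cdot\Sigma'$. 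The argument then splits according to the value of $d$.

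When $d=0$ the limiting curve lies entirely on one side, producing an embedded symplectic $(-1)$--sphere in $Y\setminus\Sigma$ or $Y'\setminus\Sigma'$; this is exactly the content of (i), and conversely such a sphere persists into $X$, so (i) is an equivalence. The work is all in the case $d>0$, where the exceptional sphere genuinely crosses $\Sigma$. Here I would run an index and adjunction count: the components have total genus $0$, the matching conditions along $\Sigma$ consume positivity, and the contributing relative invariants must have the correct virtual dimension. The claim of (iii) is that these constraints are incompatible with $\operatorname{Gr}_X(E)\neq 0$ in general, so no new exceptional spheres are created and $X$ is minimal. The sole exception, case (ii), is when a summand, say $Y$, is an $S^2$--bundle over the genus $g$ base with $\Sigma$ a section: then the fiber class $F$ is a square--zero sphere with $F\cdot\Sigma=1$, and a $Y'$--exceptional sphere meeting $\Sigma'$ once can be capped off by such a fiber to yield an exceptional sphere of $X$. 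I would show these fiber--glued curves account for all exceptional classes with $d>0$, giving the equivalence ``$X$ minimal $\iff Y'$ minimal.''

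The main obstacle is precisely the $d>0$ analysis in (iii): ruling out every way in which relative curves on the two sides could glue to an exceptional sphere. This is delicate because the relative moduli spaces can be nonempty, and one must combine positivity of intersection with $\Sigma$, the genus constraint, and careful bookkeeping of the canonical class across the sum (so that $K_X\cdot E=-1$ pins the contributing relative classes into a narrow range) in order to force vanishing. Isolating the $S^2$--bundle geometry as the \emph{unique} configuration in which this bookkeeping breaks down --- and confirming that it breaks down in exactly the manner recorded in (ii) --- is the technical heart of the argument.
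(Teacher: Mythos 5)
The first thing to note is that the paper you are comparing against does not prove this statement at all: it is Usher's theorem, quoted verbatim from \cite{usher2007minimality} and used throughout as a black box. So the only meaningful benchmark is Usher's published argument, and your outline does reconstruct its skeleton correctly: translate non-minimality into nonvanishing of Gromov--Taubes invariants on exceptional classes (Taubes' $\mathrm{SW}=\mathrm{Gr}$, with Li--Liu and McDuff handling $b_2^+=1$), degenerate $X$ into $Y\cup_\Sigma Y'$ so that a putative exceptional sphere limits onto relative curves meeting the gluing surface with matched contact multiplicity $d$, and then do a case analysis in which $d=0$ yields (i) and the $S^2$-bundle-with-section geometry is isolated as the only $d>0$ mechanism for producing exceptional classes, yielding (ii) and (iii).

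The genuine gap is the one you name yourself: the $d>0$ analysis is deferred as ``the technical heart,'' but that analysis \emph{is} the theorem --- case (i) is nearly immediate (a symplectic $(-1)$-sphere in $Y\setminus\Sigma$ survives into $X$ since $Y\setminus\nu(\Sigma)$ embeds in $X$), and all of the actual content of (ii) and (iii) lives in the step you postpone. Two places where your sketch is concretely too optimistic. First, the limit configuration under neck-stretching is not simply a matched pair of relative curves in $Y\setminus\Sigma$ and $Y'\setminus\Sigma'$: Gromov compactness also allows components mapped into the intermediate ruled piece $\mathbb{P}(N_\Sigma\oplus\mathbb{C})$, components lying inside $\Sigma$ itself, and multiply covered components, and taming these possibilities (not the clean matched-pair picture) is where most of the labor in Usher's paper and in the Ionel--Parker formalism is spent; in particular, the claim in your case (ii) that fiber-capped curves account for \emph{all} exceptional classes with $d>0$ requires exactly this control. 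Second, your argument never invokes the standing hypothesis $g>0$, yet the theorem is stated only for positive genus and Usher's positivity and dimension estimates use it essentially; a sketch in which that hypothesis plays no visible role cannot yet be a complete proof. In short: you have written down the correct strategy --- essentially Usher's --- but the decisive estimates are asserted rather than performed.
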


\begin{theorem}[Hamilton and Kotschick] \cite{Hamilton_2006}\label{thm:HK_minimal}
	Minimal symplectic\/ $4$-manifolds with residually finite fundamental groups are irreducible.
\end{theorem}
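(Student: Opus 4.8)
The proof is Seiberg--Witten theoretic, and rests on two inputs from gauge theory together with Taubes' results on symplectic manifolds. First, by Taubes, a closed symplectic $4$--manifold $X$ has $b_2^+\geq 1$, and when $b_2^+(X)>1$ the canonical $\mathrm{spin}^c$ structure satisfies $SW_X=\pm 1$ (the case $b_2^+=1$ requires the chamber determined by the symplectic form). Second, the connected sum vanishing theorem: if $X=A\# B$ with $b_2^+(A)\geq 1$ and $b_2^+(B)\geq 1$, then every Seiberg--Witten invariant of $X$ is zero. The plan is to suppose that $X$ is minimal symplectic with residually finite $\pi_1$ and that $X=A\# B$ is a connected sum, and to show that one summand is a homotopy $4$--sphere. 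Since $b_2^+(X)=b_2^+(A)+b_2^+(B)\geq 1$, after relabeling we may assume $b_2^+(A)\geq 1$; the two inputs then force $b_2^+(B)=0$, for otherwise $SW_X$ would both vanish and be nonzero.

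The crux is to upgrade $b_2^+(B)=0$ to the statement that $B$ is a homotopy sphere, and this is exactly where residual finiteness enters. I would first show $\pi_1(B)=1$. By van Kampen $\pi_1(X)\cong\pi_1(A)*\pi_1(B)$, and as a free factor of the residually finite group $\pi_1(X)$, the group $\pi_1(B)$ is itself residually finite. If $\pi_1(B)\neq 1$, choose a nontrivial finite quotient $\phi\colon\pi_1(X)\to Q$ that is trivial on $\pi_1(A)$ and nontrivial on $\pi_1(B)$, and let $\hat X\to X$ be the associated cover of degree $d=|Q|\geq 2$. A direct analysis of this cover (viewing $X$ as $A_0\cup_{S^3}B_0$) shows that, because $\phi$ is trivial on $\pi_1(A)$, the preimage of $A_0$ is $d$ disjoint copies of $A_0$ while the preimage of $B_0$ is connected; hence $\hat X\cong \hat B\,\#\,\underbrace{A\#\cdots\# A}_{d}$, where $\hat B$ is the connected $d$--fold cover of $B$. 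Now $\hat X$ is symplectic (pull back the form) with $b_2^+(\hat X)\geq d\,b_2^+(A)\geq 2$, so $SW_{\hat X}\neq 0$ by Taubes; but $\hat X$ splits as a single copy of $A$ summed with $\hat B\#(d-1)A$, two pieces each of positive $b_2^+$, so the connected sum vanishing theorem gives $SW_{\hat X}=0$, a contradiction. Therefore $\pi_1(B)=1$.

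It remains to treat a simply connected summand $B$ with $b_2^+(B)=0$. If $b_2(B)=0$ then $B$ is a homotopy $4$--sphere and we are done. Otherwise $B$ is negative definite with $b_2^-(B)=n\geq 1$, so by Donaldson's theorem $Q_B\cong n\langle -1\rangle$ and $B$ is homeomorphic to $\#^n\overline{\mathbb{CP}^2}$. In this case the standard classes $E_1,\dots,E_n\in H_2(B)\subset H_2(X)$ have square $-1$ and pair oddly, hence nontrivially, with the canonical class $K_X$; by Taubes' identification of Seiberg--Witten theory with Gromov theory, such a configuration is realized by an embedded symplectic $(-1)$--sphere in $X$, contradicting minimality. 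Hence $b_2(B)=0$, so $B$ is a homotopy sphere, and $X$ is irreducible.

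The main obstacle is the covering argument of the second paragraph: one must verify that the finite cover is simultaneously symplectic with nonvanishing Seiberg--Witten invariant and genuinely splits off repeated $A$--summands, so that the two gauge-theoretic inputs collide. The remaining delicate point is the final step, where $B$ is only known to be \emph{homeomorphic} to $\#^n\overline{\mathbb{CP}^2}$; ruling out an exotic negative definite summand rests on Taubes' equivalence between smooth and symplectic minimality for symplectic $4$--manifolds with nontrivial Seiberg--Witten invariants, rather than on the homeomorphism type alone. Finally, the $b_2^+=1$ case must be handled throughout with the chamber determined by the symplectic form, and one checks that passing to the covers above (where $b_2^+>1$) removes this subtlety.
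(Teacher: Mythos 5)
This statement is never proved in the paper: Theorem \ref{thm:HK_minimal} is imported verbatim from \cite{Hamilton_2006} and used as a black box (in tandem with Usher's theorem) to certify irreducibility of the constructions, so there is no in-paper argument to compare you against. Measured against Hamilton and Kotschick's original proof, your proposal follows essentially the same route, and its core is sound: Taubes' nonvanishing together with the connected-sum vanishing theorem force a summand $B$ with $b_2^+(B)=0$; residual finiteness enters exactly where and how you use it, via a finite quotient of $\pi_1(X)\cong\pi_1(A)*\pi_1(B)$ that kills $\pi_1(A)$ but not $\pi_1(B)$, whose associated cover splits as $\hat B\# A\#\cdots\# A$ and makes Taubes' nonvanishing collide with connected-sum vanishing (correctly, this step uses no minimality at all); and Donaldson's theorem reduces the remaining case to a simply connected negative definite summand. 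Your covering paragraph is correct as written.

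Two points, however, are genuinely incomplete, and they are the same two points where the original paper has to work hardest. First, in the definite-summand step, ``$E_i$ pairs oddly with $K_X$, so $\mathrm{SW}=\mathrm{Gr}$ gives a symplectic $(-1)$-sphere'' is not yet an argument: Taubes' theorem applies to a class $E$ only after you know that the twisted spin$^c$ structure with $c_1=\pm(K_X-2E)$ has nonvanishing Seiberg--Witten invariant. Since $B$ is merely \emph{homeomorphic} to $\#^n\overline{\mathbb{CP}^2}$, no smooth $(-1)$-sphere is available, so the ``smooth minimality equals symplectic minimality'' statement you fall back on cannot be invoked; what is actually needed is the gluing (generalized blow-up) formula for sums along $S^3$ with negative definite, simply connected summands, which shows that flipping signs of the $E_i$-components of $K_X$ again yields basic classes, and only then do $\mathrm{SW}\Rightarrow\mathrm{Gr}$ and adjunction produce a symplectic $(-1)$-sphere contradicting minimality. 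Second, your claim that the $b_2^+=1$ chamber issues are removed ``by passing to the covers above'' fails exactly where it is needed: in the final step one has $\pi_1(B)=1$, so the available covers unwind $A$ and replicate $B$, and a $(-1)$-sphere found in such a cover contradicts only minimality of the \emph{cover}, which you do not know; worse, if $\pi_1(X)=1$ there are no covers at all, and minimal simply connected symplectic manifolds with $b_2^+=1$ are precisely the kind of manifold this paper builds. Hamilton and Kotschick handle $b_2^+=1$ with Li--Liu's wall-crossing and chamber results (plus the known irreducibility of minimal rational and ruled surfaces), not with coverings.
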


Finite groups are residually finite, so the fundamental group condition in Theorem \ref{thm:HK_minimal} is always met in this paper. Hence all minimal symplectic manifolds discussed here are irreducible. Next we describe a type of $4$--manifold particularly well-suited for symplectic fiber summing. The following family of symplectic manifolds were used extensively in \cite{ABBKP}.

\begin{definition}
	An ordered triple $(X,T_1,T_2)$ is a {\it telescoping triple} if $X$ is a symplectic $4$--manifold with disjoint embedded Lagrangian tori $T_1$ and $T_2$ such that the following conditions are met:\begin{enumerate}[(1)]
		\item $T_1$ and $T_2$ span a $2$-dimensional subspace of $H_2(X;\R)$.
		
		\item $\pi_1(X)\cong \Z^2$ and the meridian of each $T_i$ is trivial in $\pi_1(X - (T_1\cup T_2))$.
		
		\item The image of the homomorphism induced by inclusion $\pi_1(T_1)\to \pi_1(X)$ is a summand $\Z\subset \pi_1(X)$.
		
		\item The homomorphism induced by inclusion $\pi_1(T_2)\to \pi_1(X)$ is an isomorphism.
	\end{enumerate}
\end{definition}

The telescoping triple above is called minimal whenever $X$ is minimal. Because $T_1$ and $T_2$ above are homologically essential, they become symplectic after perturbing the symplectic form on $X$. If $(X,T_1,T_2)$ and $(X',T_1',T_2')$ are two telescoping triples, then for an appropriate gluing map the symplectic fiber sum $X\#_{T_2=T_1'} X'$ is again a telescoping triple by \cite[Proposition 3]{ABBKP}. Moreover, Usher's theorem shows that if $X$ and $X'$ are both minimal, so is the new telescoping triple $(X\#_{T_2=T_1'} X', T_1, T_2')$. In this way, telescoping triples are building blocks for constructing irreducible $4$--manifolds.

While symplectic fiber sums are effective for obtaining irreducible $4$--manifolds with various Euler characteristics and signatures, we introduce another operation which preserves these invariants but changes $\pi_1$. Let $X$ be a smooth $4$--manifold and $T$ an embedded torus with framing $\eta\colon \nu T\to T^2\times D^2$. Let $\lambda$ be the image of a push-off of a primitive curve on $T$ under $\eta$, and let $\mu_T$ be a meridian of $T$, i.e. a fiber of $\partial \nu(T)$. Then let $\phi\colon T^2\times S^1 \to \partial \nu(T)$ be a diffeomorphism which induces $[\{pt\}\times S^1]\mapsto p[\mu_T]+q[\lambda]$ on the first homology. The manifold $X'$ given by \[
	X' = X\setminus \nu(T) \cup_\phi T^2\times D^2,
	\] 
is the result of {\it $p/q$-torus surgery} performed on $X$ along $T$ with respect to $\lambda$. From Seifert-Van Kampen, we see that $\pi_1(X')$ is obtained from $\pi_1(X-T)$ by quotienting out the subgroup normally generated by $\mu_T^p\lambda_\eta^q$. The 4-manifold obtained from this construction admits symplectic structure whenever $X$ is symplectic, $T$ is Lagrangian, and $p=1$. In this case, the surgery is called {\it Luttinger surgery with coefficient $1/q$.}  When performing Luttinger surgery to change the fundamental group, we will make use of the following fact.

\begin{proposition}[\cite{Tian}, Proposition 3.1]
	Luttinger surgery preserves minimality.
	\label{prop:Luttinger_minimal}
\end{proposition}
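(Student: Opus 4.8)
The plan is to prove the contrapositive using Seiberg--Witten theory: assuming the surgered manifold $X'$ is non-minimal, I would show that the original symplectic manifold $X$ is non-minimal as well. Throughout, write $W = X \setminus \nu(T) = X' \setminus \nu(T')$ for the common complement of the surgery torus, and note that since $T$ is Lagrangian it has genus one and square zero, so the adjunction relation $2g-2 = K_X\cdot[T] + [T]^2$ forces $K_X\cdot[T]=0$; the same holds for the (again Lagrangian) core torus $T'\subset X'$. I will run the argument for $b_2^+>1$, where Taubes' theory applies cleanly, and remark that the $b_2^+=1$ case is identical after replacing the Seiberg--Witten invariant by its small-perturbation version in Taubes' chamber.

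First I would reduce the geometric notion of minimality to a Seiberg--Witten statement. For a symplectic $4$--manifold, smooth and symplectic minimality coincide, so non-minimality of $X'$ yields an embedded symplectic $(-1)$--sphere, hence by Taubes a class $E$ with $E^2=-1$ and $K_{X'}\cdot E = -1$. Applying the symplectic blow-up formula to the minimal model $X' = Z \# \overline{\mathbb{CP}^2}$, both $K_{X'}$ and $K_{X'}-2E$ are Seiberg--Witten basic classes of $X'$, and Taubes' theorem gives $SW_{X'}(K_{X'})=\pm 1$. The goal is then to transport these basic classes across the surgery to exhibit, on $X$, two basic classes differing by $2E$ with $E^2=-1$; by the same blow-up criterion this certifies that $X$ is non-minimal.

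The transport step is where I would invoke a Seiberg--Witten gluing/surgery formula. Because the surgery is supported in $\nu(T)\cong T^2\times D^2$ and glued along the $3$--torus $\partial\nu(T)$, the relative invariants of $W$ determine those of $X$ and $X'$, and for $\text{spin}^c$ structures whose first Chern class pairs trivially with $[T]$ the two invariants can be matched. The canonical classes $K_X$ and $K_{X'}$ land in exactly this regime, precisely because $T$ and $T'$ are Lagrangian so that $K\cdot[T]=0$, which is what lets Taubes' nonvanishing pass between the two sides.

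The main obstacle is controlling the exceptional class $E$ under this transport: a priori $E\cdot[T']$ need not vanish, so $K_{X'}-2E$ may fail to lie in the $[T']$--trivial regime where the matching applies verbatim. I would address this by showing the symplectic $(-1)$--sphere can be isotoped off the Lagrangian torus $T'$, so that $E\cdot[T']=0$ and $E$ is represented entirely in the common complement $W$; failing a clean disjunction, I would instead appeal to the Morgan--Mrowka--Szab\'o-type surgery relation among the invariants of the $0$--, $1/q$--, and $1/(q+1)$--surgered manifolds to track the exceptional basic class through the family of Luttinger surgeries. Once $E$ is realized in $W$, the matching of basic classes is immediate and the contrapositive closes. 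I expect this disjunction/tracking step, rather than the Taubes input, to carry essentially all of the technical weight.
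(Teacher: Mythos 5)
First, a structural point: the paper does not prove this proposition at all. It is imported as a black box, cited as \cite{Tian}, Proposition 3.1 (Ho--Li), and every later use of it in the paper treats it as external input. So the only meaningful comparison is with the cited proof, which is indeed Seiberg--Witten-theoretic and broadly in the family you describe (Taubes' nonvanishing, the blow-up formula, and surgery formulas along $T^3$). Your outline is therefore pointed in a reasonable direction, but as written it is a plan rather than a proof, and the steps you leave open are exactly where all the content of the cited result lives.

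The concrete gaps are these. (i) Your primary transport mechanism --- that for $\text{spin}^c$ structures pairing trivially with $[T]$ the invariants of $X$ and $X'$ ``can be matched'' --- is not a theorem. The honest tool, which you relegate to a fallback, is the Morgan--Mrowka--Szab\'o-type surgery formula, and it does not match invariants: it expresses $SW_{X'}$ as $SW_X$ plus $q$ times a sum of contributions from the $0$-surgered manifold $X_0$ (with a further subtlety when $[T]$ is nullhomologous, where the essential-torus hypothesis of that formula fails and Taubes' refinement is needed). Controlling those correction terms is the actual work, and your sketch contains no argument for it. (ii) The disjunction step --- isotoping the symplectic $(-1)$-sphere off the Lagrangian torus so that $E$ lives in the common complement $W$ --- is asserted, not proved; positivity of intersections does not apply against a Lagrangian (non-holomorphic) torus, and you yourself concede this step ``carries essentially all of the technical weight,'' which is an accurate description of a missing proof. (iii) The claim that $b_2^+=1$ is ``identical'' after passing to Taubes-chamber invariants is false as stated: wall-crossing makes the gluing theory genuinely harder there, and, worse, Luttinger surgery need not preserve $b_2^+$ at all. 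It preserves $e$ and $\sigma$ but can change $b_1$ --- that is precisely how this paper uses it, to kill fundamental groups --- and then $b_2^{\pm}$ change with it. So $X$ and $X'$ need not lie on the same side of your $b_2^+>1$ dichotomy, and an argument mixing a chamber-dependent invariant on one side of the surgery with an ordinary one on the other has to be made, not waved at. A minor but symptomatic slip: the adjunction equality you quote holds for \emph{symplectic} surfaces; for a Lagrangian torus the conclusion $K_X\cdot[T]=0$ is true, but it comes from the Lagrangian neighborhood identification $TX|_T\cong TT\otimes\mathbb{C}$ (or from perturbing an essential Lagrangian torus to a symplectic one), not from the adjunction formula applied to $T$ itself.
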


As a simple motivating example, Proposition \ref{prop:Luttinger_minimal} shows that if $(X,T_1,T_2)$ is a minimal telescoping triple, then an irreducible simply-connected manifold is obtained through $\pm 1$--Luttinger surgeries along $T_1$ and $T_2$. Throughout our constructions, we will use a combination of symplectic fiber sums and Luttinger surgeries. Proposition \ref{prop:Luttinger_minimal} and Usher's theorem will show that the resulting $4$--manifold is irreducible.

\subsection{The $\mathbf{\Z_2}$-construction}\label{sec:Z2}

The following construction is introduced in \cite{baykur2024smooth}, which we review here. Let $Z$ be a closed, oriented, smooth 4-manifold and let $\Sigma$ be an embedded closed genus $g$ surface in $Z$ with a trivial normal bundle. Let $\eta \colon \nu\Sigma\to D^2\times \Sigma_g$ be a framing of the normal bundle for $\Sigma$. Then let $\phi$ be the involutive deck transform of the double cover $\Sigma_g\to \#_{g+1}\mathbb{RP}^2$. The map \[\Phi:= \eta|_{\partial(\nu\Sigma)}^{-1}\circ (\text{id}_{S^1}\times \phi)\circ \eta|_{\partial(\nu\Sigma)}\] is an orientation-reversing free involution on $\nu\Sigma$. The manifold \[\widetilde{X}:=(Z\setminus \nu\Sigma)\cup_{\Phi} (Z\setminus \nu\Sigma)\] is the {\it double} of $Z$ along $\Sigma$. Let $\tau$ be the orientation-preserving free involution on $\widetilde{X}$ that swaps the two copies of $Z\setminus \nu\Sigma$ and restricts to $\Phi$ on $\partial(\nu\Sigma)$. Then $X:=\widetilde{X}/\tau$ is a smooth, oriented, closed 4--manifold which we call the $\Z_2${\it -construction of $Z$ along $\Sigma$.}

\begin{proposition}\label{cor:Z2_minimal}
	Let $g\geq 1$. If $(M,\Sigma_g)$ is a symplectic, relatively minimal pair, then the $\Z_2$-construction of $M$ along $\Sigma_g$ is irreducible.
\end{proposition}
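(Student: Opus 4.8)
The plan is to equip $X$ with a symplectic structure, verify that its fundamental group is residually finite, prove that $X$ is minimal, and then invoke Hamilton--Kotschick (Theorem \ref{thm:HK_minimal}) to conclude irreducibility. The key starting observation is that the double $\widetilde X = (M\setminus\nu\Sigma_g)\cup_\Phi(M\setminus\nu\Sigma_g)$ is, as a smooth manifold, nothing other than a symplectic fiber sum of $M$ with itself along $\Sigma_g$. Indeed, the gluing map $\Phi|_{\partial(\nu\Sigma_g)} = \eta|_{\partial(\nu\Sigma_g)}^{-1}\circ(\mathrm{id}_{S^1}\times\phi)\circ\eta|_{\partial(\nu\Sigma_g)}$ is an orientation--reversing bundle diffeomorphism of the $S^1$--bundle $\partial(\nu\Sigma_g)$, since $\phi$ reverses the orientation of $\Sigma_g$ while fixing the circle direction. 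Hence $\widetilde X \cong M\#_{\Sigma_g}M$ carries a symplectic form, and because the two summands are identical copies of $(M,\omega)$ and the gluing is $\tau$--symmetric, one can arrange this form to be invariant under the swapping involution $\tau$. As $\tau$ is free, the quotient $X=\widetilde X/\tau$ inherits a symplectic form.

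Next I would establish that $\widetilde X$ is minimal by applying Usher's theorem (Theorem \ref{thm:usher}) to $\widetilde X = M\#_{\Sigma_g}M$, which is permitted because $g\geq 1$. The hypothesis that $(M,\Sigma_g)$ is relatively minimal says precisely that $M\setminus\nu(\Sigma_g)$ contains no embedded $(-1)$--sphere, so neither summand contributes a symplectic $(-1)$--sphere in the complement of the summing surface; this rules out case (i). In case (ii), where $M$ is an $S^2$--bundle over $\Sigma_g$ with $\Sigma_g$ a section, Usher's theorem yields that $\widetilde X$ is minimal if and only if $M$ is, and since every $S^2$--bundle over a base of genus $g\geq 1$ is a minimal ruled surface, $\widetilde X$ is minimal in this case as well. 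In the remaining case (iii), minimality of $\widetilde X$ is immediate. Thus $\widetilde X$ is minimal.

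To descend minimality to $X$, I would exploit the double cover $\pi\colon\widetilde X\to X$ induced by $\tau$. If $X$ were not minimal it would contain a smoothly embedded $(-1)$--sphere $S$; since $S\cong S^2$ is simply connected, $\pi^{-1}(S)$ is a disjoint union of two spheres, each carried diffeomorphically onto $S$ by $\pi$ and therefore each a smooth $(-1)$--sphere in $\widetilde X$. This contradicts the minimality of $\widetilde X$, so $X$ is minimal. Finally, $\pi_1(X)$ fits into a short exact sequence $1\to\pi_1(\widetilde X)\to\pi_1(X)\to\Z_2\to 1$; in every setting where we invoke this proposition $\pi_1(X)$ is finite, hence residually finite, so the hypotheses of Theorem \ref{thm:HK_minimal} are met and $X$ is irreducible.

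I expect the main obstacle to be the first paragraph: identifying the double $\widetilde X$ with the symplectic self-sum $M\#_{\Sigma_g}M$ and producing a genuinely $\tau$--invariant symplectic form, i.e. checking that the $\Z_2$--construction is compatible with the symplectic bookkeeping of the fiber sum. Once $X$ is known to be symplectic and $\widetilde X$ is recognized as a symplectic sum, the minimality of $\widetilde X$ via Usher, the descent of minimality through the double cover, and the final application of Hamilton--Kotschick are essentially formal.
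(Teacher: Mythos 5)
Your second and third paragraphs track the paper's proof closely: the application of Usher's theorem with cases (i)--(iii), including the fact that an $S^2$--bundle over a positive-genus surface is minimal (which the paper proves via a $\pi_2$ argument and you cite as known), and the observation that $(-1)$--spheres lift to the double cover. The genuine gap is the claim in your first paragraph, on which your conclusion leans: the quotient $X=\widetilde X/\tau$ does \emph{not} inherit a symplectic form, and no choice of form can repair this. With respect to the natural fiber-sum form, $\tau$ is \emph{anti}-symplectic rather than symplectic: the gluing $\mathrm{id}_{S^1}\times\phi$ covers the orientation-reversing map $\phi$ of $\Sigma_g$ while preserving the circle direction (the opposite of Gompf's convention), so $\widetilde X$ is naturally the Gompf sum of $(M,\omega)$ with $(M,-\omega)$, and the swap $\tau$ pulls the form back to its negative. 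More decisively, in the paper's principal applications of this proposition the quotient has $\pi_1(X)\cong\Z_2$, hence $b_1(X)=0$, and $b_2^+(X)$ \emph{even} (e.g.\ the copies of $R_{2\chi,10\chi-c}$ in Section 4); a closed oriented $4$--manifold with $b_1=0$ and even $b_2^+$ has non-integral $\chi_h=\frac{1}{2}(1+b_2^+)$, so it admits no almost complex structure and in particular no symplectic structure. Reaching even $b_2^+$ is the whole point of the $\Z_2$--construction, so your symplectic claim fails exactly where the proposition is needed, and with it your final step: Theorem \ref{thm:HK_minimal} cannot be applied to $X$ itself. Descending minimality to $X$ is correct but does not help, since minimality of a non-symplectic manifold does not imply irreducibility.

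The repair is to run Hamilton--Kotschick upstairs and descend \emph{irreducibility} rather than minimality, which is what the paper's one-line reduction (``it suffices to show that $\widetilde X$ is minimal'') amounts to. Your Usher argument shows $\widetilde X$ is minimal symplectic; since $\pi_1(\widetilde X)$ is residually finite in all uses (indeed trivial there), Theorem \ref{thm:HK_minimal} makes $\widetilde X$ irreducible. Now suppose $X=A\#B$ with neither summand a homotopy sphere. The separating $S^3$ lifts to the double cover and decomposes $\widetilde X$: if the fundamental group of one summand, say $A$, lies in the index-two subgroup, then $\widetilde X\cong A\#\widetilde B\#A$, which is a nontrivial decomposition since $A$ is not a homotopy sphere; otherwise $\widetilde X\cong \widetilde A\#\widetilde B\#(S^1\times S^3)$, which is nontrivial because $S^1\times S^3$ is not a homotopy sphere and $\widetilde A\#\widetilde B$ cannot be one either, as that would force $b_2(\widetilde X)=0$, contradicting that $\widetilde X$ is symplectic. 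Either way $\widetilde X$ fails to be irreducible, a contradiction. This covering-space step replaces both your invariant-form claim and your application of Theorem \ref{thm:HK_minimal} to $X$.
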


\begin{proof}
	Since the $\Z_2$--construction of $M$ along $\Sigma_g$ is double covered by $\widetilde X$, the double of $M$ along $\Sigma_g$, it suffices to show that $\widetilde X$ is minimal. For this, we appeal to Usher's theorem. Case (i) is ruled out because $(M,\Sigma_g)$ is relatively minimal.
	
	Case (ii) is ruled out by the following well-known fact: if  $M$ is an $S^2$-bundle over $\Sigma_g$, then it is minimal. For completeness, we sketch a short  argument. Let $M \cong N \# \CPbar{2}$ and let $\widetilde{M}$ denote the universal cover of $M$. On the one hand, $\pi_2(\widetilde{M})\cong\pi_2(M) \cong \Z$, with the latter isomorphism following from the long exact sequence for the fibration on $M$. On the other hand, $\pi_2(\widetilde{N \# \CPbar{2}}) \cong H_2(\widetilde{N \# \CPbar{2}}) $ by Hurewicz. Since  $\pi_1(N)\cong \pi_1(M) \cong \pi_1(\Sigma_g)$ is infinite, $\widetilde{N \# \CPbar{2}} \cong \widetilde{N} \# \left(\#_{n=1}^{\infty}\CPbar{2}\right)$. Consequently, $H_2(\widetilde{N \# \CPbar{2}})$ is infinitely generated. So  $\pi_2(\widetilde{N \# \CPbar{2}}) $ is also infinitely generated, whereas $\pi_2(\widetilde{M})$ is cyclic.
	
\end{proof}

The following lemma shows how algebraic invariants change when the $\Z_2$--construction is performed. The proof is a matter of simple algebraic computation and is omitted.

\begin{lemma}\label{lemma:inv_computations}
	Let $M$ be a $4$--manifold with $b_1(M)=0$ and let $\Sigma$ an embedded genus $g$ surface. Then taking a double/quotient/both along $\Sigma\subset M$ transforms the invariants of $M$ as follows, respectively:
	\begin{align*}
		\textit{Double} && \textit{Quotient} && \textit{$\Z_2$-construction}\\  
		\sigma \mapsto 2\sigma && \sigma \mapsto \sigma/2 && \sigma \mapsto \sigma \\
		e \mapsto 2e + 4g- 4 && e \mapsto e/2 && e \mapsto e+2g-2\\
		c_1^2 \mapsto 2c_1^2 +8g-8 && c_1^2\mapsto c_1^2/2 &&c_1^2 \mapsto  c_1^2 +4g-4 \\
		\chi_h \mapsto 2\chi_h +g-1 && \chi_h \mapsto \chi_h /2 && \chi_h \mapsto \chi_h + \frac{g-1}{2}\\
		b_{\pm} \mapsto 2b_{\pm} + 2g-1 && b_{\pm} \mapsto \frac{b_{\pm}-1}{2} && b_{\pm} \mapsto b_{\pm} + g-1\\ 
	\end{align*}
\end{lemma}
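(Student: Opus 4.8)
The plan is to reduce the entire table to two genuinely geometric computations — the Euler characteristic and signature of the double $\widetilde X$, and of the quotient $X = \widetilde X/\tau$ — and then to read off every remaining entry from the conversion formulas relating $(c_1^2,\chi_h,b_2^{\pm})$ to $(e,\sigma)$ stated earlier, which are valid whenever $b_1 = 0$. Since those conversions are linear in $e$ and $\sigma$, once I know how $(e,\sigma)$ transforms, the other four rows of each column follow by routine substitution; for example the double sends $c_1^2 = 2e+3\sigma$ to $2(2e+4g-4)+3(2\sigma) = 2c_1^2 + 8g - 8$, and similarly for $\chi_h$ and $b_2^{\pm}$. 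The ``$\Z_2$-construction'' column is then obtained for free by composing the ``double'' transformation with the ``quotient'' transformation; as a sanity check, $e \mapsto 2e+4g-4 \mapsto e+2g-2$ recovers the expected entry.

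For the double, the Euler characteristic is additive: writing $\nu\Sigma \cong \Sigma_g \times D^2$, we have $e(\nu\Sigma) = e(\Sigma_g) = 2-2g$ and $e(\partial\nu\Sigma) = 0$ (an odd-dimensional closed manifold), so $e(M\setminus\nu\Sigma) = e(M) - (2-2g)$ and hence $e(\widetilde X) = 2\,e(M\setminus\nu\Sigma) = 2e(M) + 4g - 4$. For the signature I would use Novikov additivity. Since $\nu\Sigma \cong \Sigma_g\times D^2$ has zero signature, Novikov additivity applied to $M = (M\setminus\nu\Sigma)\cup\nu\Sigma$ gives $\sigma(M\setminus\nu\Sigma) = \sigma(M)$, and applied to the two halves of $\widetilde X$ expresses $\sigma(\widetilde X)$ as the sum of the signatures of the two copies of $M\setminus\nu\Sigma$ taken with their induced orientations. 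The key point — and the one place where care is needed — is that these two contributions add rather than cancel: because the gluing involution $\tau$ is orientation-preserving and exchanges the two halves, both copies inherit the orientation of $M\setminus\nu\Sigma$, so each contributes $+\sigma(M)$ and $\sigma(\widetilde X) = 2\sigma(M)$. This is precisely where the orientation-reversing nature of $\Phi$ on $\partial\nu\Sigma$ is used; it is what distinguishes $\widetilde X$ from an ordinary orientation-reversing double, whose signature would vanish.

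For the quotient I would invoke multiplicativity under the degree-two free cover $p\colon \widetilde X\to X$. Euler characteristic is multiplicative under finite covers, giving $e(X) = e(\widetilde X)/2$. Signature is multiplicative here as well: since $p$ is a local diffeomorphism, $T\widetilde X \cong p^*TX$, so the Hirzebruch $L$-class pulls back and $\sigma(\widetilde X) = \langle p^*L(X),[\widetilde X]\rangle = \langle L(X), p_*[\widetilde X]\rangle = 2\sigma(X)$, whence $\sigma(X) = \sigma(\widetilde X)/2$. Feeding $(e/2,\sigma/2)$ through the conversion formulas yields the ``quotient'' column, the only subtlety being the $-1$ in $b_2^{\pm}\mapsto (b_2^{\pm}-1)/2$, which arises because halving $e$ removes the contribution of $b_0$ and $b_4$ unevenly, leaving a $-2$ that is split between $b_2^+$ and $b_2^-$ using $\sigma\mapsto\sigma/2$. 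I expect the main obstacle to be nothing computational but rather these two signature statements — getting the orientations of the two halves of the double right, and justifying multiplicativity of the signature under the cover (which fails for general fibrations but holds for covers precisely because the tangent bundle, and hence the $L$-class, pulls back). Throughout I assume $b_1 = 0$ is preserved so that the conversion formulas apply, which holds in all cases where the lemma is invoked.
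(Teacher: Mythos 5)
Your proof is correct, and it supplies exactly the argument the paper intends: the paper omits the proof entirely, calling it ``a matter of simple algebraic computation,'' and your two geometric inputs --- Novikov additivity together with the orientation analysis for the double (the orientation-reversing boundary gluing is indeed what makes the two halves contribute $+\sigma(M)$ each rather than cancel), and multiplicativity of $e$ and of the $L$-class signature under the free double cover --- are the standard justifications, after which every row follows from the conversion formulas as you describe. Your explicit caveat that $b_1=0$ must persist under the operations is worth keeping: the $b_2^{\pm}$ row of the lemma can genuinely fail without it (e.g.\ doubling $S^4$ along an unknotted torus yields $b_1=1$), but in every instance where the paper invokes the lemma the relevant doubles are shown to be simply connected, so the assumption is harmless there.
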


We often apply the $\Z_2$--construction in the hopes of getting a 4-manifold with a fundamental group of order two and an odd intersection form. The following proposition gives easy criteria for when we have succeeded.

\begin{proposition}\label{prop:z2_pi1_odd}
	Let $X$ be the $\Z_2$-construction of $M$ along $\Sigma$, where $M$ is simply-connected. If the meridian of $\Sigma$ is trivial in $\pi_1(M-\Sigma)$, then $\pi_1(X)\cong \Z_2$. If $M$ contains an immersed surface $F$ such that $[F]\cdot [\Sigma]=0$ and $[F]\cdot [F]$ is odd, then $X$ has odd intersection form.
\end{proposition}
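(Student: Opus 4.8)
The plan is to exploit the defining feature of the $\Z_2$--construction: $X$ is the quotient of the double $\widetilde X = (M\setminus\nu\Sigma)\cup_\Phi(M\setminus\nu\Sigma)$ by the free, orientation--preserving involution $\tau$, so that $\widetilde X\to X$ is a double cover. For the fundamental group claim I would compute $\pi_1(\widetilde X)$ and read off $\pi_1(X)$ from this cover; for the parity claim I would produce a single closed surface in $X$ of odd self--intersection.

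For the first statement, write $M_0=M\setminus\nu\Sigma$, so that $\partial M_0\cong S^1\times\Sigma_g$ with the $S^1$--factor generated by the meridian $\mu$. The key observation is that the hypothesis ``$\mu=1$ in $\pi_1(M_0)$'' is in fact equivalent to $\pi_1(M_0)=1$. Indeed, filling $\nu\Sigma$ back in recovers $M$, and since $\Sigma$ has codimension two this kills $\mu$ normally, giving $\pi_1(M)\cong\pi_1(M_0)/\langle\langle\mu\rangle\rangle$. As $\pi_1(M)=1$, the group $\pi_1(M_0)$ is normally generated by $\mu$; if in addition $\mu=1$ in $\pi_1(M_0)$, then $\pi_1(M_0)=1$. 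Now $\widetilde X$ is a union of two copies of the simply connected $M_0$ glued along the connected hypersurface $\partial M_0$, so van Kampen gives $\pi_1(\widetilde X)\cong 1*_{\pi_1(\partial M_0)}1$, which is trivial regardless of the gluing map $\Phi$. Since $\tau$ is free, $\widetilde X$ is the universal cover of $X$ and $\pi_1(X)$ is the deck group $\Z_2$.

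For the parity statement, I would first arrange $F$ to be disjoint from $\Sigma$. Because $[F]\cdot[\Sigma]=0$, tubing the intersection points of $F$ and $\Sigma$ together in oppositely signed pairs along arcs in $\Sigma$ produces a surface in the class $[F]$ lying in $M_0$; this does not affect $[F]\cdot[F]$, which depends only on the homology class, so I may assume $F\subset M_0$ with $[F]\cdot[F]$ odd. Next I view $F$ inside the interior of one of the two copies of $M_0$ in $\widetilde X$. That copy is a fundamental domain for $\tau$ whose interior maps by the covering projection $p$ diffeomorphically onto an open subset of $X$, so $\bar F:=p(F)$ is an immersed surface carrying a class $[\bar F]\in H_2(X;\Z)$. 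Because $p$ is a local diffeomorphism near $F$, a transverse pushoff of $F$ inside $M_0$ pushes forward to a transverse pushoff of $\bar F$, and the local count of intersection points is unchanged; hence $[\bar F]\cdot[\bar F]=[F]\cdot[F]$ is odd, and $Q_X$ is odd.

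The individual steps are routine; the point I would flag as the crux is the upgrade from ``the meridian is nullhomotopic in $M-\Sigma$'' to ``$M-\Sigma$ is simply connected,'' since this is exactly what makes the van Kampen computation collapse to the trivial group. In the parity argument the only delicate point is checking that the self--intersection survives passage to the quotient, which works precisely because pushing $F$ off $\Sigma$ confines it to the interior of a fundamental domain, where $p$ is a local diffeomorphism and intersection numbers are computed locally.
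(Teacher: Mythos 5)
Your proposal is correct and follows essentially the same route as the paper: the $\pi_1$ claim via ``meridian trivial $\Rightarrow$ $M\setminus\nu\Sigma$ simply connected $\Rightarrow$ the double is simply connected by van Kampen $\Rightarrow$ the quotient has $\pi_1\cong\Z_2$'' (which the paper dismisses as immediate), and the parity claim by tubing oppositely signed intersections of $F$ with $\Sigma$ along $\partial\nu\Sigma$ so the resulting surface lies in the complement and descends to $X$ with unchanged odd self-intersection. Your write-up just makes explicit the steps (triviality of $\pi_1(M\setminus\nu\Sigma)$ and the local-diffeomorphism argument for the quotient) that the paper leaves to the reader.
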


\begin{proof}
	The statement about $\pi_1(X)$ is immediate. If $F$ is disjoint from $\Sigma$, then $F$ survives as an immersed surface in $X$, making $Q_X$ odd. If not, using the condition $[F]\cdot [\Sigma]=0$, we can construct $F'$ by pairing up positive and negative geometric intersections of $F$ and $\Sigma$, and resolving each pair by removing the neighborhoods of $F$ around each point and gluing cylinders $S^1 \times [0,1]\subset S^1\times \Sigma \cong \partial \nu \Sigma, $ where the image of $[0,1]$ connects intersection points. Then $F'$ will be disjoint from $\Sigma$ and have the same self--intersection as $F$.
\end{proof}

\section{Preliminary constructions and computations}

\subsection{Fiber-reversing double of Lefschetz fibrations} \label{section:fiber_reversing_double}

Some of our constructions involve performing the $\Z_2$-construction to $4$--manifolds admitting Lefschetz fibrations. As such, we assume basic familiarity with the notions of Lefschetz fibrations and their associated positive factorizations in $\text{Mod}(\Sigma_g)$. For a more thorough exposition, see \cite[Chapter 8]{GompfStip} or \cite{Bay_Kork}. Given the Lefschetz fibration $\pi\colon M\to S^2$, we may wonder to what extent the $\Z_2$--construction is compatible with the structure coming from $\pi$. That is, if we take the double of $M$ along a fiber $F$ of $\pi$, as described in \ref{sec:Z2}, does this double admit a Lefschetz fibration? If the gluing map were to preserve the orientation of $F$ and reverse the orientation of the $S^1$ factor, this would be an example of an ordinary fiber sum between Lefschetz fibrations. In this case, it is easy to see that the resulting manifold admits a Lefschetz fibration whose positive factorization is a concatenation of the factorizations from each summand. In the proposition below, we investigate when the fiber gluing is orientation--reversing, as is the case when we perform the $\Z_2$--construction. We make a quick note about notation, that for mapping classes $\phi,\psi\in\Mod(\Sigma)$, we let $\psi^\phi$ denote the conjugation $\phi\psi\phi^{-1}$. We denote a right--handed Dehn twist about a curve $\gamma$ by $t_\gamma$.

\begin{proposition}
	Let $f_1\colon M_1\to D^2$ and $f_2\colon M_2 \to D^2$ be Lefschetz fibrations with positive factorizations of $W_1 := t_{x_1}t_{x_2}\cdots t_{x_k}$ and $W_2 := t_{y_1} t_{y_2}\cdots t_{y_{\ell}}$ in $\Mod(\Sigma_g)$. Let $r$ be an orientation-reversing diffeomorphism of $\Sigma_g$. There exists a Lefschetz fibration $f_1\cup f_2\colon M_1 \cup M_2\to D^2$ whose total space is diffeomorphic to the union of $M_i$ glued, restricting to $f_i$ on $M_i$, and which admits a positive factorization $t_{x_1}t_{x_2}\cdots 
	t_{x_k}t_{r(y_{\ell})}\cdots t_{r(y_2)}t_{r(y_1)} = W_1(W_2^{-1})^{r}$ in $\Mod(\Sigma_g)$.
	
	\label{prop:reverse_LF}
\end{proposition}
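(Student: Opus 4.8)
The plan is to build the asserted total space directly from its factorization and then match it, piece by piece, with the fiber-reversing glued manifold, comparing throughout with the familiar case of an \emph{ordinary} fiber sum (fiber gluing orientation-preserving on $\Sigma_g$), where the resulting positive factorization is the plain concatenation $W_1W_2$. Concretely, since every positive factorization in $\Mod(\Sigma_g)$ is realized by a Lefschetz fibration over $D^2$, let $g\colon P\to D^2$ be the Lefschetz fibration whose ordered vanishing cycles are $x_1,\dots,x_k,\,r(y_\ell),\dots,r(y_1)$. Choosing a properly embedded arc $\alpha$ that splits the base into two sub-disks $D_1\cup_\alpha D_2$, with the first $k$ critical values lying in $D_1$ and the remaining $\ell$ in $D_2$, the restriction $g^{-1}(D_1)\to D_1$ is by construction the given fibration $f_1\colon M_1\to D^2$. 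It then remains to identify $g^{-1}(D_2)$ with $M_2$ compatibly with the fiber-reversing gluing, and to read off the factorization along $\partial D^2$.

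The heart of the argument is the identification of $g^{-1}(D_2)$, and the key algebraic input is the standard fact that for an orientation-reversing diffeomorphism $r$ of $\Sigma_g$,
\[
r\,t_\gamma\,r^{-1}=t_{r(\gamma)}^{-1},
\]
since conjugation by an orientation-reversing map carries a right-handed Dehn twist about $\gamma$ to a left-handed twist about $r(\gamma)$. I would show that $g^{-1}(D_2)$, the fibration with vanishing cycles $r(y_\ell),\dots,r(y_1)$, is obtained from $f_2$ by a global orientation-reversing \emph{reflection} $\Theta$ that reverses both the fiber orientation (via $r$) and the base orientation of $D^2$; being a product of two reversals, $\Theta$ is orientation-preserving on the total space, hence carries positive Lefschetz critical points to positive ones. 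Reversing the base orientation alone reverses the order of the critical values and replaces the boundary monodromy by its inverse, while the fiber reversal conjugates by $r$, so together they send $W_2=t_{y_1}\cdots t_{y_\ell}$ to
\[
(W_2^{-1})^{r}=r\,W_2^{-1}\,r^{-1}=t_{r(y_\ell)}\,t_{r(y_{\ell-1})}\cdots t_{r(y_1)}.
\]
Crucially this is again a \emph{positive} factorization, confirming that $\Theta$ is an orientation-preserving diffeomorphism $M_2\to g^{-1}(D_2)$ intertwining $f_2$ with $g|_{D_2}$.

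Finally, I would check that under this identification the gluing of $g^{-1}(D_1)$ and $g^{-1}(D_2)$ across the fiber $g^{-1}(\alpha)\cong\Sigma_g\times\alpha$ is exactly the orientation-reversing, fiber-reversing map $\Phi$ from the $\Z_2$-construction (orientation-reversing on $\Sigma_g$, implemented by $r$), rather than the orientation-preserving fiber map of an ordinary fiber sum: this is immediate from the fact that $\Theta$ restricts to $r$ on fibers over $\alpha$. Granting this, $P$ is orientation-preservingly diffeomorphic to the glued manifold $M_1\cup M_2$, with $g$ restricting to $f_1$ and $f_2$, and reading the vanishing cycles along $\partial D^2$ from the $D_1$-side and then the $D_2$-side yields the positive factorization $t_{x_1}\cdots t_{x_k}\,t_{r(y_\ell)}\cdots t_{r(y_1)}=W_1(W_2^{-1})^{r}$, as claimed.

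I expect the main obstacle to be the orientation bookkeeping in the reflection step: one must verify that reversing the fiber orientation alone would turn the positive critical points of $f_2$ into negative ones, that this is precisely corrected by the simultaneous base reversal so that positivity (and hence the Lefschetz condition) is restored, and that the induced boundary identification is genuinely $\Phi$, with the vanishing cycles appearing in the reversed order $r(y_\ell),\dots,r(y_1)$. Everything else — the existence of $P$ from its factorization, the recovery of $f_1$ over $D_1$, and the concatenation of vanishing cycles along $\partial D^2$ — is routine.
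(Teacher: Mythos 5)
Your proposal is correct in substance, but it runs the argument in the opposite direction from the paper, and the comparison is worth making explicit. The paper starts from the gluing: it forms $M_1\cup_{a\times r}M_2$ explicitly, builds a path system on the glued base $\mathcal D$, and computes the new monodromy directly, using that parallel transport along the connecting path $\beta$ identifies the reference fibers via a map isotopic to $r$; the inverse twists $t_{y_i}^{-1}$ appear because the loops around the second set of critical values are traversed with reversed orientation, and the final step is the conjugation identity $[r]\,t_{y}^{-1}\,[r]^{-1}=t_{r(y)}$. You instead synthesize the fibration $g\colon P\to D^2$ from the claimed factorization $W_1(W_2^{-1})^{r}$ and then decompose it, identifying $g^{-1}(D_2)$ with $M_2$ via the double-reversal principle: reversing both the base orientation and the fiber orientation (the latter via $r$) preserves the total-space orientation, hence preserves positivity of critical points, and transforms $W_2$ into $(W_2^{-1})^{r}=t_{r(y_\ell)}\cdots t_{r(y_1)}$. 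This is the same key algebraic fact as the paper's (your $r\,t_\gamma\,r^{-1}=t_{r(\gamma)}^{-1}$ is equivalent to their $[r]\,t_y^{-1}\,[r]^{-1}=t_{r(y)}$), but your packaging makes it conceptually transparent \emph{why} the reversed, $r$-conjugated word shows up, and it avoids the explicit path-system bookkeeping. What the paper's direction buys in exchange is that the fibration is constructed \emph{on} the specific fiber-reversing union $M_1\cup_{a\times r}M_2$ from the outset, which is exactly what the subsequent corollaries (the free involution on the fiber-reversing double) require; in your direction that content migrates into your final step, which you dismiss as ``immediate'' but is where the remaining care lies: the identifications $g^{-1}(D_1)\cong M_1$ and $\Theta\colon M_2\to g^{-1}(D_2)$ are each only canonical up to fiber-preserving isotopy (note also that $\Theta$ cannot literally restrict to $r$ on every fiber, only on the trivialized part over $\alpha$, up to isotopy), so you must arrange both identifications to be standard over the splitting arc before you can conclude that the induced boundary gluing is $a\times r$ rather than some other fiber-reversing bundle map. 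This is doable using the standard uniqueness of Lefschetz fibrations over $D^2$ with prescribed ordered vanishing cycles together with a chosen reference-fiber identification, and isotopic gluings yield isomorphic fibrations, so the gap is routine rather than fatal — but it should be spelled out, since the proposition's later use depends on the explicit gluing and not merely on the abstract total space.
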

\begin{proof}
	Let $p\in D^2$ be a basepoint and $P=\set{p_1,p_2, \dots,  p_k}$ be the critical values of $f_1$. Let $F_1$ be a fiber over $p$, identified with $\Sigma_g$ via the orientation-preserving diffeomorphism $\alpha_1\colon \Sigma_g \to F_1$. Let $\gamma_{1},\gamma_{2},\dots, \gamma_{k}$ be loops in $\pi_1(D^2-P)$, so that the right-to-left concatenation $\gamma_{1}\ast \gamma_{2}\ast \cdots \ast \gamma_{k}$ is isotopic to $\partial D^2$. Similarly, let $q\in D^2$ be a basepoint and $Q=\{q_1,q_2,\cdots q_\ell\}$ be the critical values of $f_2$. Let $F_2$ be the fiber over $q$, with the identification $\alpha_2\colon \Sigma_g \to F_2$. The loops $\delta_i \in \pi_1(D^2-Q)$ are arranged so that $\delta_1 \ast \delta_2 \ast \cdots \ast \delta_{\ell}$ is isotopic to $\partial D^2$. Figure \ref{fig:reverse_LF1} shows these critical values and path systems for $f_1$ and $f_2$. The monodromy of $f_1$ is given by 
	$$\mu_1:=\alpha_1^* \circ \phi_1 \colon  \pi_1(D^2-P,p)\rightarrow \Mod(F_1) \rightarrow \Mod(\Sigma_g),$$ where $\alpha_1^*([\varphi])=[\alpha^{-1}_1\circ\varphi\circ\alpha_1]$ and $\mu_1$ maps $\gamma_i$ to $t_{x_i}$ \cite[8.1]{GompfStip}. Because we adopt the convention that both function composition and path concatenation are read {\it right--to--left}, the monodromy is an honest homomorphism, so $\mu_1(\lambda_1\ast \lambda_2)=\mu_1(\lambda_1)\circ \mu_1(\lambda_2)$ for based loops $\lambda_1,\lambda_2$. The monodromy homomorphism for $f_2$ is $\mu_2:=\alpha_2^* \circ \phi_2 \colon  \pi_1(D^2-P)\rightarrow \Mod(F_2) \rightarrow \Mod(\Sigma_g),$ which is defined analogously.
	
	Let $\eta$ be a semi-circle arc on the right half of $\partial D^2$, colored green in Figure \ref{fig:reverse_LF1}. Then $f_1^{-1}(\eta)\cong \eta \times \Sigma_g$. We glue two copies of $D^2$ together by gluing $\eta\mapsto a(\eta)$, where $a$ is the antipodal map on $\partial D^2$. Let $\mathcal D= D^2\cup_{\eta = a(\eta)} D^2$. Then after identifying $f_1^{-1}(\eta)$ and $f_2^{-1}(\eta)$ with $\eta \times \Sigma_g$, we can glue $M_1$ and $M_2$ via the fiber-preserving map $(x,y)\mapsto (a(x),r(y))$ for $x\in \eta$ and $y\in \Sigma_g$. After this gluing, we have a well-defined map $f_1 \cup_{a\times r} f_2\colon  M_1\cup_{a\times r} M_2 \to \mathcal D$ which restricts to $f_i$ on $M_i$ (we abuse notation slightly by denoting the boundary gluing map by $a \times r$).
	
	Next we give a positive factorization in $\Mod(\Sigma_g)$ by constructing a path system on $\mathcal D$ with critical points $P\cup Q$. We first choose a basepoint, which will be the basepoint $p$ of the first component of $D^2\cup_{\eta = a(\eta)} D^2$. To define loops for the critical values in the second component $D^2$, we need to ``pull" the basepoint from $q$ to $p$ using a path $\beta\colon [0,1]\to \mathcal D$ from $p$ to $q$ as in Figure \ref{fig:reverse_LF1}. From the choice of gluing when forming $\mathcal D$, we see that after identifying $F_1 = (f_1\cup_{a\times r} f_2)^{-1}(p)$ and $F_2 = (f\cup_{a\times r} f)^{-1}(q)$ with $\Sigma_g$ by $\alpha_1^{-1}$ and $\alpha_2^{-1}$, the identification of $F_1$ with $F_2$ through a parallel transport along $\beta$ is isotopic to $r$. With this established, we define the path system on $\mathcal{D}$ with the following loops based at $p$ by

	\begin{figure}
	
	\begin{tikzpicture}

		
		\draw[thick] (0,0) circle (2.5);
		
		\fill (0,-1) circle (2pt) node[below right] {$p$}; 
		\draw[shift={(-1.05,0.8)}, thick] (-0.1,-0.1) -- (0.1,0.1) (-0.1,0.1) -- (0.1,-0.1) node[below, scale = 0.9] {$p_{1}$} node[above=10] {$\gamma_{1}$};
		\draw[shift={(-0.05,0.8)}, thick] (-0.1,-0.1) -- (0.1,0.1) (-0.1,0.1) -- (0.1,-0.1) node[below, scale = 0.9] {$p_{2}$} node[above=15] {$\gamma_{2}$};
		\draw[shift={(1.65,0.8)}, thick] (-0.1,-0.1) -- (0.1,0.1) (-0.1,0.1) -- (0.1,-0.1) node[below, scale = 0.9] {$p_{k}$} node[above=8] {$\gamma_{k}$};			
		\fill (0.5,0.8) circle (1.4pt); 
		\fill (0.8,0.8) circle (1.4pt); 
		\fill (1.1,0.8) circle (1.4pt); 
		
		
		\draw[thick, blue, rounded corners=10]
		(0,-1) .. controls  (2.2,-0.2) and (2.1,0.5).. (1.9,1.2) .. controls (1.2,0.8) and (1.4,0.5) ..(1.4,0)-- (0,-1);
		\draw[->,thick,blue] (1,-0.57) -- ++(28:0.01);
		\draw[thick, blue, rounded corners=15] (0,-1) --(0.4,0.5) -- (0, 1.5) -- (-0.4,0.5) -- (0,-1);
		\draw[->,thick,blue] (0.21,-0.2) -- ++ (75:0.01);
		\draw[thick, blue, rounded corners=10]
		(0,-1) .. controls  (-1.5,-0.2) and (-1.4,0.5).. (-1.2,1.2) .. controls (-0.5,0.8) and (-0.7,0.5) ..(-0.7,0)-- (0,-1);
		\draw[->,thick,blue] (-0.45,-0.35) -- ++(120:0.01);
		
		\draw[thick, green] (0,2.5) arc[start angle=90, end angle=-90, radius=2.5] node[right=1cm] {$\eta$}; 
		
		\fill (0,2.5) circle (2pt) node[above] {$A$}; 
		\fill (0,-2.5) circle (2pt) node[below] {$B$}; 

		\draw[thick, cyan] (0,-1) -- (1,-1)-- (2.28,-1);
		\draw[thick, cyan,->] (0,-1) -- (1.5,-1) node[above] {$\beta$};

		\begin{scope}[shift={(6,0)}]
			
			\draw[thick] (0,0) circle (2.5);
			
			\fill (0,-1) circle (2pt) node[below right] {$q$}; 
			\draw[shift={(-1.05,0.8)}, thick] (-0.1,-0.1) -- (0.1,0.1) (-0.1,0.1) -- (0.1,-0.1) node[below, scale = 0.9] {$q_1$} node[above=10] {$\delta_1$};
			\draw[shift={(-0.05,0.8)}, thick] (-0.1,-0.1) -- (0.1,0.1) (-0.1,0.1) -- (0.1,-0.1) node[below, scale = 0.9] {$q_2$} node[above=15] {$\delta_2$};
			\draw[shift={(1.65,0.8)}, thick] (-0.1,-0.1) -- (0.1,0.1) (-0.1,0.1) -- (0.1,-0.1) node[below, scale = 0.9] {$q_{\ell}$} node[above=8] {$\delta_{\ell}$};			
			
			\draw[thick, blue, rounded corners=10]
			(0,-1) .. controls  (2.2,-0.2) and (2.1,0.5).. (1.9,1.2) .. controls (1.2,0.8) and (1.4,0.5) ..(1.4,0)-- (0,-1);
			\draw[->,thick,blue] (1,-0.57) -- ++(33:0.01);
			\draw[thick, blue, rounded corners=15] (0,-1) --(0.4,0.5) -- (0, 1.5) -- (-0.4,0.5) -- (0,-1);
			\draw[->,thick,blue] (0.21,-0.2) -- ++ (75:0.01);
			\draw[thick, blue, rounded corners=10]
			(0,-1) .. controls  (-1.5,-0.2) and (-1.4,0.5).. (-1.2,1.2) .. controls (-0.5,0.8) and (-0.7,0.5) ..(-0.7,0)-- (0,-1);
			\draw[->,thick,blue] (-0.45,-0.35) -- ++(120:0.01);
			
			\fill (0.5,0.8) circle (1.4pt); 
			\fill (0.8,0.8) circle (1.4pt); 
			\fill (1.1,0.8) circle (1.4pt); 
			
			\draw[thick, green] (0,-2.5) arc[start angle=270, end angle=90, radius=2.5] node[left=1cm] {$a(\eta)$}; 
			
			\fill (0,2.5) circle (2pt) node[above] {$B$}; 
			\fill (0,-2.5) circle (2pt) node[below] {$A$}; 
			
			\draw[thick, cyan] (-2.28,1) .. controls (-1.5,-0.5) ..  (0,-1);
			\draw[->,thick,cyan] (-1.5,-0.3) -- ++(-45:0.01) node[below] {$\beta$};
		\end{scope}

	\end{tikzpicture}
	\caption{Based paths for the doubled fibration}
	\label{fig:reverse_LF1}
\end{figure}
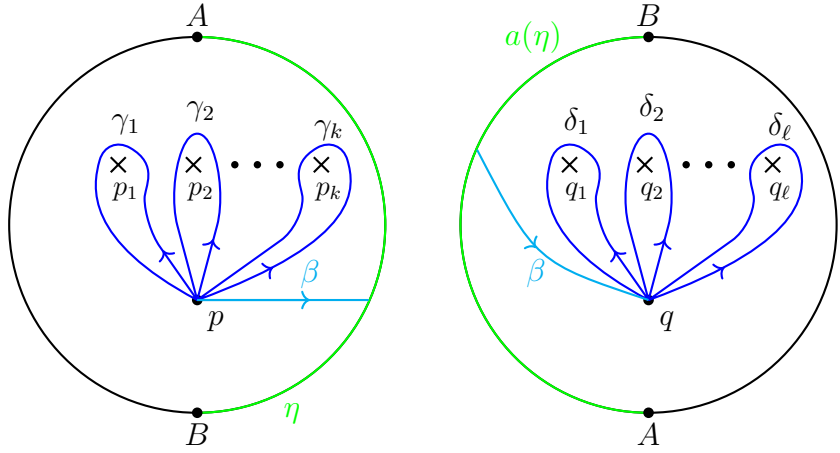
	
	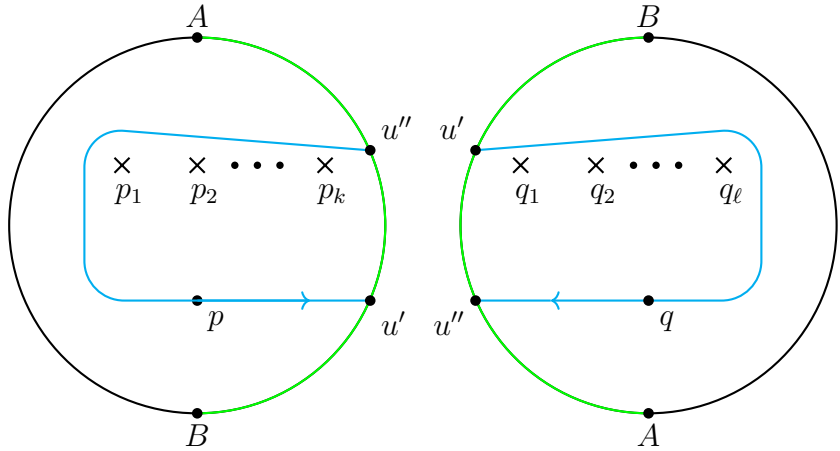
\begin{figure}
		
		\begin{tikzpicture}
			
			
			\draw[thick] (0,0) circle (2.5);
			
			\fill (0,-1) circle (2pt) node[below right] {$p$}; 
			\draw[shift={(-1,0.8)}, thick] (-0.1,-0.1) -- (0.1,0.1) (-0.1,0.1) -- (0.1,-0.1) node[below] {$p_{1}$};
			\draw[shift={(0,0.8)}, thick] (-0.1,-0.1) -- (0.1,0.1) (-0.1,0.1) -- (0.1,-0.1) node[below] {$p_{2}$};
			\draw[shift={(1.7,0.8)}, thick] (-0.1,-0.1) -- (0.1,0.1) (-0.1,0.1) -- (0.1,-0.1) node[below] {$p_{k}$};			
			
			\fill (0.5,0.8) circle (1.4pt); 
			\fill (0.8,0.8) circle (1.4pt); 
			\fill (1.1,0.8) circle (1.4pt);

			\draw[thick, green] (0,2.5) arc[start angle=90, end angle=-90, radius=2.5]; 
			
			\fill (0,2.5) circle (2pt) node[above] {$A$}; 
			\fill (0,-2.5) circle (2pt) node[below] {$B$}; 

			\draw[thick, cyan] (0,-1) --(2.28,-1);
			\draw[thick, cyan,->] (0,-1) -- (1.5,-1) ;
			\draw[thick, cyan,rounded corners=15, ->] (3.7,1) -- (7.5,1.3) -- (7.5,-1) -- (6,-1) -- (4.7,-1);
			\draw[thick,cyan] (4.7,-1)--(3.7,-1);
			\draw[thick, cyan, rounded corners=15] (2.28,1) -- (-1.5,1.3) -- (-1.5,-1) -- (0,-1);

			\begin{scope}[shift={(6,0)}]
				
				\draw[thick] (0,0) circle (2.5);
				
				\fill (0,-1) circle (2pt) node[below right] {$q$}; 
				\draw[shift={(-1.7,0.8)}, thick] (-0.1,-0.1) -- (0.1,0.1) (-0.1,0.1) -- (0.1,-0.1) node[below] {$q_1$};
				\draw[shift={(-0.7,0.8)}, thick] (-0.1,-0.1) -- (0.1,0.1) (-0.1,0.1) -- (0.1,-0.1) node[below] {$q_2$};
				\draw[shift={(1,0.8)}, thick] (-0.1,-0.1) -- (0.1,0.1) (-0.1,0.1) -- (0.1,-0.1) node[below] {$q_{\ell}$};			
				
				\fill (-0.2,0.8) circle (1.4pt); 
				\fill (0.1,0.8) circle (1.4pt); 
				\fill (0.4,0.8) circle (1.4pt);

				\draw[thick, green] (0,-2.5) arc[start angle=270, end angle=90, radius=2.5]; 
				
				\fill (0,2.5) circle (2pt) node[above] {$B$}; 
				\fill (0,-2.5) circle (2pt) node[below] {$A$}; 
				
				\begin{scope}[shift = {(-6,0)}]
					\fill (2.3,-1) circle (2pt) node[below right] {$u'$};
					\fill (3.7,-1) circle (2pt) node[below left] {$u''$}; 
					\fill (3.7,1) circle (2pt) node[above left] {$u'$}; 
					\fill (2.3,1) circle (2pt) node[above right] {$u''$}; 
				\end{scope}
			\end{scope}
			
		\end{tikzpicture}
	\caption{Based path for the total monodromy}
	\label{fig:reverse_LF2}
	\end{figure}
	
	\begin{align*}
		u_i = 
		\begin{cases} 
			\gamma_i & \text{for } 1 \leq i \leq k, \\
			\beta^{-1} \ast \delta_{\ell+k+1-i}^{-1} \ast \beta & \text{for } k+1 \leq i \leq k+\ell.
		\end{cases}
	\end{align*}
	where path concatenation is right--to--left. One can observe from Figure \ref{fig:reverse_LF2} that $u_1\ast u_2\ast \cdots \ast u_{k+\ell}$ is isotopic to $\partial \mathcal{D}$. We can compute the images of these loops under the new monodromy homomorphism $\mu\colon \pi_1(\mathcal D- (P\cup Q),p)\to \Mod(\Sigma_g)$. The loops $u_i$ for $1\leq i\leq k$ have precisely the same monodromy as $\gamma_i$. Hence $\mu(u_i)= t_{x_i}$. On the other hand when $k+1\leq i \leq k+\ell$, $u_i$ is mapped to the conjugate of $\mu_2(\delta^{-1}_{\ell+k+1-i})=t_{y_{\ell+k+1-i}}^{-1}$ by the automorphism induced by $\beta$. Hence, $\mu(u_i) = [r] \circ t_{y_{\ell+k+1-i}}^{-1} \circ [r]^{-1}$, where $[r]$ is the mapping class group represented by $r$.
	
	As a final step, we claim that $[r]t_y^{-1}[r]=t_{r(y)}$. Since both classes have representatives supported in $\nu(r(z))$, this can be verified directly by computing the image of an arc between the two boundary components of $\nu(r(z))$ under both maps. Hence, we have a positive factorization $t_{x_1}t_{x_2}\cdots 
	t_{x_k}t_{r(y_{\ell})}\cdots t_{r(y_2)}t_{r(y_1)}$, which defines a Lefschetz fibration over $\mathcal D$.
	
\end{proof} 

 Whenever $W_1(W_2^{-1})^{r}=1 \in \Mod(\Sigma_g)$, the Lefschetz fibration extends over $S^2$, which will be the main use for our purposes. Note that it suffices for $W_i$ to be a positive factorization of identity, but this is not required. In addition, despite $f_i$ being Lefschetz fibration over a disk in the proposition, we take liberty to glue in the same manner fibrations defined over spheres by removing a regular fiber from each $f_i$.

\begin{definition}
	Whenever $f_1=f=f_2$ in Proposition \ref{prop:reverse_LF}, we call the Lefschetz fibration $f\cup_{a\times r} f \colon X\cup_{a\times r} X\to S^2$ and its total space the {\it fiber-reversing double} of $f$ and $X$, respectively.
	
	\label{def:fiber_reversing_double}
\end{definition}

The strength of the fiber-reversing construction is that it produces a new Lefschetz fibration over a sphere whenever you have a Lefschetz fibration over a disk interacting nicely with the gluing map $r$.

\begin{corollary}
	Let $f\colon X\to D^2$ be a genus $g$ Lefschetz fibration with a positive factorization $V$. Let $r$ be an orientation-reversing diffeomorphism of $\Sigma_g$. If $V$ and $[r]$ commute in $\Mod(\Sigma_g)$, then there exists a genus $g$ Lefschetz fibration $\tilde f\colon \widetilde X \to S^2$ with positive factorization $V (V^{-1})^{r}$, where $\tilde X$ is two copies of $X$ glued along their boundaries.
	
	\label{cor:nontrivial_reverse_LF}
	
\end{corollary}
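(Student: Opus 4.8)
The plan is to obtain this statement as a direct specialization of Proposition \ref{prop:reverse_LF}, where the only extra input is the commuting hypothesis; this hypothesis forces the total monodromy to be trivial, so the fibration closes up over $S^2$.

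First I would apply Proposition \ref{prop:reverse_LF} with $f_1 = f = f_2$, and hence $W_1 = W_2 = V$. This immediately produces a Lefschetz fibration $f \cup_{a\times r} f$ over the disk $\mathcal D = D^2 \cup_{\eta = a(\eta)} D^2$, whose total space $\widetilde X$ is the union of two copies of $X$ glued along their boundaries via $a \times r$ (this is precisely the fiber-reversing double of Definition \ref{def:fiber_reversing_double}), and whose positive factorization is the word $V (V^{-1})^{r}$. Concretely, writing $V = t_{x_1}\cdots t_{x_k}$, this factorization is $t_{x_1}\cdots t_{x_k}\, t_{r(x_k)}\cdots t_{r(x_1)}$, which is an honest product of right-handed Dehn twists because conjugating a left-handed twist by the orientation-reversing class $[r]$ yields a right-handed twist about the image curve, exactly as in the final step of the proof of Proposition \ref{prop:reverse_LF}.

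Next I would show that the total monodromy of this disk fibration is trivial in $\Mod(\Sigma_g)$. Recalling the conjugation convention $\psi^\phi = \phi\psi\phi^{-1}$, we have $(V^{-1})^{r} = [r]\,V^{-1}\,[r]^{-1}$ as a mapping class. The hypothesis that $V$ and $[r]$ commute gives $[r]\,V\,[r]^{-1} = V$, and taking inverses yields $[r]\,V^{-1}\,[r]^{-1} = V^{-1}$; hence $V(V^{-1})^{r} = V\,V^{-1} = 1$ in $\Mod(\Sigma_g)$. By the remark following Proposition \ref{prop:reverse_LF}, a Lefschetz fibration over the disk whose total boundary monodromy is trivial extends over $S^2$: one caps off $\mathcal D$ with a copy of $\Sigma_g \times D^2$ using the trivial gluing. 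This produces the desired genus $g$ Lefschetz fibration $\tilde f \colon \widetilde X \to S^2$ with positive factorization $V(V^{-1})^{r}$.

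There is essentially no serious obstacle here, since all of the geometric content is already contained in Proposition \ref{prop:reverse_LF}; the corollary is the observation that commutativity collapses the total monodromy to the identity. The one point that warrants care is the distinction between $V(V^{-1})^{r}$ as a word and as a mapping class: it is a genuinely nontrivial positive factorization, whose letters record the $2k$ vanishing cycles, and hence the critical points, of $\tilde f$, even though the product represents the identity in $\Mod(\Sigma_g)$. It is exactly this triviality at the level of mapping classes that allows the fibration to close up over the sphere.
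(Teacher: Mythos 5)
Your proof is correct and follows exactly the route the paper intends: the paper states this corollary without a separate proof, treating it as immediate from Proposition \ref{prop:reverse_LF} together with the remark that the disk fibration extends over $S^2$ whenever $W_1(W_2^{-1})^{r}=1$ in $\Mod(\Sigma_g)$, which is precisely your computation $V(V^{-1})^{r}=V\,[r]\,V^{-1}\,[r]^{-1}=VV^{-1}=1$ from the commuting hypothesis. Your closing remark distinguishing the positive factorization as a word (recording the vanishing cycles) from its trivial image in $\Mod(\Sigma_g)$ is also the right point to emphasize.
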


In addition, if the diffeomorphism $r$ and a representative of $V$ commute on the nose, then the fiber-reversing double comes with a natural free involution.

\begin{corollary}
	Let $\tilde f\colon\tilde X \to S^2$ be a Lefschetz fibration as in Corollary \ref{cor:nontrivial_reverse_LF}. If $r$ is an involution and there exists a diffeomorphism $V_0$ representing $V$ such that $r$ and $V_0$ commute in $\text{Diff}(\Sigma_g)$, then $\tilde X$ admits an orientation-preserving free involution which interchanges two copies of $X$.
	
	\label{cor:reverse_LF_involution}
	
\end{corollary}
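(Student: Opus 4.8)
The plan is to realize $\tau$ as a lift, through the fibration $\tilde f$, of the \emph{free} orientation-reversing antipodal involution of the base $S^2$, where $\tau$ acts fiberwise by $r$. First I would revisit how $\tilde X$ is produced in Corollary \ref{cor:nontrivial_reverse_LF} and observe that the standing hypothesis there --- that $[V]$ and $[r]$ commute in $\Mod(\Sigma_g)$ --- lets one arrange the $2k$ critical values of $\tilde f$ antipodally symmetrically. Concretely, place the $k$ vanishing cycles $x_1,\dots,x_k$ of the first copy of $X$ over points $c_1,\dots,c_k$ in the open northern hemisphere, and place the cycles $r(x_1),\dots,r(x_k)$ of the second copy over the antipodes $-c_1,\dots,-c_k$ in the southern hemisphere. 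Because $[V]=[V]^{[r]}$, the equatorial monodromy read off from the northern cluster agrees with the $r$-conjugate read off from the southern cluster, so this antipodally symmetric critical data closes up to the very fibration $\tilde f\colon \tilde X\to S^2$ built in Corollary \ref{cor:nontrivial_reverse_LF}, with the preimages of the two hemispheres being the two copies of $X$ (the southern copy being $X$ transported by $r$).

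Next I would define $\tau$ to cover the antipodal map $a\colon S^2\to S^2$ and to act by $r$ on fibers. Since the fiberwise $r$ carries the local Lefschetz model at $c_i$, with vanishing cycle $x_i$, to the model at $-c_i$, with vanishing cycle $r(x_i)$, the map $\tau$ interchanges the two copies of $X$. As $a$ reverses the orientation of $S^2$ and $r$ reverses the orientation of $\Sigma_g$, the composite $\tau$ preserves the orientation of $\tilde X$. Freeness is then immediate \emph{and does not require $r$ to be free}: because the antipodal map $a$ has no fixed point, $\tau$ sends the fiber over every $b\in S^2$ to the fiber over $-b\neq b$, so $\tau$ has no fixed points whatsoever. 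This is precisely why the conclusion holds under the weak hypothesis that $r$ is merely an involution.

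The main work --- and the point where the strengthened hypothesis is used --- is upgrading this symmetry, which so far is only prescribed up to isotopy on each fiber, to a genuine smooth involution. The clutching of $\tilde f$ across the equator is governed by the concrete monodromy diffeomorphism $V_0$, and for the fiberwise-$r$ map to commute with this clutching as an \emph{actual} diffeomorphism one needs $r V_0 = V_0 r$ in $\mathrm{Diff}(\Sigma_g)$, which is exactly the hypothesis. With this on-the-nose commutation, $\tau$ is well defined and smooth across the equatorial gluing and through the singular fibers, and $\tau^2$ covers $a^2=\mathrm{id}_{S^2}$ while acting by $r^2=\mathrm{id}_{\Sigma_g}$ on fibers; the absence of any residual monodromy twist in $\tau^2$ is guaranteed precisely because $r$ and $V_0$ commute exactly, giving $\tau^2=\mathrm{id}$. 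Had $r$ and $V_0$ commuted only in $\Mod(\Sigma_g)$, the fiberwise map would close up only after an isotopy and $\tau^2$ would be merely isotopic to the identity, so the $\mathrm{Diff}$-level commutation is indispensable.

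I expect the genuinely delicate (but routine) steps to be the bookkeeping that matches the prescribed cyclic order $x_1,\dots,x_k,r(x_k),\dots,r(x_1)$ of the factorization to an honestly antipodally symmetric placement of critical values, and the verification that $\tau$ extends smoothly over the Lefschetz singular fibers by carrying one local model to its antipodal partner via $r$. The conceptual obstacle, however, is the single issue isolated above: the passage from $\Mod(\Sigma_g)$-level to $\mathrm{Diff}(\Sigma_g)$-level commutation, which is what converts an isotopy-theoretic symmetry of $\tilde X$ into an honest free involution.
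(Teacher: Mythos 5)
Your proposal is correct and is essentially the paper's own argument: the paper identifies $\partial X$ with the mapping torus $MT(\Sigma_g,V_0)=\R\times\Sigma_g/(t,p)\sim(t-2\pi,V_0(p))$ and takes the seam map $\psi(t,p)=(t+\pi,r(p))$ — precisely your fiberwise-$r$ lift of the antipodal map — invoking the on-the-nose commutation $r\circ V_0=V_0\circ r$ at exactly the same spot you do (to make this gluing well defined as an honest diffeomorphism rather than only up to isotopy), and deducing that the swap of the two copies of $X$ squares to the identity from $r^2=\mathrm{id}$. Freeness is also obtained the same way in both arguments: the involution interchanges the two copies of $\mathrm{Int}(X)$ and covers a fixed-point-free (antipodal) map on the base circle/sphere, so it has no fixed points regardless of whether $r$ itself is free.
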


\begin{proof}
	From inspecting the construction of $\tilde f$ given above, it's clear that $\widetilde X$ is two copies of $f\colon X\to D^2$ glued together. What is less clear is how the gluing works along $\partial X$, since the boundary is not necessarily a product space, but is diffeomorphic to the mapping torus $MT(\Sigma_g,V_0)=\R\times \Sigma_g / (t,p)\sim (t-2\pi,V_0(p))$. Identifying $\partial X$ with this mapping torus, let $\psi\colon \partial X \to \partial X$ be defined $\psi(t,p)=(t+\pi,r(p)).$ First we verify that $\psi$ is well-defined. Indeed,\[
		\psi(t-2\pi,V_0(p)) = (t-\pi,r\circ V_0(p)) = (t-\pi,V_0\circ r(p)) \sim (t+\pi,r(p)) = \psi(t,p).
	\]
	Moreover, $\psi$ induces the antipodal map on $S^1$ and $r$ on the $\Sigma_g$ fiber. That is, the diagram
	
	\begin{center}
		\begin{tikzcd}
			\Sigma_g \arrow[r,hook]\arrow[d,"r"] &\partial X \arrow[r,two heads]\arrow[d,"\psi"] & S^1\arrow[d,"a"]\\
			\Sigma_g \arrow[r,hook] &\partial X \arrow[r,two heads] & S^1\\ 
		\end{tikzcd}
	\end{center}
	commutes, where the left horizontal maps are inclusions of the fibers. This verifies that $f\cup_\psi f$ is a Lefschetz fibration with a positive factorization of $V (V^{-1})^{r}$, so $X\cup_\psi X\cong \widetilde X$.
	
	Now consider the involution $i\colon \widetilde X\to \widetilde X$ which identically interchanges the two copies of $\text{Int}(X)$ and restricts to $\psi$ along $\partial X$. The fact that $r^2=1$ ensures that $i|_{\partial X}$ is an involution, and the fact that $\psi$ descends to the antipodal map on $S^1$ ensures that $i|_{\partial X}$ is free. Hence, $\widetilde X$ admits the desired involution.
\end{proof}

\subsection{Product of surfaces minus many tori.}

In many of our constructions, we use Luttinger surgery to reduce a $4$--manifold's fundamental group to be either trivial or of order two. For each Luttinger surgery that we perform on a $4$--manifold $M$, the resulting manifold's fundamental group is a quotient of $\pi_1(M-\nu(T^2))$. If we perform multiple Luttinger surgeries simultaneously, we need to understand the fundamental group of $M-\nu(N)$, where $N$ is a collection of disjoint tori. This section provides normal generators for a product $\Sigma_k\times \Sigma_2$ after certain tori are excised, which will be used when we perform simultaneous Luttinger surgeries in Section \ref{sec:sigma_3}. Although there are very similar $\pi_1$--computations in the literature, e.g. \cite{baykur2024smooth}, \cite{BK07}, \cite{ABBKP}, \cite{FPS}, and \cite{akhmedov2009exotic}, we wanted to present normal generators for {\it this particular set of tori}, which we will use later.

To begin, we introduce some notation. Let $F$ be an orientable genus $k$ surface. For now, assume $G$ is a punctured torus. Let $\{x_1,y_1,\dots x_k,y_k\}$ be the standard symplectic generators of $H_1(F;\Z)$ and let $\{a,b\}$ be the symplectic generators of $H_1(G;\Z)$. Set $p_i=x_i\cap y_i$ and $g:=a\cap b$. Let $X_i$, $Y_i$, $A$, and $A'$ be parallel copies of $x_i$, $y_i$, and $a$, so that $X_i\times A$ and $Y_i\times A'$ form a collection of $2k$ disjoint Lagrangian tori in $F\times G$ equipped with the product symplectic structure. Now let $\beta_i$ be an arc from a selected base point $f\in F$ to each $p_i$ so that $\tilde x_i:=\beta_i^{-1} x_i\beta_i$ and $\tilde y_i:=\beta_i^{-1} y_i\beta_i$ are loops based at $f$ disjoint from each $X_j\times A$ and $Y_j\times A'$. We slightly abuse notation and give the loops $\tilde x_i \times \{g\}$ and $\tilde y_i\times \{g\}$ in $F\times G$ the same names. Let $\tilde a = \{f\}\times a$ and $\tilde b = \{f\}\times b$. Figure \ref{fig:surface_times_torus} shows this collection of loops.

\begin{figure}
	\centering
	\begin{tikzpicture}
		\begin{scope}[scale=.95]
		
		\begin{scope}
			
			
			
			\draw[thick,color = blue, name path = x1] (-2,0) circle [radius =1.1cm] (-1,-.447) node [anchor = west,scale=.8] {$x_1$};
			\draw[thick,color = blue, name path = xk] (0,2) circle [radius = 1.1cm](-1.05,2) node [anchor = east,scale=.8] {$x_k$};
			\draw[thick, color = blue, name path = x2] (0,-2) circle [radius = 1.1cm] (-1,-2+.447) node [anchor = east,scale=.8] {$x_2$};

			\draw[thick,color=blue, name path = y1] (-2.4,0) .. controls +(0,-.15) and +(0,-.15)  ..  (-4,0) (-4,0) node[scale=.8,anchor = east]{$y_1$};
			\draw[thick,color=blue,dashed] (-2.4,0) .. controls +(0,.15) and +(0,.15)  ..  (-4,0);
			
			\draw[thick,color=blue, name path = yk] (0,2.4) .. controls +(-.15,0) and +(-.15,0)  ..  (0,4) (0,4) node[scale=.8,anchor = south]{$y_k$};;
			\draw[thick,color=blue,dashed] (0,2.4) .. controls +(.15,0) and +(.15,0)  ..  (0,4);
			
			\draw[thick,color=blue, name path = y2] (0,-2.4) .. controls +(-.15,0) and +(-.15,0)  ..  (0,-4) (0,-4) node[scale=.8,anchor=north] {$y_2$};
			\draw[thick,color=blue,dashed] (0,-2.4) .. controls +(.15,0) and +(.15,0)  ..  (0,-4);

			\path [name intersections={of=x1 and y1,by=p1}];
			\path [name intersections={of=x2 and y2,by=p2}];
			\path [name intersections={of=xk and yk,by=pk}];
			
			
			\draw[thick, color = blue] (0,0)  .. controls +(-1,.25) and +(1,0) .. (-2,1.4) .. controls +(-1,0) and +(-.5,.5).. (p1)  (-2,1.35) node[anchor = south,scale=.8] {$\beta_1$};
			\draw[thick, color = blue] (0,0)  .. controls +(.25,1) and +(0,-1) .. (1.4,2) .. controls +(0,1) and +(.5,.5).. (pk)  (1.35,2) node[scale=.8, anchor = west] {$\beta_k$};
			\draw[thick, color = blue] (0,0)  .. controls +(.25,-1) and +(0,1) .. (1.4,-2) .. controls +(0,-1) and +(.5,-.5).. (p2)  (1.35,-2) node[anchor = west,scale=.8] {$\beta_2$};
			
			\draw[thick, color = red] (-2,0) circle [radius = .6cm] (-2,.6) node [anchor = south, scale =.8] {$X_1$};
			\draw[thick, color = red] (0,2) circle [radius = .6cm] (0,1.4) node [anchor = north, scale=.8] {$X_k$};
			\draw[thick, color = red] (0,-2) circle [radius = .6cm] (0,-1.4) node [anchor = south,scale=.8] {$X_2$};
			
			\draw[thick,color=red] (-2.2,-.346) .. controls +(.05,-.25) and +(.05,-.25) .. (-3.872,-1) (-3.872,-1)  node[anchor = east] {$Y_1$};
			\draw[thick,color=red,dashed] (-2.2,-.346) .. controls +(-.05,.25) and +(-.05,.25).. (-3.872,-1);
			
			\draw[thick,color=red,rotate=-90] (-2.2,-.346) .. controls +(.05,-.25) and +(.05,-.25) .. (-3.872,-1) (-3.872,-1)  node[anchor = south] {$Y_k$};
			\draw[thick,color=red,dashed,rotate=-90] (-2.2,-.346) .. controls +(-.05,.25) and +(-.05,.25).. (-3.872,-1);
			
			\draw[thick,color=red,rotate=90, yscale=-1] (-2.2,-.346) .. controls +(.05,-.25) and +(.05,-.25) .. (-3.872,-1) (-3.872,-1)  node[anchor = north] {$Y_2$};
			\draw[thick,color=red,dashed,rotate=90, yscale = -1] (-2.2,-.346) .. controls +(-.05,.25) and +(-.05,.25).. (-3.872,-1);
			
			\draw[very thick] (0,0) circle[radius = 4cm];
			\draw[very thick] (-2,0) circle [radius =.4cm];
			\draw[very thick] (0,2) circle [radius = .4cm];
			\draw[very thick] (0,-2) circle [radius = .4cm];
			\draw[fill] (2.5,0) circle [radius = 1.5pt] ;
			\draw[fill] (2.38,-.6) circle [radius = 1.5pt] ;
			\draw[fill] (2.38,.6) circle [radius = 1.5pt] ;
			
			\draw[fill,color = blue] (0,0) circle [radius = 2pt] (0,0) node [anchor = west] {$f$} ;
			\draw[fill, color = blue] (p1) circle [radius=1.5pt] (p1) node [anchor = north east, scale = .8] {$p_1$};
			\draw[fill, color = blue] (p2) circle [radius=1.5pt] (p2) node [anchor = north east, scale = .8] {$p_2$};
			\draw[fill, color = blue] (pk) circle [radius=1.5pt] (pk) node [anchor = south east, scale = .8] {$p_k$};
			
		\end{scope}

		\begin{scope}[xshift=9cm]
			
			\draw[thick] (-4,0) node[scale = 2] {$\times$};
			
			
			
			\draw[thick, color=blue, name path = b] (0,0) circle [radius = 1.25cm] (-1.25,0) node [anchor = east] {$b$};
			\draw[thick, color = blue, name path = a] (0,.75) .. controls +(-.25,0) and +(-.25,0) .. (0,3); 
			\draw[thick, color = blue, dashed] (0,.75) .. controls +(.25,0) and +(.25,0) .. (0,3) (0,3) node[anchor= south] {$a$};
			\path [name intersections={of=a and b,by=k}];
			\draw[fill, color = blue] (k) circle [radius=1.5pt] (k) node [anchor = south east] {$g$};
			
			
			\draw[thick, color = red] (.25, -.707) .. controls +(.25,+.05) and +(.25,0) .. (.75,-3) (.75, -3) node [anchor = north] {$A'$};
			\draw[thick, color = red, dashed] (.25, -.707) .. controls +(-.25,-.05) and +(-.25,0) .. (.75,-3);
			
			\draw[thick, color = red] (-.25, -.707) .. controls +(.25,-.05) and +(.25,0) .. (-.75,-2.93) (-.75,-3) node [anchor = north] {$A$};
			\draw[thick, color = red, dashed] (-.25, -.707) .. controls +(-.25,+.05) and +(-.25,0) .. (-.75,-2.93);

			\draw[very thick] (3,3) -- (0,3) .. controls +(-2,0) and + (0,2) .. (-3,0).. controls +(0,-2) and +(-2,0) .. (0,-3) -- (3,-3);
			\draw[very thick] (3,0) ellipse (.75 and 3);
			\draw[very thick] (0,0) circle [radius = .75cm];
			
		\end{scope}
		
		\end{scope}
		
	\end{tikzpicture}
	\caption{The curves on $F\times G\cong \Sigma_k\times (T^2-D^2)$. }
	\label{fig:surface_times_torus}
\end{figure}
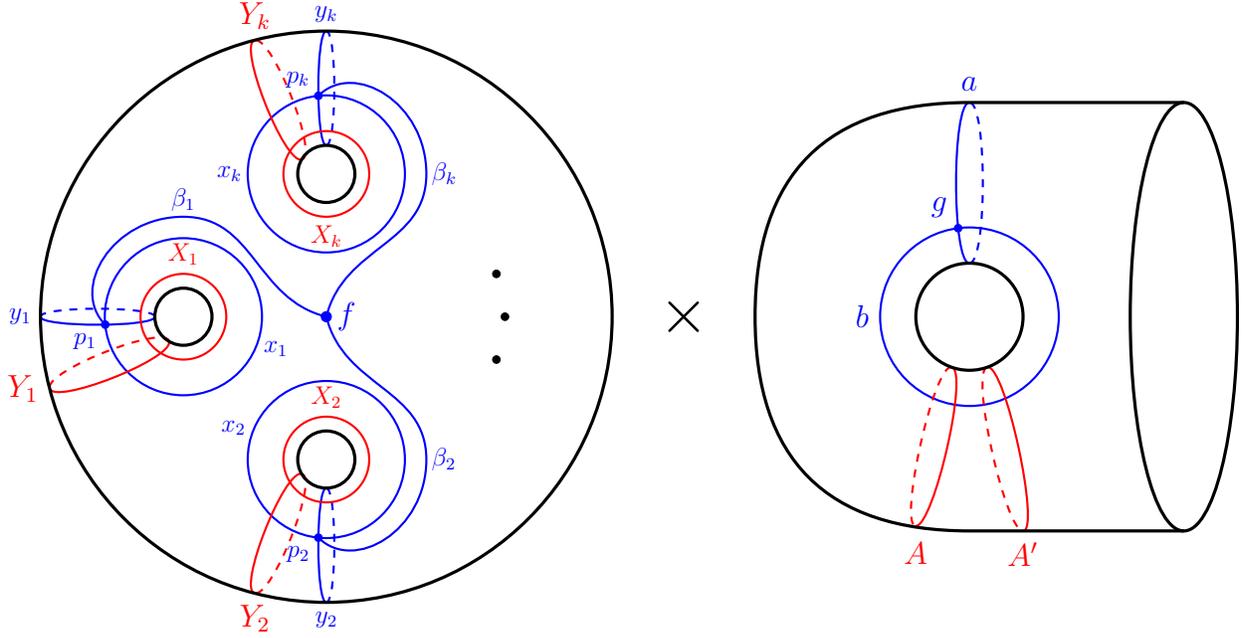

\begin{lemma}
	Let  $F$ be a closed surface with a positive genus $k$ and $G$ a punctured torus. Let $N$ be a union of neighborhoods of the following disjoint Lagrangian tori:\begin{align*}
		T_i:=&X_i\times A\\
		L_i:=&Y_i\times A'.
	\end{align*} Then $\pi_1(F\times G-N,(f,g))$ is normally generated by $\{\tilde x_1,\tilde y_1,\cdots , \tilde x_k,\tilde y_k, \tilde a,\tilde b\}$. Moreover, the curves generating the fundamental group of the boundary of the neighborhood of each Lagrangian torus in $N$ are as follows:\begin{align}
		\label{bd4} \mu_{T_i} &= [\tilde b^{-1},\tilde y_i^{-1}] && m_{T_i} = \tilde x_i &&  \ell_{T_i} =\tilde a\\
		\label{bd5} \mu_{L_i} &= [\tilde x_i^{-1},\tilde b] && m_{L_i} = \tilde y_i && \ell_{L_i} =\tilde b\tilde a\tilde b^{-1}.
	\end{align}
	\label{lem:sigma_k_times_punctured_torus}
\end{lemma}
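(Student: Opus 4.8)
The plan is to compute $\pi_1(F\times G - N)$ by comparing it with the ambient group $\pi_1(F\times G)$, and then to pin down the boundary curves by a local normal-disk analysis. The generation statement will fall out of the meridian formulas, so the real content is the computation of $\mu,m,\ell$ for each torus.

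First I would record the ambient group. Since $G$ is a punctured torus, $\pi_1(G)=\langle \tilde a,\tilde b\rangle$ is \emph{free} (so there is no relation between $\tilde a$ and $\tilde b$), and $\pi_1(F)=\langle \tilde x_i,\tilde y_i \mid \prod_i[\tilde x_i,\tilde y_i]\rangle$, whence $\pi_1(F\times G)=\pi_1(F)\times\pi_1(G)$ is generated by $\{\tilde x_i,\tilde y_i,\tilde a,\tilde b\}$. These are honest based loops in the complement: the $\tilde x_i,\tilde y_i$ are arranged disjoint from $N$ via the arcs $\beta_i$, and $\tilde a=\{f\}\times a,\ \tilde b=\{f\}\times b$ miss $N$ because $f$ avoids the curves $X_j,Y_j$. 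Because $N$ is a neighborhood of codimension-two submanifolds, the inclusion $F\times G-N\hookrightarrow F\times G$ is $\pi_1$-surjective, with kernel normally generated by the meridians $\mu_{T_i},\mu_{L_i}$: a generic loop misses the tori, and a generic nullhomotopy meets each torus in finitely many points, each contributing a conjugate of a meridian. Hence $\pi_1(F\times G-N)$ is generated by $\{\tilde x_i,\tilde y_i,\tilde a,\tilde b\}$ together with the meridians, and once I show each meridian is a commutator word in these four classes, the claimed normal generation follows at once.

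Next, the longitudes. Writing $\partial\nu(T_i)\cong T^3$ with $\pi_1\cong\mathbb Z^3$ generated by $\mu_{T_i},m_{T_i},\ell_{T_i}$, the two longitudes are pushoffs of the core circles of $T_i=X_i\times A$. The $X_i$-pushoff cobounds the annulus $X_i\times\{pt\}$ with $x_i$, and sliding its basepoint to $(f,g)$ along $\beta_i$ identifies it with $\tilde x_i$; likewise the $A$-pushoff is $\tilde a$. For $L_i=Y_i\times A'$ the $Y_i$-pushoff is $\tilde y_i$, but the $A'$-pushoff acquires a conjugation: since $A'$ is the parallel copy of $a$ on the far side of $g=a\cap b$ (see Figure \ref{fig:surface_times_torus}), any basepath from $(f,g)$ to $A'$ must cross $\tilde b$, so the pushoff reads $\tilde b\,\tilde a\,\tilde b^{-1}$. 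This produces the $m$ and $\ell$ entries in \eqref{bd4}--\eqref{bd5}.

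The crux is the meridian, and I expect the orientation/basepath bookkeeping here to be the main obstacle. I would realize $\mu_{T_i}$ as the boundary of a normal square $\delta_F\times\delta_G$, where $\delta_F\subset F$ is a short arc crossing $X_i$ once transversally (a sub-arc of $y_i$, in the normal direction) and $\delta_G\subset G$ crosses $A$ once (a sub-arc of $b$). Traversing $\partial(\delta_F\times\delta_G)$ and pushing the four sides out to the basepoint along $\beta_i$ and through $g$, the two pairs of parallel sides fail to cancel only by the ``monodromy'' of sliding across $X_i$ and across $A$, so the meridian becomes exactly the commutator of the dual classes $\tilde b$ and $\tilde y_i$; tracking the normal framings and orientations gives $\mu_{T_i}=[\tilde b^{-1},\tilde y_i^{-1}]$. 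The same computation for $L_i=Y_i\times A'$, with the roles of $x_i$ and $y_i$ interchanged and the placement of $A'$ accounted for, yields $\mu_{L_i}=[\tilde x_i^{-1},\tilde b]$ (the different shape of the two commutators reflects exactly these orientation and $A$-versus-$A'$ differences). Fixing orientation conventions for the normal framings and following the basepaths relative to the intersection points $p_i,g$ and the two copies $A,A'$ is the delicate part; the reference computations in \cite{baykur2024smooth,ABBKP} provide the template. With the meridian formulas established, the generation claim follows exactly as in the first paragraph, since each meridian lies in the subgroup generated by $\{\tilde x_i,\tilde y_i,\tilde a,\tilde b\}$.
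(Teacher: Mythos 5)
Your proposal is correct, but it takes a genuinely different route from the paper. The paper proves the lemma by induction on the genus of $F$: it decomposes $F=\Sigma_{k-1}^1\cup\Sigma_1^1$, cites \cite[Theorem 2]{BK07} both for the base case $k=1$ and for the genus-one piece $\Sigma_1^1\times G-(T_k\cup L_k)$, and assembles the pieces with Seifert--Van Kampen; the boundary formulas \eqref{bd4}--\eqref{bd5} are then simply inherited from the subspaces where the cited theorem already established them. You instead argue globally: the inclusion $F\times G-N\hookrightarrow F\times G$ is $\pi_1$-surjective with kernel normally generated by meridians, so normal generation reduces to expressing each meridian in terms of $\{\tilde x_i,\tilde y_i,\tilde a,\tilde b\}$, which you obtain from a direct local computation (normal square, equivalently a dual punctured torus), identifying the longitudes by pushoff annuli. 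What the paper's route buys is that the exact based representatives --- which inverses and conjugations occur, e.g.\ $\ell_{L_i}=\tilde b\tilde a\tilde b^{-1}$ --- come for free from the citation, the induction being pure bookkeeping; what your route buys is a conceptually self-contained argument with no decomposition of $F$, at the price of doing the delicate based-loop bookkeeping yourself (which you partially defer to the literature --- ultimately the same computation the paper outsources). One point you should make explicit for the argument to be airtight: your normal-square analysis a priori computes the meridian in $F\times G-T_i$, whereas the formula is asserted in the full complement $F\times G-N$. This requires that the dual torus $y_i\times b$ (resp.\ $x_i\times b$ for $L_i$) and the basepath homotopies sweeping $\beta_i\times b$ avoid all the \emph{other} tori of $N$; this does hold --- $y_i$ is disjoint from every $X_j$ with $j\neq i$ and from every $Y_j$, and $\beta_i$ misses all the pushoff curves, so $y_i\times b$ meets $N$ only in its single transverse intersection with $T_i$ --- and it is precisely what the parallel copies $X_i,Y_i,A,A'$ were set up to guarantee, but it is a disjointness check that needs to be stated rather than assumed.
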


\begin{proof}
	We proceed by induction. If the genus of $F$ is 1, then the lemma follows immediately from \cite[Theorem 2]{BK07}. Next, assume $F$ has genus $k$, and decompose it as $F=\Sigma_{k-1}^1 \cup \Sigma_1^1.$  Assume $f$ lies along the $S^1$ boundary where these two surfaces are glued together. Then \[F\times G- N = \left(\Sigma_{k-1}^1\times G - N_{k-1} \right) \cup \left( \Sigma_1^1\times G - (T_k\cup L_k) \right),\] 
	where $N_{k-1}=\cup_{i=1}^{k-1}(T_i\cup L_i).$ By the inductive hypothesis, $\set{x_i,y_i,a,b}_{1\leq i \leq k-1}$ normally generate $\pi_1(\Sigma_{k-1}\times G - N_{k-1}, (f,g))$. Since the meridian $\mu_G=\Pi_{i=1}^{k-1} [x_i,y_i]$, this collection also normally generates $\pi_1(\Sigma_{k-1}^1\times G - N_{k-1},(f,g))$. On the other hand, again by \cite[Theorem 2]{BK07} $\set{x_k,y_k,a,b}$ normally generate $\pi_1(\Sigma_1^1\times G - (T_k\cup L_k),(f,g))$. Then by Seifert-Van Kampen $\set{x_i,y_i,a,b}$ normally generate $F\times G- N$. Note that  \eqref{bd4} and \eqref{bd5} still hold in the $F\times G- N$ because they hold in the subspace $\Sigma_1^1 \times G - (T_k\cup L_k)$ and by the inductive hypothesis.
	\end{proof}

Now let $G$ be a closed genus two surface with symplectic generators $\{a_1,b_1,a_2,b_2\}$. Let $q_i=a_i\cap b_i$ and let $\gamma_i$ be an arc from a chosen base point $g$ to $q_i$. In the following proposition, we need one additional push-off of the curve $y_1$ on $F$ that is disjoint from $Y_1$. We label this new push-off $Y_1'$.  Figure \ref{fig:F_times_G} shows this collection of loops.

\begin{figure} 
	\centering
	\begin{tikzpicture}
		
		\begin{scope}[scale=.95]
		
		\begin{scope}
			
			
			
			\draw[thick,color = blue, name path = x1] (-2,0) circle [radius =1.1cm] (-1,-.447) node [anchor = west,scale=.8] {$x_1$};
			\draw[thick,color = blue, name path = xk] (0,2) circle [radius = 1.1cm](-1.05,2) node [anchor = east,scale=.8] {$x_k$};
			\draw[thick, color = blue, name path = x2] (0,-2) circle [radius = 1.1cm] (-1,-2+.447) node [anchor = east,scale=.8] {$x_2$};

			\draw[thick,color=blue, name path = y1] (-2.4,0) .. controls +(0,-.15) and +(0,-.15)  ..  (-4,0) (-4,0) node[scale=.8,anchor = east]{$y_1$};
			\draw[thick,color=blue,dashed] (-2.4,0) .. controls +(0,.15) and +(0,.15)  ..  (-4,0);
			
			\draw[thick,color=blue, name path = yk] (0,2.4) .. controls +(-.15,0) and +(-.15,0)  ..  (0,4) (0,4) node[scale=.8,anchor = south]{$y_k$};;
			\draw[thick,color=blue,dashed] (0,2.4) .. controls +(.15,0) and +(.15,0)  ..  (0,4);
			
			\draw[thick,color=blue, name path = y2] (0,-2.4) .. controls +(-.15,0) and +(-.15,0)  ..  (0,-4) (0,-4) node[scale=.8,anchor=north] {$y_2$};
			\draw[thick,color=blue,dashed] (0,-2.4) .. controls +(.15,0) and +(.15,0)  ..  (0,-4);

			\path [name intersections={of=x1 and y1,by=p1}];
			\path [name intersections={of=x2 and y2,by=p2}];
			\path [name intersections={of=xk and yk,by=pk}];
			
			
			\draw[thick, color = blue] (0,0)  .. controls +(-1,.25) and +(1,0) .. (-2,1.4) .. controls +(-1,0) and +(-.5,.5).. (p1)  (-2,1.35) node[anchor = south,scale=.8] {$\beta_1$};
			\draw[thick, color = blue] (0,0)  .. controls +(.25,1) and +(0,-1) .. (1.4,2) .. controls +(0,1) and +(.5,.5).. (pk)  (1.35,2) node[scale=.8, anchor = west] {$\beta_k$};
			\draw[thick, color = blue] (0,0)  .. controls +(.25,-1) and +(0,1) .. (1.4,-2) .. controls +(0,-1) and +(.5,-.5).. (p2)  (1.35,-2) node[anchor = west,scale=.8] {$\beta_2$};
			
			\draw[thick, color = red] (-2,0) circle [radius = .6cm] (-2,.6) node [anchor = south, scale =.8] {$X_1$};
			\draw[thick, color = red] (0,2) circle [radius = .6cm] (0,1.4) node [anchor = north, scale=.8] {$X_k$};
			\draw[thick, color = red] (0,-2) circle [radius = .6cm] (0,-1.4) node [anchor = south,scale=.8] {$X_2$};
			
			\draw[thick,color=red] (-2.2,-.346) .. controls +(.05,-.25) and +(.05,-.25) .. (-3.872,-1) (-3.872,-1)  node[anchor = east] {$Y_1$};
			\draw[thick,color=red,dashed] (-2.2,-.346) .. controls +(-.05,.25) and +(-.05,.25).. (-3.872,-1);
			
			\draw[thick,color=red,rotate=-90] (-2.2,-.346) .. controls +(.05,-.25) and +(.05,-.25) .. (-3.872,-1) (-3.872,-1)  node[anchor = south] {$Y_k$};
			\draw[thick,color=red,dashed,rotate=-90] (-2.2,-.346) .. controls +(-.05,.25) and +(-.05,.25).. (-3.872,-1);
			
			\draw[thick,color=red,rotate=90, yscale=-1] (-2.2,-.346) .. controls +(.05,-.25) and +(.05,-.25) .. (-3.872,-1) (-3.872,-1)  node[anchor = north] {$Y_2$};
			\draw[thick,color=red,dashed,rotate=90, yscale = -1] (-2.2,-.346) .. controls +(-.05,.25) and +(-.05,.25).. (-3.872,-1);
			
			\draw[very thick] (0,0) circle[radius = 4cm];
			\draw[very thick] (-2,0) circle [radius =.4cm];
			\draw[very thick] (0,2) circle [radius = .4cm];
			\draw[very thick] (0,-2) circle [radius = .4cm];
			\draw[fill] (2.5,0) circle [radius = 1.5pt] ;
			\draw[fill] (2.38,-.6) circle [radius = 1.5pt] ;
			\draw[fill] (2.38,.6) circle [radius = 1.5pt] ;
			
			\draw[fill,color = blue] (0,0) circle [radius = 2pt] (0,0) node [anchor = west] {$f$} ;
			\draw[fill, color = blue] (p1) circle [radius=1.5pt] (p1) node [anchor = north east, scale = .8] {$p_1$};
			\draw[fill, color = blue] (p2) circle [radius=1.5pt] (p2) node [anchor = north east, scale = .8] {$p_2$};
			\draw[fill, color = blue] (pk) circle [radius=1.5pt] (pk) node [anchor = south east, scale = .8] {$p_k$};
			
		\end{scope}

		\begin{scope}[xshift=9cm]
			
			\draw[thick] (-4,0) node[scale = 2] {$\times$};
			
			\draw[thick, color=blue, name path = b1] (-1.25,0) circle [radius = 1cm](-2.2,0) node[anchor=east]{$b_1$};
			\draw[thick, color=blue, name path = b2] (1.25,0) circle [radius = 1cm] (2.2,0) node[anchor=west] {$b_2$};
			
			\draw[thick,color=blue, name path = a1] (-1.25,.5) .. controls +(.25,0) and +(.25,0) .. (-1.25,2)(-1.25,2) node [anchor = south] {$a_1$};
			\draw[thick,color=blue, dashed] (-1.25,.5) .. controls +(-.25,0) and +(-.25,0) .. (-1.25,2);
			
			\draw[thick,color=blue, name path = a2] (1.25,.5) .. controls +(.25,0) and +(.25,0) .. (1.25,2)(1.25,2) node [anchor = south] {$a_2$};
			\draw[thick,color=blue, dashed] (1.25,.5) .. controls +(-.25,0) and +(-.25,0) .. (1.25,2);
			
			\path [name intersections={of=a1 and b1,by=k1}];
			\path [name intersections={of=a2 and b2,by=k2}];
			\draw[fill, color = blue] (k1) circle [radius=1.5pt] (k1) node[scale = .8, anchor=north west] {$q_1$};
			\draw[fill, color = blue] (k2) circle [radius=1.5pt] (k2) node[scale = .8, anchor=north west] {$q_2$};
			
			\draw[fill, color = blue] (0,1.5) circle [radius=1.8pt] (0,1.5) node[anchor=north] {$g$};
			\draw[thick, color = blue] (k1) .. controls +(0,0) and +(-.5,0) .. (0,1.5) .. controls +(.5,0) and +(0,0).. (k2) (-.6,1.55) node[scale=.8] {$\gamma_1$} (.6,1.55) node[scale=.8] {$\gamma_2$};
			
			\draw[thick, color=red] (1.25,-.5) .. controls +(.25,0) and +(.25,0) .. (1.25,-2) (1.25,-2) node [anchor = north] {$A_2$};
			\draw[thick, color=red, dashed] (1.25,-.5) .. controls +(-.25, 0) and +(-.25,0) .. (1.25,-2);
			
			\draw[thick, color = red] (-1,-.433) .. controls +(.25,.05) and + (.25,0) .. (-.75,-2) (-.75,-2) node[anchor = north] {$A_1'$};
			\draw[thick, color = red, dashed] (-1,-.433) .. controls +(-.25,-.05) and + (-.25,0) .. (-.75,-2);
			
			\draw[thick, color = red] (-1.5,-.433) .. controls +(.25,-.05) and + (.25,0) .. (-1.75,-1.8) (-1.75,-1.8) node[anchor = north] {$A_1$};
			\draw[thick, color = red, dashed] (-1.5,-.433) .. controls +(-.25,.05) and + (-.25,0) .. (-1.75,-1.8);
			
			\draw[thick, color=red] (1.25,0) circle [radius = .7cm] (.75,0);
			\draw [->, color = red] (2,-1) -- (1.7,-.6); 
			\draw[thick, color = red] (1.8,-.95) node[anchor=north west, scale = .8]{$B_2$};
			
			\draw[very thick] (1,2) -- (-1,2) .. controls +(-1,0) and + (0,1) .. (-3,0).. controls +(0,-1) and +(-1,0) .. (-1,-2) -- (1,-2) .. controls +(1,0) and +(0,-1) .. (3,0) .. controls +(0,1) and +(1,0) .. (1,2);
			
			\draw[very thick] (-1.25,0) circle[radius = .5cm];
			\draw[very thick] (1.25	,0) circle[radius = .5cm];
			
		\end{scope}
		
		\end{scope}
		
	\end{tikzpicture}
	\caption{The curves on $F\times G\cong \Sigma_k \times \Sigma_2$.}
	\label{fig:F_times_G}
\end{figure}
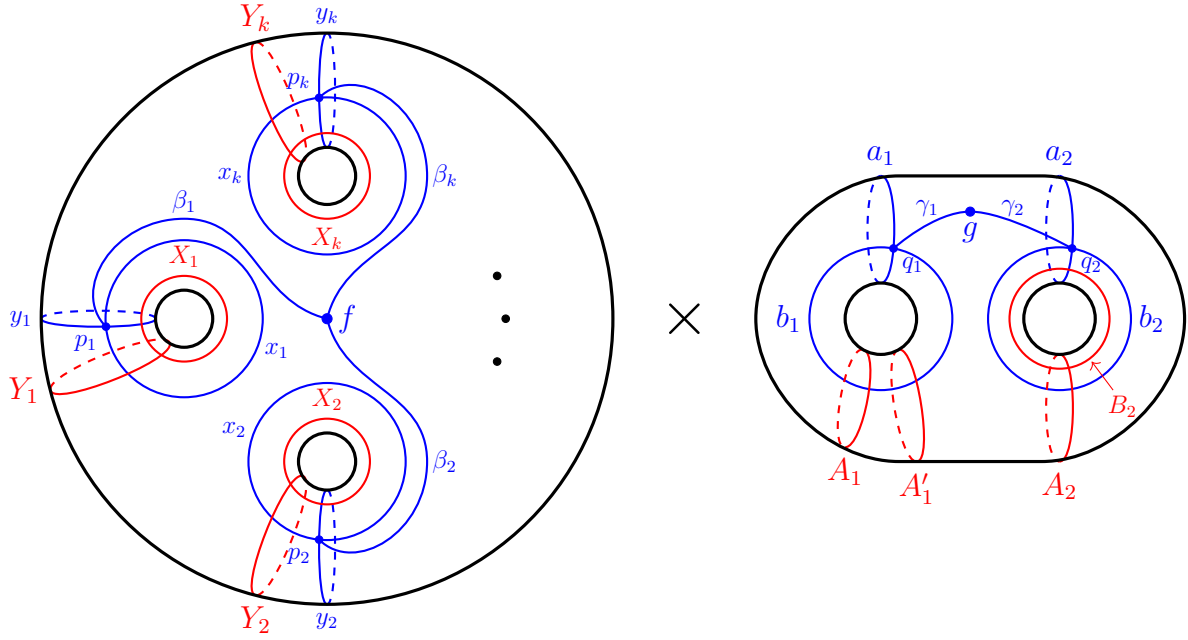

\begin{proposition}
	Let $F$ be a closed genus $k$ surface and $G$ a closed genus two surface. Let $N$ be a union of neighborhoods of the following disjoint Lagrangian tori:\begin{align*}
		T_i &:=  X_i\times A_1   \qquad J_1 :=  Y_1 \times A_2 \\
		L_i &:=  Y_i \times A'_1  \qquad \; J_2 :=  Y'_1\times B_2.\\
	\end{align*} Then $\pi_1(F\times G-N,\,(f,g))$ is normally generated by $\{\tilde x_1,\tilde y_1,\cdots , \tilde x_k,\tilde y_k, \tilde a_1,\tilde b_1,\tilde a_2,\tilde b_2\}$. The curves generating $\pi_1$ for the neighborhood boundaries of each torus are:\begin{align}
		\label{bd1} \mu_{T_i} &= [\tilde b_1^{-1},\tilde y_i^{-1}] && m_{T_i} = \tilde x_i && \ell_{T_i} =\tilde a_1\\
		\mu_{L_i} &= [\tilde x_i^{-1},\tilde b_1] && m_{L_i} = \tilde y_i &&  \ell_{L_i} =\tilde b_1\tilde a_1\tilde b_1^{-1}\\
		\label{bd3} \mu_{J_1} &= [\tilde x_1^{-1},\tilde b_2^{-1}] && m_{J_1} = \tilde a_2 &&  \ell_{J_1} =\tilde y_1\\
		\label{bd2} \mu_{J_2} &= [\tilde a_2^{-1},\tilde x_1] && m_{J_2} = \tilde b_2 &&  \ell_{J_2} =\tilde x_1\tilde y_1\tilde x_1^{-1}.
	\end{align} The relation $[\tilde b_2, \tilde y_i]=1$ holds for all $i$.
	\label{prop:sigmak_times_sigma2}
\end{proposition}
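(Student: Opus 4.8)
The plan is to reduce to the already-established Lemma \ref{lem:sigma_k_times_punctured_torus} and to \cite[Theorem 2]{BK07} by splitting the genus-two surface $G$ along the neck between its two handles. Write $G = G_1 \cup_c G_2$ as a union of two once-punctured tori glued along a separating circle $c$, with the basepoint $g$ placed on $c$, where $G_1$ carries $\{a_1,b_1\}$ and $G_2$ carries $\{a_2,b_2\}$. The removed tori then split as $N = N_1 \sqcup N_2$ with $N_1 = \bigcup_i (T_i \cup L_i) \subset F \times G_1$ (all built from pushoffs of $a_1$) and $N_2 = J_1 \cup J_2 \subset F \times G_2$ (built from pushoffs of $a_2, b_2$). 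Since $c$ separates $G_1$ from $G_2$ and $N$ is disjoint from a collar of $F\times c$, I obtain
\[ F \times G - N = (F \times G_1 - N_1) \cup_{F \times c} (F \times G_2 - N_2), \]
and I will apply Seifert--Van Kampen with overlap $F \times c \cong F \times S^1$, whose fundamental group is $\pi_1(F)\times\langle c\rangle$.

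First I would treat $F \times G_1 - N_1$: this is exactly the setting of Lemma \ref{lem:sigma_k_times_punctured_torus} (with $G_1$ in place of the punctured torus and $a_1,b_1$ in place of $a,b$), so it is normally generated by $\{\tilde x_i,\tilde y_i,\tilde a_1,\tilde b_1\}$, and its boundary relations are precisely \eqref{bd1} together with the $L_i$ line, which are \eqref{bd4} and \eqref{bd5} after relabeling. For $F \times G_2 - N_2$, I would observe that both $J_1=Y_1\times A_2$ and $J_2=Y_1'\times B_2$ lie over the first handle of $F$, so decomposing $F = \Sigma_1^1 \cup \Sigma_{k-1}^1$ places $N_2$ inside $\Sigma_1^1\times G_2$. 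The configuration of $J_1,J_2$ there is the \emph{transpose} of the one handled by \cite[Theorem 2]{BK07}: in that theorem the two removed tori share one curve on a single factor and use the two generators of the other factor, whereas here they share the single curve $y_1$ on the $F$-factor and use the two generators $a_2,b_2$ of $G_2$. Applying \cite[Theorem 2]{BK07} with the roles of the two surface factors interchanged therefore yields the boundary relations \eqref{bd3} for $J_1$ and \eqref{bd2} for $J_2$, while the complementary piece $\Sigma_{k-1}^1\times G_2$ contributes the remaining $\tilde x_i,\tilde y_i$ with $a_2,b_2$ commuting past them as in the product.

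Gluing the two pieces by Seifert--Van Kampen over $F\times c$ identifies the common $\tilde x_i,\tilde y_i$ and the boundary class $c=[\tilde a_1,\tilde b_1]=[\tilde b_2,\tilde a_2]$, producing the advertised normal generating set $\{\tilde x_1,\tilde y_1,\dots,\tilde x_k,\tilde y_k,\tilde a_1,\tilde b_1,\tilde a_2,\tilde b_2\}$; all four boundary-relation blocks persist because each already holds in the relevant piece and every generator occurring in them survives into the amalgam. For the final relation $[\tilde b_2,\tilde y_i]=1$ I would exhibit an embedded torus $Y_i\times B_2\subset F\times G-N$. When $i\neq 1$ the pushoff $Y_i$ is disjoint from $Y_1,Y_1'$ and from every $X_j$, while $B_2\subset G_2$ is disjoint from the $a_1$-pushoffs in $G_1$, so $Y_i\times B_2$ misses all of $N$; when $i=1$ I would instead take a fresh pushoff $Y_1''$ of $y_1$ disjoint from both $Y_1$ and $Y_1'$, so that $Y_1''\times B_2$ avoids $J_1$ and $J_2$ even though $J_2$ also uses $B_2$. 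In either case the torus yields $[\tilde y_i,\tilde b_2]=1$ after connecting it to the basepoint.

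I expect the main obstacle to be the transposed application of \cite[Theorem 2]{BK07} in the second piece: one must carefully track which boundary curve plays the role of the meridian versus the two longitude-type curves under the factor interchange, and verify that the based loops $\tilde x_1,\tilde y_1,\tilde a_2,\tilde b_2$ defined through the arcs $\beta_1,\gamma_i$ agree with those used in the first piece, so that the Van Kampen identification is consistent and the relations \eqref{bd3},\eqref{bd2} are stated in the correct conjugacy form. The closing commuting relation is then clean but genuinely relies on the bookkeeping of the two, and for $i=1$ three, parallel pushoffs of $y_1$.
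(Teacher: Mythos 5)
Your proposal follows essentially the same route as the paper's proof: split $G$ into two once-punctured tori, handle the piece containing the $T_i,L_i$ by Lemma \ref{lem:sigma_k_times_punctured_torus}, handle the piece containing $J_1,J_2$ by further splitting $F=\Sigma_1^1\cup\Sigma_{k-1}^1$ and applying \cite[Theorem 2]{BK07} with the roles of the two surface factors interchanged, and then assemble everything with Seifert--Van Kampen. The only divergence is the closing relation $[\tilde b_2,\tilde y_i]=1$: the paper reads it off from the based relations supplied by \cite[Theorem 2]{BK07} (for $i=1$) and from the product structure of $\Sigma_{k-1}^1\times\Sigma_1^1$ (for $i>1$), whereas you exhibit disjoint embedded tori such as $Y_i\times B_2$ and $Y_1''\times B_2$ -- this works, but it only gives commutation of conjugates of $\tilde y_i$ and $\tilde b_2$ until one does the basepoint bookkeeping you yourself flag (and, as a small slip, $Y_i$ is not disjoint from $X_i$, though disjointness of $B_2$ from $A_1$ already saves that case), so the paper's product-piece argument is the cleaner way to finish for $i>1$.
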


\begin{proof}
	By Seifert-Van Kampen, $\pi_1(F\times G - N, (f,g))$ will be a quotient of \[
	\pi_1(\Sigma_k\times \Sigma_1^1 - N_1,(f,g)) * \pi_1(\Sigma_k\times \Sigma_1^1 - N_2,(f,g)),
	\] where $N_1 = \cup_i(T_i\cup L_i)$ and $N_2 = J_1\cup J_2$. Lemma \ref{lem:sigma_k_times_punctured_torus} provides the normal generators of $\pi_1(\Sigma_k\times \Sigma_1^1 - N_1,(f,g))$ after the conjugation by $\gamma_1$ to re-base from $(f,q_1)$ to $(f,g)$. To compute $\pi_1(\Sigma_k\times \Sigma_1^1 - N_2,(f,g))$, we decompose $\Sigma_k\times \Sigma_1^1 -N_2$ as \[
	(\Sigma_1^1 \times \Sigma_1^1-N_2) \cup \left(\Sigma_{k-1}^1\times \Sigma_1^1\right).
	\] Once again we apply \cite[Theorem 2]{BK07} to find that $\{\tilde a_2,\tilde b_2,\tilde x_1, \tilde y_1\}$ normally generate $\pi_1(\Sigma_1^1\times \Sigma_1^1-N_2,(f,g))$. The generating curves for the neighborhood boundary of each $J_i$ are as in \eqref{bd3} and \eqref{bd2}. Moreover, by the cited Theorem 2, the following relations hold:\[
	[\tilde a_2,\tilde y_1]=[\tilde b_2,\tilde y_1] = [\tilde b_2,\tilde x_1\tilde y_1\tilde x_1^{-1}]=1.
	\] On the other hand, $\Sigma_{k-1}^1\times \Sigma_1^1$ is a product space whose fundamental group is much easier to compute. Applying Seifert-Van Kampen, $\pi_1(\Sigma_k\times \Sigma_1^1-N_2(f,g))$ is generated by $\{\tilde x_1,\tilde y_1,\cdots, \tilde x_k,\tilde y_k, \tilde a_2,\tilde b_2\}$. The following relations hold:\begin{align}
		[\tilde a_2,\tilde y_1]=[\tilde b_2,\tilde y_1] = [\tilde b_2,\tilde x_1\tilde y_1\tilde x_1^{-1}]&=1  \label{eq1}\\ 
		\label{eq2} [\tilde a_2,\tilde x_i]=[\tilde a_2,\tilde y_i] = [\tilde b_2,\tilde x_i]=[\tilde b_2,\tilde y_i]&=1 \text{ for } i>1.  
	\end{align} We apply one final round of Seifert-Van Kampen to see that $\pi_1(F\times G-N,(f,g))$ is generated by the set given in the lemma statement, and that the neighborhood boundary curves are as in \eqref{bd1}-\eqref{bd2}. Equations \eqref{eq1} and \eqref{eq2} ensure that $[\tilde b_2,\tilde y_i]$ is nullhomotopic in $F\times G-N$ for any $i$.
\end{proof}

\section{Irreducible copies of $R_{a,b}$ when $a$ is even}\label{section:even_costructions}

We give various methods to construct irreducible copies of $R_{a,b}$ for even $a$, covering all but finitely many $(a,b)$--pairs. We realize many $(a,b)$--coordinates by applying the $\Z_2$--construction to a family of manifolds which were first defined in \cite{BK}, then used to fill out a large part of the simply--connected geography plane in \cite{ABBKP}. For completeness, we reintroduce this family. These $4$--manifolds are built from four telescoping triples which serve as building blocks. We start by defining the following ``subblocks,'' from which the telescoping triples are obtained. In what follows, we let $X_{n\text{ L.S.}}$ denote a manifold obtained through $n$ Luttinger surgeries on $X$.

\begin{align*}
	M &= T^2\times \Sigma_2\\
	W_1 &= T^2\times S^2 \# 4\CPbar{2} \\
	W_2 & = T^2\times T^2\# 2\CPbar{2}\\
	Z_g & = (\Sigma_2\times \Sigma_g)_{2g \text{ L.S. }}
\end{align*}

In our description of each subblock, we denote by $\{x_i,y_i\}$, and $\{a_i,b_i\}$, the symplectic basis for the integral first homology of the first surface and the second surface in this product, respectively. We drop the subscript whenever the surface is a torus, and there is just one pair of generators. We define the following surfaces up to homology, which will be used for fiber summing and torus surgery: $M$ contains $ \Sigma_{h'} := \{pt\} \times \Sigma_2$ and $T_1 := x\times a_1, T_2 := y\times a_1,  T_3 := x\times a_2, T_4 := y\times a_2$. Next, $W_1$ contains a genus 2 surface $F_1$ obtained by taking two parallel copies of $T^2\times \set{pt}$ and one $\set{pt} \times S^2$ in $T^2\times S^2$. After resolving the two intersection points, this is an embedded genus two surface with self-intersection four. After blowing up four times, we obtain the embedded genus two surface $F_1$ in $W_1$, which has a trivial normal bundle. Similarly, $W_2$ has a genus two surface $F_2$ which is obtained by resolving and blowing up the surface $\{pt\} \times T^2 \cup T^2 \times \{pt\}$ in $T^4$. In addition, $W_2$ contains the tori $T'_1 = x \times a$ and $T'_2 = y \times a$. Note that the number of blow ups is conveniently arranged so that the self-intersection of each $F_i$ is zero and the complement is relatively minimal. Finally, $Z_g$ is obtained from $\Sigma_2\times\Sigma_g$ by performing Luttinger surgeries on the $2g$ lagrangian tori $y_1\times a_i$ and $y_2\times b_i$. We will restrain from specifying the surgery coefficient and the surgery curve unless it is required for an argument. Now we are ready to construct the building blocks $B, B_g, C$, and $D$.
\begin{align*}
	B & = \left(\Fibersumd{W_2}{M}{F_2=\Sigma_{h'}}\right)_{4 \text{ L.S.}}\\ 
	B_g & = \left(\Fibersumd{B}{Z_g}{F_B = \Sigma_2 \times \set{pt}}\right)\\
	C & = \left(\Fibersumd{W_1}{M}{F_1 = \Sigma_{h'}}\right)_{2 \text{ L.S.}} \\
	D & = \Fibersumd{W_1}{W_2}{F_1=F_2}, 
\end{align*}
where $F_B$ is a parallel copy of $\Sigma_{h'}$ in $M$. The Luttinger surgeries for $B$ are performed on $T'_1,T'_2 \subset W_2-F_2$ and $T_1,T_2 \subset M$. For $C$ the surgeries are performed on $T_1,T_2\subset M-\Sigma_{h'} \subset C$. All three of $B, B_g, C,$ are telescoping triples with respect to the pair $T_3, T_4 \subset M-\Sigma_{h'}$. $D$ is also a telescoping triple, with respect to $T'_1,T'_2 \subset W_2-F_2$. Going forward, we relabel the tori so that for $X\in \{B,B_g,C,D\}$, $T_1$ and $T_2$ are the homologically essential tori, and $(X,T_1,T_2)$ is a telescoping triple.

Recall that two telescoping triples can be symplectically fiber summed to give another telescoping triple. Moreover, one can always obtain a simply--connected manifold through Luttinger surgery on a telescoping triple \cite[Proposition 4]{ABBKP}. Through a combination of symplectic fiber summing and Luttinger surgery, a large family of irreducible simply--connected manifolds were built out of $B$, $B_g$, $C$, and $D$ in \cite{ABBKP}. In what follows, we apply the $\Z_2$-construction to this family.

\subsection{$\mathbf{(b_2^+,b_2^-) = (2\chi, 10\chi-c)}$ for $\mathbf{0< c\leq 8\chi-2}$ (except finitely many points)} \label{section:big_region}

The overarching idea in this subsection is the following: take an irreducible, simply-connected $4$-manifold $Y$ with first chern number $c$ and holomorphic Euler characteristic $\chi$, then perform the $\Z_2$-construction to $Y$ along an embedded genus two surface. The resulting $4$-manifold $X$ will have $(b_2^+,b_2^-)=(2\chi,10\chi-c)$. During this $\Z_2$-construction, $c_1^2$ and $\chi_h$ will transform by $c\mapsto c+4$ and $\chi\mapsto \chi+\frac{1}{2}$. If $X$ has order two fundamental group and both $X$ and its double cover are not spin, $X$ is homeomorphic to $R_{2\chi,10\chi-c}$. Given the constraints $0<c \leq 8\chi -2$, this subsection fills in the geography plane above the line $\sigma=-2$ and strictly below the line $c=4$.

We construct $Y$ out of telescoping triples as in \cite[Theorem 22]{ABBKP}. In that proof, they start with a pair of integers $(c,\chi)$ satisfying $0\leq c\leq 8\chi -2$. The proof is split in two cases depending on the parity of $c$. For us, the case when $c=0$ will be addressed with a separate argument in \ref{sec:small_c}, so here we assume $0<c\leq 8\chi -2$. Before we start the constructions, we make some observations which will come in handy for $\pi_1$ computations.

\begin{rmk}
	For a 4-manifold $M$ with an embedded square zero surface $\Sigma_g$, $\mu_{\Sigma_g}$ is trivial in $M-\Sigma_g$ if and only if there exists an immersed sphere $S\subset M$ which intersects $\Sigma_g$ transversely at one point.
	
	\label{rmk:trivial_meridian}
\end{rmk}

\begin{lemma}\label{lem:meridian_fiber_sum}
	Let $M = \Fibersumd{M_1}{M_2}{\Sigma_g}$ and $\Sigma_g^{\parallel} \subset M$ be a disjoint push-off of $\Sigma_g$ in $M_1-\Sigma_g$. If there are immersed spheres $S_i \subset M_i$ intersecting $\Sigma_g$ transversally at a single point, then $\mu_{\Sigma_g^{\parallel}}$ is trivial in $M-\Sigma_g^{\parallel}$. 
	
\end{lemma}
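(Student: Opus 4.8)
The plan is to produce an immersed sphere in $M$ that meets $\Sigma_g^{\parallel}$ transversally in exactly one point, and then to invoke Remark \ref{rmk:trivial_meridian}. Note first that $\Sigma_g^{\parallel}$, being a parallel copy of the square-zero surface $\Sigma_g$, again has square zero, so the Remark applies to it. Since the fiber sum is taken along square-zero surfaces, the boundary $\partial\nu(\Sigma_g)$ is a trivial circle bundle $\Sigma_g\times S^1$, and $M$ is obtained by gluing $M_1\setminus\nu(\Sigma_g)$ to $M_2\setminus\nu(\Sigma_g)$ along this hypersurface by an orientation-reversing bundle map $\phi$. I would set things up so that $\Sigma_g^{\parallel}$ is a parallel copy $\Sigma_g\times\{q\}$ sitting in the $M_1$-side of $M$, at a radius strictly larger than the radius at which the gluing takes place.

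Next I would cut the given spheres along the gluing hypersurface. Because $S_i$ meets $\Sigma_g$ transversally once, near that point $S_i$ looks like a normal fiber disk $\{pt_i\}\times D^2$, so $D_i':=S_i\cap(M_i\setminus\nu(\Sigma_g))$ is a disk whose boundary is a meridian $\{pt_i\}\times S^1$ of $\Sigma_g$. In $M$, the images of $\partial D_1'$ and $\partial D_2'$ are two fibers of $\Sigma_g\times S^1$. Since any two fibers cobound an annulus lying over a path in $\Sigma_g$, I can join them by such an annulus $A\subset\Sigma_g\times S^1$, and then $\hat S:=D_1'\cup A\cup D_2'$ is an immersed sphere in $M$ (immersed only because the $S_i$ were, which does not affect the count below).

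Finally I would count intersections with $\Sigma_g^{\parallel}$. The piece $D_2'$ lies entirely in the $M_2$-side and the annulus $A$ lies on the gluing hypersurface, which sits at smaller radius than $\Sigma_g^{\parallel}$; hence neither meets $\Sigma_g^{\parallel}$. The piece $D_1'$ is the normal fiber disk $\{pt_1\}\times(\text{annular region})$, which crosses the level $\Sigma_g\times\{q\}$ transversally in the single point $(pt_1,q)$. Thus $\hat S$ meets $\Sigma_g^{\parallel}$ transversally in one point, and Remark \ref{rmk:trivial_meridian} yields that $\mu_{\Sigma_g^{\parallel}}$ is trivial in $M\setminus\Sigma_g^{\parallel}$.

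I expect the only delicate point to be the closing-up step: one must verify that the two disks $D_1'$ and $D_2'$ can genuinely be assembled into a single closed (immersed) sphere across the gluing, i.e.\ that their meridian boundaries, though they may sit over different points of $\Sigma_g$ and be interchanged by $\phi$, can be connected by an annulus. This is exactly where the triviality of the normal circle bundle and the fact that $\phi$ is fiber-preserving are used, since they guarantee that both boundary circles are honest fibers and that any two fibers cobound such an annulus. The secondary bookkeeping point is to keep the tube $A$ and the cap $D_2'$ disjoint from $\Sigma_g^{\parallel}$, which is arranged by the radius choice fixed at the outset.
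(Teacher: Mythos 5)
Your proof is correct and follows essentially the same route as the paper's: puncture each $S_i$ along the gluing hypersurface to get disks, tube them together with an annulus in $\partial\nu(\Sigma_g)\cong \Sigma_g\times S^1$, observe that the resulting immersed sphere meets $\Sigma_g^{\parallel}$ transversally in the single point contributed by the $M_1$-side disk, and conclude via Remark \ref{rmk:trivial_meridian}. The paper compresses this into three sentences; your version simply makes explicit the radius bookkeeping and the fiber-preserving nature of the gluing that the paper leaves implicit.
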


\begin{proof}
	Suppose $S_i \subset M_i$ are immersed spheres intersecting $\Sigma_g$ geometrically once. Then during the fiber sum operation they get punctured once and can be glued together with an annulus in $M$. Also $S_1$ will intersect $\Sigma_g^{\parallel} \subset M_1$ once, and so the same happens in $M$.
\end{proof}

\subsubsection{$c$ is even}
Let $(m,n)= (\frac{1}{2} c,\chi)$. By \cite[Lemma 21]{ABBKP}, there are non-negative integers $b, c, d, g, k$, with $b>0$ if $g>0$, such that $(m,n) = (3b+2c+d+4g, b+c+d+g+k)$. We define $Z$ as the following symplectic fiber sum, where all gluings are defined so that $Z$ is a telescoping triple.
\[
Z:= \begin{cases}
	 \left(\#_b B\right)\#\left(\#_c C\right)\#\left(\#_d D\right)  & \text{ if } g=0\\
	 B_g \#\left(\#_{b-1} B\right)\#\left(\#_c C\right)\#\left(\#_d D\right) & \text{ if } g\geq 1
\end{cases}
\]

Let $T_1$ and $T_2$ be the tori embedded in $Z$ so that $(Z,T_1,T_2)$ is a telescoping triple. If $k = 0$, let $Y$ be the simply--connected manifold obtained through Luttinger surgeries on $Z$. If $k>0$ and one of $b,c,d$ is positive, let $Y$ be obtained by performing $+1$ Luttinger surgery on $T_2$ in the fiber sum $\Fibersumd{Z}{E(k)}{T_1=\text{Fiber}}$.

By the proof of \cite[Theorem 22]{ABBKP}, $c_1^2(Y)= 2m$, $\chi_h(Y)=n$, and $Y$ is simply-connected, minimal, and symplectic. Moreover, $Y$ is a symplectic sum of at least one copy of $B$, $B_g$, $C$, or $D$. Each of these building blocks has a symplectically embedded genus two surface of square zero. Let $H$ be one such surface embedded in $Y$. To proceed, we apply the $\Z_2$-construction to $Y$ along $H$. We claim that the resulting manifold has an order two fundamental group. This follows from Proposition \ref{prop:z2_pi1_odd} as long as $\mu_H$ is trivial in $\pi_1(Y-H)$. To verify the triviality of $\mu_H$, we consider each telescoping triple $(X,T_1,T_2)$ for $X\in\{B,C,D,B_g\}$, relabeling each embedded genus two surface in $B, C, D, B_g$ as $H$. If $\mu_H$ is trivial in $X\setminus \nu(H \cup T_1\cup T_2)$, then $\mu_H$ will remain trivial after performing Luttinger surgeries and symplectic fiber sums along $T_1$ and $T_2$. After constructing $Y$ out of $X$ and taking its double along $H$, the resulting manifold will be simply--connected. Therefore, to show that the $\Z_2$-construction of $Y$ along $H$ has an order two fundamental group, we proceed to show that $\mu_H$ is trivial in $X\setminus \nu(H \cup T_1\cup T_2)$ for each $X\in \{B,C,D,B_g\}$. For the case when $X=B$, this has already been shown.

\begin{proposition}\cite[Theorem 7]{ABBKP}
	The meridian $\mu_H$ is trivial in $B-(H\cup T_1\cup T_2)$.
	\label{prop:B_meridian}
\end{proposition}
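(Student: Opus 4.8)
The plan is to establish that $\mu_H$ is nullhomotopic by a van Kampen computation inside $\pi_1\bigl(B-(H\cup T_1\cup T_2)\bigr)$, rather than by producing a single immersed sphere meeting $H$ once. The reason for avoiding a direct appeal to Lemma \ref{lem:meridian_fiber_sum} is that $H=F_B$ is a push-off of $\Sigma_{h'}$ inside $M=T^2\times\Sigma_2$, and since $M$ is aspherical we have $\pi_2(M)=0$; thus there is no immersed sphere in $M$ meeting $\Sigma_{h'}$ algebraically once, and the $M$-factor cannot supply the required sphere. Instead I would combine a disk coming from the $W_2$-side of the fiber sum with the product relations of $M$ and the two defining Luttinger relations that survive the excision of $T_1$ and $T_2$.

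First I would recall that $W_2=T^2\times T^2\#2\CPbar{2}$ and that $F_2$ is built by resolving $\{pt\}\times T^2\cup T^2\times\{pt\}$ and then blowing up, so one of the resulting exceptional spheres $E$ meets $F_2$ transversally in a single point. Then $E\setminus\nu(F_2)$ is a disk in $W_2\setminus\nu(F_2)$ bounding $\mu_{F_2}$, and this disk is disjoint from $H$ and from the tori. Under the fiber-sum gluing $\mu_{F_2}$ is identified with $\mu_{\Sigma_{h'}}$, so $\mu_{\Sigma_{h'}}=1$ in $\pi_1\bigl(B-(H\cup T_1\cup T_2)\bigr)$; this is exactly Remark \ref{rmk:trivial_meridian} applied on the $W_2$ side.

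Next I would use that $H=F_B$ and $\Sigma_{h'}$ are parallel square-zero genus two surfaces in $M=T^2\times\Sigma_2$ whose common normal direction is the $T^2$ factor. Writing $x,y$ for the generators of that $T^2$, the twice-punctured torus $T^2\setminus\{p_0,p_1\}$ supplies the boundary relation $\mu_{\Sigma_{h'}}\,\mu_{F_B}=[x,y]$ (up to orientation and basing, the product of the two puncture loops is the commutator of $x$ and $y$). Combined with $\mu_{\Sigma_{h'}}=1$ this gives $\mu_H=\mu_{F_B}=[x,y]$, so it remains to force $x$ and $y$ to commute. This is where the two Luttinger surgeries defining $B$ on the tori $x\times a_1$ and $y\times a_1\subset M$ enter: these tori are \emph{not} excised (only the telescoping tori $T_1=x\times a_2$, $T_2=y\times a_2$ are removed), so their relations are available. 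By the boundary data of Lemma \ref{lem:sigma_k_times_punctured_torus}, their meridians are $[\,\tilde b_1^{-1},\tilde y^{-1}\,]$ and $[\,\tilde x^{-1},\tilde b_1\,]$, and the surgeries set these equal to the chosen surgery curves; fed into the presentation together with the product commutation relations of $T^2\times\Sigma_2$, these should collapse the $T^2$-direction generators enough to yield $[x,y]=1$.

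The hard part is this final step. One must verify that precisely the relations surviving the excision of $T_1$ and $T_2$ — namely the two surgeries on $x\times a_1,\,y\times a_1$, the product relations of $T^2\times\Sigma_2$, and the van Kampen amalgamation across the $W_2$ fiber sum — are already enough to kill $[x,y]$, as opposed to the stronger relation set in which $T_1$ and $T_2$ are also surgered. One also has to check that the exceptional-sphere disk and the twice-punctured torus can be isotoped off $T_1\cup T_2$, so that every relation invoked genuinely holds in $\pi_1\bigl(B-(H\cup T_1\cup T_2)\bigr)$; this is what guarantees, per the reduction in the surrounding text, that $\mu_H$ remains trivial after the later fiber sums and Luttinger surgeries performed along $T_1$ and $T_2$.
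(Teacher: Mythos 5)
Note first that the paper does not actually prove this proposition: it is imported verbatim from \cite[Theorem 7]{ABBKP}, so your attempt is really an attempt to reprove that cited computation. Your skeleton is the right one, and your first two steps are correct: a punctured exceptional sphere in $W_2$ gives a disk in $W_2\setminus\nu(F_2)$ capping the circle fiber of the gluing hypersurface, so $\mu_{\Sigma_{h'}}=1$ in the complement; and since $H=F_B$ and $\Sigma_{h'}$ are parallel copies of $\{pt\}\times\Sigma_2$, a twice-punctured torus $(T^2\setminus\{p_0,p_1\})\times\{s\}$, with $s$ chosen off the curves used to build the tori, exhibits $\mu_H$ as conjugate to $[x,y]$. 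This is exactly the mechanism the paper itself invokes for $C$, and your observation that Lemma \ref{lem:meridian_fiber_sum} cannot be run with a sphere on the $M$ side (asphericity of $T^2\times\Sigma_2$ kills any candidate sphere) is also correct; that is why the paper reserves that lemma for $D$.

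The gap is your final step, and it is not a formality: it is the entire content of the cited theorem. You claim the two Luttinger relations on $x\times a_1$, $y\times a_1$ together with ``the product commutation relations of $T^2\times\Sigma_2$'' should force $[x,y]=1$, but the relevant product relations do not survive in $\pi_1\bigl(B-(H\cup T_1\cup T_2)\bigr)$. Concretely, $[\tilde b_1,\tilde y]$ is, up to inversion and conjugation, precisely the meridian of the surgered torus $x\times a_1$ (this is the boundary data of Lemma \ref{lem:sigma_k_times_punctured_torus}, with the roles of the factors swapped); any product torus that would exhibit $[\tilde b_1,\tilde y]=1$ must meet $x\times a_1$, since $y$ meets every parallel copy of $x$ and $b_1$ meets every parallel copy of $a_1$. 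So after the Luttinger filling this commutator equals $x^{\mp1}$, not $1$, and substituting the two surgery relations into each other merely cycles: $x=[\tilde b_1^{-1},\tilde y^{-1}]^{\mp1}$, $y=[\tilde x^{-1},\tilde b_1]^{\mp1}$, which in the group generated by $y,b_1$ subject to these relations does not yield $[x,y]=1$. What breaks the circularity in \cite{ABBKP} is the other side of the fiber sum: the two Luttinger surgeries on $T_1',T_2'\subset W_2-F_2$ and the gluing identification of $a_1,b_1,a_2,b_2$ with the $W_2$-side generators, whose commutation is controlled by the exceptional spheres. Your ingredient list omits these entirely, so as written the proof stops exactly where the cited proof begins.
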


We proceed by verifying the same for $C$, $D$, and $B_g$.

\begin{proposition}
	The meridian $\mu_H$ is trivial in $C-(H\cup T_1\cup T_2)$.
\end{proposition}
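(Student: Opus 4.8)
The plan is to compute $\pi_1\big(C-(H\cup T_1\cup T_2)\big)$ directly with the Seifert--van Kampen theorem, using the fiber-sum decomposition $C=\Fibersumd{W_1}{M}{F_1=\Sigma_{h'}}$ together with the two defining Luttinger surgeries, which are performed on the tori $x\times a_1$ and $y\times a_1$ inside $M=T^2\times\Sigma_2$. Here $H$ is the genus two surface of the building block, namely a parallel push-off of $\Sigma_{h'}$ lying in $M-\Sigma_{h'}$. I would first observe that Lemma \ref{lem:meridian_fiber_sum} cannot be applied as it stands: the factor $M=T^2\times\Sigma_2$ is aspherical, so it contains no immersed sphere meeting $\Sigma_{h'}$ once, and the meridian must instead be killed from the $W_1$ side and then trimmed down by the surgery relations. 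I would therefore split the argument into showing (a) that the meridian $\mu_{\Sigma_{h'}}$ of the fiber-sum surface dies, and (b) that, modulo this, $\mu_H$ reduces to the commutator of the two $T^2$--directions of $M$, which the Luttinger surgeries then kill.

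For (a), recall $W_1=T^2\times S^2\#4\CPbar{2}$ and that $F_1$ is homologous to $2[T^2\times\mathrm{pt}]+[\mathrm{pt}\times S^2]-E_1-E_2-E_3-E_4$, where the $E_i$ are the exceptional spheres. Since $E_i\cdot[F_1]=-E_i\cdot E_i=1$, each $E_i$ is an embedded sphere meeting $F_1$ transversally once, so Remark \ref{rmk:trivial_meridian} gives $\mu_{F_1}=\mu_{\Sigma_{h'}}=1$ in $\pi_1(W_1-F_1)$, hence in $\pi_1\big(C-(H\cup T_1\cup T_2)\big)$ after van Kampen. For (b), on the $M$--side the complement of the two parallel copies is $(T^2-\{2\text{ pts}\})\times\Sigma_2$, in which the two boundary loops satisfy $\mu_{\Sigma_{h'}}\,\mu_H=[x,y]$; combined with $\mu_{\Sigma_{h'}}=1$ this yields $\mu_H=[x,y]$.

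It then remains to show $[x,y]=1$, and this is exactly where the two Luttinger surgeries enter. Performed on $x\times a_1$ and $y\times a_1$ with the coefficients of \cite{ABBKP}, they impose relations of the form $x=[\tilde b_1^{-1},\tilde y^{-1}]$ and $y=[\tilde x^{-1},\tilde b_1]$ (compare the boundary data in Proposition \ref{prop:sigmak_times_sigma2} with $k=1$), while the fiber-sum gluing identifies the $\Sigma_2$--loops of $\Sigma_{h'}$ with curves on $F_1\subset W_1$ whose images in the abelian group $\pi_1(W_1)$ collapse the relevant handle generators. Feeding these relations back should force $x=y=1$, hence $[x,y]=\mu_H=1$, and then $\pi_1(X)\cong\Z_2$ follows from Proposition \ref{prop:z2_pi1_odd}. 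Since these are precisely the two surgeries occurring in $M$ for the block $B$, this step runs in parallel with Proposition \ref{prop:B_meridian} (that is, \cite[Theorem 7]{ABBKP}), the only change being that the role of $W_2$ is played by $W_1$, and all that is needed from $W_1$ is the existence of an exceptional sphere meeting $F_1$ once. I expect the main obstacle to be the bookkeeping in this last step: verifying that the surgery and gluing relations genuinely trivialize $x$ and $y$, and in particular that excising the telescoping tori $T_1,T_2$ (the tori along $a_2$, disjoint from the surgery tori along $a_1$) removes no relation needed for the collapse.
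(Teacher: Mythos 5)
Your reduction to the single identity $[x,y]=1$ is correct, and it is in substance the same first step as the paper's proof, which simply cites \cite[p.~17]{BK} for it: each exceptional sphere $E_i$ satisfies $E_i\cdot[F_1]=1$, so $\mu_{F_1}=1$ in $\pi_1(W_1-F_1)$ by Remark \ref{rmk:trivial_meridian}; the gluing then trivializes $\mu_{\Sigma_{h'}}$ in the fiber-sum complement, and on the $M$--side the two boundary circles of $(T^2-\{2\text{ pts}\})\times\Sigma_2$ multiply to $[x,y]$, giving $\mu_H=[x,y]$ up to conjugation. Your observation that Lemma \ref{lem:meridian_fiber_sum} is unavailable here (the aspherical factor $M$ contains no sphere meeting $\Sigma_{h'}$ once) is also correct; the paper invokes that lemma only for $D$.

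The last step, however, is not just unfinished bookkeeping --- as stated it is false. The group $\pi_1\bigl(C-(H\cup T_1\cup T_2)\bigr)$ surjects onto $\pi_1(C)\cong\Z^2$, since the removed pieces have codimension two, and condition (4) of the definition of a telescoping triple, applied to the excised torus $T_2=y\times a_2$, says that $y$ and $a_2$ form a basis of $\pi_1(C)$; hence $y$ cannot die in any group surjecting onto $\pi_1(C)$. In particular the two defining surgeries of $C$ cannot impose both $x=[\tilde b_1^{-1},\tilde y^{-1}]$ and $y=[\tilde x^{-1},\tilde b_1]$: those relations would kill both $x$ and $y$ in $H_1(C)$, the image of $\pi_1(T_2)$ in $\pi_1(C)$ would then have rank at most one, and $(C,T_1,T_2)$ would fail to be a telescoping triple at all --- contradicting \cite[Theorem 11]{ABBKP} and breaking Section \ref{sec:odd_constructions}, where surgery on $T_2$ is exactly what turns the surviving $\Z$--generator into $\Z_2$. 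So your assumed form of the second surgery relation is wrong (that surgery is not along $y$), and the conclusion $x=y=1$ is unattainable. What is true, and what the paper's proof uses, is the weaker and sufficient statement that $x$ alone dies: by the computation on \cite[p.~17]{BK}, the single $-1$ Luttinger surgery on $x\times a_1$ along $x$ already trivializes the commutator $[x,y]$, while $y$ survives as a generator of $\pi_1(C)$. Your skeleton becomes correct if the collapse targets only $x$, but verifying that step requires the actual surgery curves and gluing identifications from \cite{BK} --- which is precisely the content the paper's one-line proof outsources by citation.
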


\begin{proof}
	$C$ is defined in \cite[Theorem 11]{ABBKP} as a manifold from the proof of \cite[Theorem 10]{BK} without applying surgery on $T_3$, and $T_4$, assuming the notation of \cite{BK}. From the page 17 of \cite{BK} the meridian is homotopic to $[x,y]$. After $-1$ Luttinger surgery on $T_1$ along $x$, the commutator becomes trivial, and so does meridian.
\end{proof}

\begin{proposition}
	The meridian $\mu_H$ is trivial in $D-(H\cup T_1\cup T_2)$. \label{prop:D_meridian}
\end{proposition}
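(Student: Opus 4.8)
The plan is to recognize this as an essentially direct application of Lemma~\ref{lem:meridian_fiber_sum}, with the only extra bookkeeping being the two tori $T_1,T_2$. Recall that $D = \Fibersumd{W_1}{W_2}{F_1=F_2}$ carries \emph{no} Luttinger surgery in its definition, so the triviality of $\mu_H$ will come purely from the fiber-sum structure. After the relabeling of tori, $T_1$ and $T_2$ are the telescoping tori $T_1'=x\times a$ and $T_2'=y\times a$, both lying in $W_2-F_2$. The genus two surface $H$ along which we later perform the $\Z_2$-construction is a parallel push-off $F_1^{\parallel}$ of the fiber-summing surface $F:=F_1=F_2$, taken in $W_1-\nu(F_1)$; being homologous to $F_1$, it is again an embedded genus two surface of square zero in $D$. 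With this dictionary, the statement is exactly the conclusion of Lemma~\ref{lem:meridian_fiber_sum} applied with $M_1=W_1$, $M_2=W_2$, $\Sigma_g=F$, and $\Sigma_g^{\parallel}=H$, provided we can produce the immersed spheres the lemma requires.

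The key step is therefore to exhibit immersed spheres $S_1\subset W_1$ and $S_2\subset W_2$ meeting $F_1$, respectively $F_2$, transversally in a single point, so that Remark~\ref{rmk:trivial_meridian} applies on each side. For this I would use the exceptional spheres introduced when the $F_i$ were built. The surface $F_1$ is obtained by resolving $2(T^2\times\{pt\})\cup(\{pt\}\times S^2)$, a genus two surface of square four, and then blowing up four times to bring its square to zero; since each of these four blow-ups is performed at a smooth point of the surface, the proper transform meets each exceptional sphere $E_i\subset W_1$ transversally once, so any $E_i$ serves as $S_1$. Identically, $F_2$ is obtained by resolving $(\{pt\}\times T^2)\cup(T^2\times\{pt\})$, of square two, and blowing up twice, so either exceptional sphere of $W_2$ meets $F_2$ transversally once and serves as $S_2$.

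With these spheres in hand, Lemma~\ref{lem:meridian_fiber_sum} immediately gives that $\mu_H$ is nullhomotopic in $D-H$. The remaining work, which I expect to be the only genuine (though minor) obstacle, is to upgrade this to $D-(H\cup T_1\cup T_2)$ by checking that the immersed sphere built in the proof of that lemma---obtained by puncturing $S_1$ and $S_2$ and joining them by an annulus in $\partial\nu(F)$---can be taken disjoint from $T_1$ and $T_2$. This holds for three reasons: $S_1$ lives entirely in $W_1$, hence is disjoint from the tori $T_i'\subset W_2$; the exceptional sphere $S_2$ may be chosen with its blow-up point away from the product tori $x\times a$ and $y\times a$, which sit in the $T^4$ summand of $W_2$ and in fact in $W_2-\nu(F_2)$; and the connecting annulus lies in $\partial\nu(F)$, again disjoint from both $T_i'\subset W_2-F_2$. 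The resulting immersed sphere thus punctures to a disk in $D-(H\cup T_1\cup T_2)$ bounding $\mu_H$, which completes the argument.
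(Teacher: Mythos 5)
Your proof is correct and takes essentially the same approach as the paper: the paper's own proof is a one-line appeal to Lemma~\ref{lem:meridian_fiber_sum}, citing exactly the fact that $D$ is a fiber sum of $W_1$ and $W_2$, both of which contain exceptional spheres meeting the glued surfaces transversally once. Your extra bookkeeping—identifying $H$ as the push-off $F_1^{\parallel}$, and checking that the exceptional spheres and connecting annulus can be kept disjoint from $T_1$ and $T_2$—simply fills in details the paper leaves implicit.
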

\begin{proof}
	This is immediate from Lemma \ref{lem:meridian_fiber_sum} because $D$ is a fiber sum of manifolds both having exceptional spheres intersecting the glued surfaces.
\end{proof}

For $B_g$, recall that this is the fiber sum of $B$ and $Z_g$ along $F_B\subset B$ and $\Sigma\subset Z_g$. Let $F'$ be a push-off of the embedded genus two surface $F_B$ in $B$, so that $F'$ remains an embedded surface in $B_g$. Set $N=\nu(T_1\cup T_2\cup F')$. In order to make the $\Z_2$-construction on $B_g$ work, we need $\mu_{F'}$ to be trivial in $B_g\setminus N$.

\begin{proposition}
	The meridian of $F'$ is trivial in $\pi_1(B_g\setminus N)$.
\end{proposition}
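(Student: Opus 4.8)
The plan is to reduce the statement to a fundamental-group computation inside $Z_g$, in the spirit of Lemma \ref{lem:meridian_fiber_sum}. Write $B_g = (B\setminus \nu(F_B))\cup_\partial (Z_g\setminus\nu(\Sigma))$, where $\Sigma = \Sigma_2\times\{pt\}\subset Z_g$ is the gluing surface. The push-off $F'$ and the tori $T_1,T_2$ (which lie in $M-\Sigma_{h'}\subset B$) all sit in the $B$-side piece $B\setminus\nu(F_B)$, so the entire $Z_g$-side piece $Z_g\setminus\nu(\Sigma)$ is disjoint from $N$. Since $F'$ is parallel to $F_B$, the meridian $\mu_{F'}$ is isotopic to $\mu_{F_B}$ through the product collar between the two parallel copies, and under the fiber-sum gluing $\mu_{F_B}$ is identified with the meridian $\mu_\Sigma$ of $\Sigma$; this isotopy can be kept away from $T_1,T_2$ and $F'$. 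Hence it suffices to produce a nullhomotopy of $\mu_\Sigma$ inside $Z_g\setminus\nu(\Sigma)$: such a disk lies entirely in the $Z_g$-side piece, hence in $B_g\setminus N$, and kills $\mu_{F'}$ there.

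It therefore remains to show that $\mu_\Sigma$ is trivial in $\pi_1(Z_g-\Sigma)$. I would first arrange the basepoint so that the surgery tori $y_1\times a_i$ and $y_2\times b_i$ are disjoint from $\Sigma=\Sigma_2\times\{pt\}$. Before surgery $Z_g-\Sigma$ deformation retracts onto $\Sigma_2\times(\Sigma_g-pt)$, so its fundamental group is $\pi_1(\Sigma_2)\times F(a_1,b_1,\dots,a_g,b_g)$, and $\mu_\Sigma$ is precisely the boundary relator $\prod_{i=1}^g[a_i,b_i]$ carried by the punctured section $\{pt\}\times(\Sigma_g-pt)$. I would then read off the relations introduced by the $2g$ Luttinger surgeries exactly as in \cite{BK07} and \cite{ABBKP}: each surgery along $y_1\times a_i$ (resp.\ $y_2\times b_i$) equates the relevant base generator to a commutator built from the dual curves, so that each $a_i$ and $b_i$ is expressed as a commutator in $\pi_1(Z_g-\Sigma)$. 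Substituting these relations into $\prod_i[a_i,b_i]$ should then reduce the relator to the identity, giving $\mu_\Sigma=1$ and completing the argument.

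The main obstacle is this final $\pi_1$ computation in $Z_g-\Sigma$. Unlike the case of $D$ (and the exceptional-sphere half of $B$), where the meridian is killed immediately by genuine $(-1)$-spheres meeting the gluing surface, $Z_g$ contains no such spheres, so the vanishing of $\mu_\Sigma$ must be extracted from the Luttinger surgery relations themselves. The delicate points are bookkeeping the surgery curves and coefficients fixed in \cite{ABBKP} (which we do not re-select), keeping the basepoint consistent under the re-basing by the arcs $\beta_i$, and verifying that $\prod_i[a_i,b_i]$ genuinely collapses rather than merely becoming nullhomologous. Once this is checked, the reduction in the first paragraph yields triviality of $\mu_{F'}$ in $B_g\setminus N$, as required.
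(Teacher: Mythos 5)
Your proposal breaks down at its central step: the claim that $\mu_\Sigma$ can be nullhomotoped inside $Z_g\setminus\nu(\Sigma)$ alone. The Luttinger relations coming from \cite[Corollary 9]{ABBKP} have the form $a_i=[x_1,b_i]$ and $b_i=[x_2,a_i]$, where $x_1,x_2$ are generators of $\pi_1(\Sigma_2)$, i.e.\ curves in the \emph{fiber} direction of the gluing surface. These curves do not die in $\pi_1(Z_g\setminus\nu(\Sigma))$ --- they are even homologically essential there, since $H_1(Z_g\setminus\nu(\Sigma))\cong\Z^4$ is generated by $x_1,y_1,x_2,y_2$ --- and expressing each $a_i,b_i$ as a commutator involving nontrivial elements does nothing to collapse $\prod_i[a_i,b_i]$: substitution just yields $\prod_i\bigl[[x_1,b_i],[x_2,a_i]\bigr]$, which is not a relator. (One can in fact build representations of the visible presentation of $\pi_1(Z_g\setminus\nu(\Sigma))$ into a Heisenberg-by-free group, sending $y_j\mapsto 1$ and $x_1,x_2$ to suitable transvections, under which every commutation and surgery relation holds but $\prod_i[a_i,b_i]$ survives.) Note also that if your claim were true, then by Remark \ref{rmk:trivial_meridian} $Z_g$ would contain an immersed sphere meeting $\Sigma$ transversely once, and the whole proposition would follow in one line from Lemma \ref{lem:meridian_fiber_sum}; the paper's proof is long precisely because no such sphere exists on the $Z_g$ side. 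The correct mechanism is irreducibly two-sided: under the gluing $\tilde\phi$ the curves $x_1,x_2$ are identified with $\tilde x_1,\tilde x_2\subset F'$, and these bound immersed disks in $B\setminus\nu(F_B\cup F'\cup T_1\cup T_2)$ by \cite[Theorem 7]{ABBKP} (the disks in $B\setminus\nu(F_B\cup T_1\cup T_2)$ can be chosen disjoint from the parallel copy $F'$). Only after $x_1=x_2=1$ do the surgery relations force $a_i=b_i=1$, whence $\mu_{F'}=\bigl(\prod_i[a_i,b_i]\bigr)^{-1}=1$ in $\pi_1(B_g\setminus N)$.

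There is a second, smaller gap at the start: you identify $\mu_{F'}$ with $\mu_{F_B}$ by an isotopy ``through the product collar between the two parallel copies,'' but no such isotopy exists once \emph{both} copies are removed. In the local model $F\times D^2$ with $F_B=F\times\{0\}$ and $F'=F\times\{q\}$, the two meridians are the two distinct free generators of the $\pi_1\bigl(D^2\setminus\{0,q\}\bigr)$ factor and are not conjugate in the complement of both surfaces; this is exactly the phenomenon flagged in the Remark following this proposition in the paper. The identification $\mu_{F_B}=\mu_{F'}$ must instead be produced by the immersed sphere $S$ furnished by Proposition \ref{prop:B_meridian}: since $[F']=[F_B]$, $S$ meets each surface once, and puncturing $S$ at the two intersection points yields an annulus in the complement whose boundary components are the two meridians.
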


\begin{proof}
	Let $\phi\colon\Sigma\to F$ be the diffeomorphism described in \cite[Corollary 9]{ABBKP}, so that $\tilde \phi:= c_{S^1}\times \phi$ is the gluing map for the symplectic fiber sum forming $B_g$, where $c_{S^1}$ is the complex conjugation map. Then \[
	B_g\setminus N =  B\setminus \nu(F_B\cup F'\cup T_1\cup T_2)\cup_{\tilde \phi} Z_g\setminus \nu(\Sigma). \]
    From Seifert-Van Kampen, we see that $\pi_1(B_g\setminus N)$ is a quotient of $\pi_1(B\setminus \nu(F_B\cup F'\cup T_1\cup T_2))*\pi_1( Z_g\setminus \nu(\Sigma))$. Since $\mu_{F_B}$ is trivial in $\pi_1(B\setminus \nu(F_B\cup T_1\cup T_2))$ by Proposition \ref{prop:B_meridian}, there exists an immersed sphere $S$ in $B\setminus \nu(T_1\cup T_2)$ intersecting $F_B$ in a point. Since $F_B$ and $F'$ are homologous, $|S\cdot F'|=1$ as well. After removing neighborhoods of $F_B$ and $F'$, $S$ becomes an annulus, showing that $\mu_{F_B}=\mu_{F'}$ in $\pi_1(B_g\setminus \nu(F_B\cup F'\cup T_1\cup T_2)).$
	
	 Since $\tilde \phi$ is complex conjugation on the $S^1$-factor, $\mu_{F_B}$ is identified with $\mu_\Sigma^{-1}$ in the quotient space $\pi_1(B_g\setminus N)$, hence so is $\mu_{F'}$. Let $x_1,y_1,x_2,y_2$ denote the standard generators of $\pi_1(\Sigma)$ for $\Sigma \subset Z_g$ and let $\{a_i,b_i\}_{i=1}^g$ generate the fundamental group of $\{pt\}\times \Sigma_g\subset Z_g$. Note that the Luttinger surgery in $Z_g$ gives relations between these generators of $\pi_1(Z_g)$, see \cite[Corollary 9]{ABBKP}. Then $\mu_\Sigma = [a_1,b_1]\cdots [a_g,b_g]\in \pi_1(Z_g\setminus \nu(\Sigma))$, so $\mu_{F'} = \left([a_1,b_1]\cdots [a_g,b_g]\right)^{-1}\in \pi_1(B_g\setminus N)$. Let $\tilde x_1, \tilde y_1, \tilde x_2, \tilde y_2$ be the symplectic generators of $\pi_1(F')$. By \cite[Theorem 7]{ABBKP}, $\tilde x_1 = \tilde y_1 = \tilde x_2 = 1\in \pi_1(B\setminus \nu(F_B\cup T_1\cup T_2))$. Then there is an immersed disk in $B\setminus \nu(F_B\cup T_1\cup T_2)$ bounded by $\tilde x_1$. Since $F'$ is a push-off of $F_B$, this disk can be assumed to be disjoint from $F'$ as well, so $\tilde x_1= 1 \in \pi_1(B\setminus \nu(F_B\cup F'\cup T_1\cup T_2))$. The same logic applies for $\tilde x_2$. Since $\tilde \phi$ maps $x_i\mapsto \tilde x_i$, $x_1$ and $x_2$ are trivial in $\pi_1(B_g\setminus N)$. Then from the Luttinger surgeries in $Z_g$, we have the relations $[x_1,b_i]=a_i$ and $[x_2,a_i]=b_i$. Since $x_1=x_2=1$ in $\pi_1(B_g\setminus N)$, these relations reduce to $a_i=b_i=1$. Therefore $\mu_{F'}=\left([a_1,b_1]\cdots [a_g,b_g]\right)^{-1}=1\in \pi_1(B_g\setminus N)$.
	 
\end{proof}

\begin{rmk}
	One can also work out that although $\mu_{F'}=1$ in $\pi_1(B_g\setminus N)$, $\mu_{F'}$ is {\it nontrivial} in $\pi_1(B \setminus \nu(F_B\cup F'\cup T_1\cup T_2)).$ This is typically the case when two parallel copies of an embedded surface are removed.
\end{rmk}


This concludes our argument that $Y$ has a symplectically embedded genus two surface $H$ such that $\mu_H$ is trivial in $Y-H$. Thus after applying the $\Z_2$-construction to $Y$ along $H$,  the resulting manifold has an order two fundamental group. The last thing to verify is that the $\Z_2$-construction of $Y$ along $H$ will have an odd intersection form.

\begin{proposition} \label{prop:odd_int_form}
	$B$, $C$, $D$, and $B_g$ all have embedded surfaces of odd self-intersection disjoint from the embedded $H$ and the telescoping tori.
\end{proposition}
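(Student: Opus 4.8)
The plan is to reduce, for each block, to exhibiting an embedded surface of odd square that is disjoint from every surface and torus along which that block is assembled, and then to observe that such a surface is automatically disjoint from $H$. Recall that each block is built from the product pieces $T^2\times S^2$, $T^4$, $T^2\times\Sigma_2$, $\Sigma_2\times\Sigma_g$ together with the $\overline{\mathbb{CP}^2}$ summands of $W_1$ and $W_2$. The product pieces carry only square-zero surfaces, so the only source of odd self-intersection is an exceptional sphere $E$ (with $E^2=-1$) coming from one of these $\overline{\mathbb{CP}^2}$'s. The difficulty is that these exceptional spheres are exactly the ones used to reduce the square of the fiber-sum surfaces $F_1\subset W_1$ and $F_2\subset W_2$ to zero, so each $E$ meets the relevant fiber-sum surface once and is cut by the fiber sum. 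Moreover there is a genuine parity obstruction to using exceptional spheres alone: any class $\alpha=\sum c_iE_i$ with $\alpha\cdot[F_1]=0$ satisfies $\sum c_i=0$ (since $E_i\cdot[F_1]=1$), whence $\alpha^2=-\sum c_i^2\equiv\sum c_i=0\pmod 2$ is even. Thus any odd-square class orthogonal to the fiber-sum surface must involve a product class.

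The key construction is therefore to tube an exceptional sphere to a square-zero product surface. In $W_1$, take $E:=E_1$ and $P:=T^2\times\{\mathrm{pt}\}$ chosen disjoint from the blow-up points, so that $E$ and $P$ are disjoint, $E^2=-1$, $P^2=0$, and $E\cdot[F_1]=P\cdot[F_1]=1$. Reversing the orientation of $P$ and applying the resolution procedure from the proof of Proposition \ref{prop:z2_pi1_odd} to the two oppositely signed intersection points of $E\sqcup\bar P$ with $F_1$, we obtain an embedded surface $S$ disjoint from $F_1$ with $[S]=[E]-[P]$ and $[S]^2=E^2+P^2-2\,E\cdot P=-1$, which is odd. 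The same recipe works in $W_2$ using $E:=E_1'$ and the torus $P:=\{\mathrm{pt}\}\times T^2$, with $\mathrm{pt}$ placed off the $x$- and $y$-circles so that $S$ is also disjoint from the Luttinger tori $T_1'=x\times a$ and $T_2'=y\times a$.

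It remains to check, block by block, that $S$ is disjoint from $H$ and from the telescoping tori; in each case this holds because $S$ lives entirely in one $W$-factor while $H$ and the telescoping tori lie elsewhere. For $B$ and $B_g$ the surface $S\subset W_2$ sits on the opposite side of the fiber sum from $H$ (a push-off $F_B$, respectively $F'$, of $\Sigma_{h'}$ into the $M$-factor) and from the telescoping tori $T_3,T_4\subset M$, so disjointness is immediate; for $B_g$ the surface survives the further fiber sum with $Z_g$ since it avoids $F_B$. For $C$ the surface $S\subset W_1$ is disjoint from the push-off of $\Sigma_{h'}$ into $M$ and from the telescoping and Luttinger tori $T_1,T_2\subset M$. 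For $D$ we take $S\subset W_1$ and $H$ a parallel copy of the fiber-sum surface pushed to the $W_2$ side, along with the telescoping tori $T_1',T_2'\subset W_2$, so again $S$ is disjoint from all of them.

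The main obstacle is the sign and disjointness bookkeeping in the middle step: one must verify that the two intersection points of $E\sqcup\bar P$ with the fiber-sum surface carry opposite signs (so that they can be paired and resolved) and that the resolving annulus, which lies in $\partial\nu(F_1)$, can be taken in the complement of the telescoping and Luttinger tori. Once the representative $S$ is in hand, disjointness from $H$ is the easy ``far side'' observation above, and feeding $S$ into Proposition \ref{prop:z2_pi1_odd} then yields the odd intersection form for the corresponding $\Z_2$-construction.
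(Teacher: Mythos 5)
Your proof is correct, and it produces essentially the same homology class as the paper's proof, but by a different geometric realization. The paper also works inside a single $W_i$: it assumes one of the four (resp.\ two) blow-ups of $W_1$ (resp.\ $W_2$) is made at a point of the sphere (resp.\ torus) component of the pre-resolved surface, away from the resolution points, and takes $K$ to be the \emph{proper transform} of the product torus $T^2\times\{pt'\}$ passing through that blow-up point. Then $[K]^2 = 0-1 = -1$, disjointness from $H$ is automatic because blowing up a transverse intersection point separates the two proper transforms, and disjointness from the telescoping tori is arranged by keeping $pt'$ off the curves $a_i$. Your class $[E]-[P]$ is exactly $-[K]$, so the two arguments differ only in the representative: the paper gets an embedded torus in one step with no tubing, whereas you reverse the orientation of $P$ and invoke the resolution procedure of Proposition \ref{prop:z2_pi1_odd}, which is what forces the extra bookkeeping you flag as the main obstacle. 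That bookkeeping does go through --- $E$ and $P$ each meet $F_1$ transversely and positively in one point, so after reversing $P$ the signs are opposite, and since the relevant tori are compact and disjoint from $F_1$, a sufficiently small tubular neighborhood $\nu(F_1)$ keeps the resolving annulus away from them --- but the proper-transform trick eliminates this step entirely. What your write-up adds that the paper does not have is the parity observation that no integral combination of exceptional classes alone can be both odd and orthogonal to $[F_1]$; this cleanly explains why a product class must enter the construction, in either version of the argument.
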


\begin{proof}
	Since all $B$, $C$, $D$, $B_g$ contain either $W_1-H$ or $W_2-H$, we will show that there is a torus $K$ in $W_i$ of self-intersection $-1$. Assume that one of the blow ups on $W_1$ is made on $\{pt\} \times \{pt'\} \subset \{pt\} \times S^2$ far from the resolution points. Then consider the torus $K$ given by the proper transform of $T^2 \times \{pt'\}$. Its self-intersection is $$[K]^2=[T^2\times \{pt'\}]^2 -1 = -1.$$
	It is disjoint from $H$ because of the blow up, and disjoint from the telescoping tori as long as $\{pt'\}$ does not lie on the curves $a_i$, which we can assume by perturbing $a_i$ if needed. The argument for $W_2$ is the same.
	
\end{proof}

It now follows from Proposition \ref{prop:Rab_topological_type} that the $\Z_2$-construction of $Y$ along $H$ is homeomorphic to $R_{2\chi, 10\chi-c}$. Moreover, Usher's theorem implies that this copy of $R_{2\chi, 10\chi-c}$ is minimal. This realizes all coordinates of the form $(2\chi, 10\chi-c)$ for $0<c\leq 8\chi -2$ and even  $c$. We next turn to the case when $c$ is odd.

\subsubsection{$c$ is odd \label{sec:c_odd}}

We make use of the even case and some new blocks. Note that $c = c_1^2(Y) = 3\sigma(Y)+2e(Y)$, so $\sigma(Y)$ is always odd. Taking the double of $Y$ and then its quotient guarantees that signature of the double is not divisible by 16 and the signature of the quotient is not divisible by 8. Hence, we do not need to search for odd surfaces as in the even case, and the non-spinness of both manifolds is automatic.

\underline{$1\leq c \leq 8\chi -17$.} Let $(c',\chi') = (c-1,\chi-2)$. If $c'\neq 0$, use the previous section to construct the telescoping triple $(Z, T_1,T_2)$ corresponding to the pair $(c',\chi')$. If $k=0$, do $+1$ surgery on $T_1$ and perturb $T_2$ to make it symplectic, then take a fiber sum along $T_2$ with Gompf's manifold $S_{1,1}$ along the embedded torus $F_1\subset S_{1,1}$ \cite[Lemma 5.5]{GompfNew}. If $k\geq 1$, then instead of performing Luttinger surgery on $T_1$, take the symplectic sum along $T_1$ with $E(k)$. The new manifold has invariants $(c,\chi)$. If $c'=0$, then take $Z=E'(k)$ from \cite{GompfNuclei}. Take the symplectic fiber sum of $E'(k)$ and $S_{1,1}$ along embedded tori with trivial normal bundles. The resulting manifold has invariants $(c,\chi)$. By \cite[Lemma 5.5]{GompfNew}, $S_{1,1}$ has an embedded genus two surface $F_2$ of square zero such that $S_{1,1}-(F_1\cup F_2)$ is simply connected. Therefore in all cases described, there is a genus two surface which we can perform the $\Z_2$-construction along to obtain a manifold with order two fundamental group.

\underline{$7 \leq c \leq 8\chi - 11$.} Let $(c',\chi') = (c-7,\chi-2)$. If $c'\neq 0$, obtain $(Z,T_1,T_2)$ corresponding to $(c',\chi')$. Repeat the argument above replacing $S_{1,1}$ with $X_{3,12}$ from \cite[Section 7]{ABBKP}. The new manifold has invariants $(c,\chi)$. Any integral lattice points with $c'=0$ were covered in the previous case, because all pairs $(7,\chi)\in\Z^2$ satisfying $7\leq 8\chi - 11$ also satisfy $7\leq 8\chi -17$. 

\underline{$21 \leq c \leq 8 \chi -5$.} If $c\neq 21$, repeat arguments by replacing $X_{3,12}$ with $P_{5,8}$ from \cite[Remark 1]{ABBKP}. Similar to the previous case, we do not have to worry about when $c=21$, since all integral lattice points satisfying $c=21\leq 8\chi -5$ are covered by the argument for $7 \leq c \leq 8\chi - 11$. The last region to cover is the line $21\leq c=8\chi -3$, which is realized by manifolds with $\sigma =-3$. We were unable to reach this part of the geography plane using $B$, $C$, $D$, and $B_g$, so we introduce a new construction.

\noindent {\bf Current progress of lattice points.} Before moving on to the construction for $\sigma=-3$, we pause to list the $b_2^{\pm}$ of manifolds {\it not} reached by our construction so far. The above argument populates the $\Z_2$ geography plane strictly above the $\sigma=-1$ line, given by $b_2^- = b_2^+ + 1$, and strictly below the $c_1^2 = 4$ line, given by $b_2^- = 5b_2^+$. (Again remember that we are only dealing with the coordinates with even $b_2^+$ in this section). The points strictly between the lines $\sigma=-1$ and $c_1^2 = 4$ which we miss are (1) the line $\sigma = -3$, and (2) the points that \cite[Theorem 22]{ABBKP} misses, with the $\Z_2$-construction applied to them. The proof of \cite[Theorem 22]{ABBKP} misses 13 coordinates. The $\Z_2$-construction applied to each of them gives the following list of coordinates, which our proof so far omits: \begin{align*}
	&(2,9),(2,7),(2,5)\\
	&(4,19),(4,17),(4,15),(4,13),(4,11),(4,9),(4,7)\\
	& (6,15),(6,13),(6,11)
\end{align*}
In summary, we have populated the $\Z_2$ geography plane with $b_2^+$ even, $\sigma \leq 0$, and $c_1^2\geq 0$ {\it except} for the thirteen $(b_2^+,b_2^-)$ coordinates listed above, as well as the coordinates with $c_1^2\leq 4$ and $\sigma = -3$. Our paper is not concerned with the lines $\sigma =0$ and $\sigma = -1$, which were handled in \cite{baykur2024smooth}. We next tackle the line $\sigma = -3$. 

\subsubsection{Irreducible $R_{2+2k,5+2k}$, where $k\geq 1$, i.e. $\sigma = -3$}·\label{sec:sigma_3} We use a genus-2 Lefschetz fibration over $S^2$ from 
\cite[Theorem 7]{Bay_Kork} with the total space $S^2 \times T^2 \# 3\CPbar{}$. We will call it $N_0$. Let $N_k$ be a fiber sum of $N_0$ with a trivial genus-2 fibration over $\Sigma_k$, which admits a genus two Lefschetz fibration over $\Sigma_k$. Since $N_k$ is relatively minimal and $k>0$, $N_k$ is minimal by \cite{Stip_chern}. Its algebraic invariants are $e(N_k)=e(N_0) + e(\Sigma_2 \times \Sigma_k)+4=3+4k$ and $\sigma(N_k)= \sigma(N_0)+\sigma(\Sigma_2 \times T^2)= -3$, which are the same as of $\CPsum{(2k-1)}{(2k+2)}$. Although $\pi_1(N_k)$ is nontrivial, we can obtain a simply--connected manifold by performing Luttinger surgery on $\Sigma_2 \times \left(\Sigma_k-D^2\right)\subset N_k$, where $D^2$ contains all critical values of $f_k$. We will call the resulting manifold $N_k'$. In what follows, we give a precise definition of all the Luttinger surgeries involved, then argue these surgeries kill the fundamental group. From this and Proposition \ref{prop:Luttinger_minimal}, it follows that $N_k'$ is an irreducible copy of $\CPsum{(2k-1)}{(2k+2)}$.

Let $x_i,y_i$ and $a_i,b_i$ be standard generators of the base and fiber of $f_k$, respectively. Let $X_i$, $Y_i$, $A_i$, and $B_i$ be parallel copies of $x_i$, $y_i$, $a_i$, and $b_i$. Let $Y'_1$ and $A'_1$ be additional push-offs that are disjoint from $Y_1$ and $A_1$, respectively. Perform the following Luttinger surgeries:
\begin{align}
	(A_2\times Y_1,a_2, 1) \label{eq:sigma=3_1}\\
	(B_2\times Y'_1,b_2,1)\label{eq:sigma=3_2} \\
	(A_1\times X_i,x_i,1) \label{eq:sigma=3_3}\\
	(A'_1\times Y_i,y_i,1). \label{eq:sigma=3_4}
\end{align} In the tuples above, the first coordinate is the torus where the surgery is performed, the second coefficient is the surgery curve along which $D^2\times T^2$ is re-glued, and the third coordinate is the surgery coefficient. All framings are assumed to be Lagrangian. All of these tori are disjoint from a generic regular fiber of $f_k$, and we denote a surviving regular fiber by $F$. Instead of showing that $N_k'$ is simply-connected, we will show that $N_k'-F$ is simply-connected, so that we guarantee the simply-connectedness of the double when applying the $\Z_2$-construction to $N_k'$.

To compute $\pi_1(N'_k-F)$, first we argue what happens when we remove the Lagrangian tori from $N_k$. Let $\mathcal N$ be a union of neighborhoods of the tori in (\ref{eq:sigma=3_1})-(\ref{eq:sigma=3_4}), so that $\pi_1(N_k')$ is a quotient of $\pi_1(N_k - \mathcal N)$. Note that $\mathcal N$ is the same as $N$ in Proposition \ref{prop:sigmak_times_sigma2}, which we will rely on in the computations below. We can think of $N_k$ as derived from the product space $\Sigma_2\times \Sigma_k$ with added $2$-handles coming from the monodromy of $f_k$. $\pi_1(N_k)$ is therefore generated by $\{a_1,b_1,a_2,b_2,x_1,y_1,\cdots x_k,y_k\}$. By \cite[Theorem 2]{BK07} and its extension in Proposition \ref{prop:sigmak_times_sigma2}, $\pi_1(N_k-\mathcal N)$ is normally generated by this same collection of loops. The relations $b_1b_2=1$ and $a_1a_2^2b_2^4=1$ come from Lefschetz vanishing cycles for $f_k$ \cite[3.2]{Bay_Kork}, and these survive the removal of $\mathcal N$ and so remain relations in $\pi_1(N_k-\mathcal N)$.

Proposition \ref{prop:sigmak_times_sigma2} shows that $[b_2,y_1]=1$ in $\pi_1(N_k-\mathcal N)$, and since $\pi_1(N_k')$ is a quotient of $\pi_1(N_k-\mathcal N)$, this relation holds in $\pi_1(N_k')$ as well. When we remove the fiber $F$, we are left with a Lefschetz fibration over a punctured $\Sigma_k$. This does not affect the normal generating set, and the above relations $b_1b_2=a_1a_2^2b_2^4=[b_2,y_1]=1$ still hold in $\pi_1(N_k'-F)$. When we glue copies of $D^2\times T^2$ to $N_k-\mathcal N$ to complete the Luttinger surgeries, we add $\pi_1$ relations $\eta_T\mu_{T}=1$, where $\eta_T$ is the surgery curve on $T$. By Proposition \ref{prop:sigmak_times_sigma2}, the meridian of the torus $A_1\times X_1$ in (\ref{eq:sigma=3_3}) is $[b_1^{-1},y_1^{-1}]$. As mentioned before, $b_1b_2=1$ and $[b_2,y_1]=1$, therefore $[b_1,y_1]=1$. So the surgery curve $x_1$ of (\ref{eq:sigma=3_3}) is trivial. This will imply that all the meridians of (\ref{eq:sigma=3_1}) and (\ref{eq:sigma=3_2}) are trivial, because they contain $x_1$ or $x_1^{-1}$. Then this implies that corresponding surgery curves $a_2$ and $b_2$ are trivial. Since $b_1b_2=1$, $b_1$ is also trivial. But $b_1$ takes place in the commutators corresponding to the meridians of (\ref{eq:sigma=3_3}) and (\ref{eq:sigma=3_4}), so the surgery curves $x_i$ and $y_i$ are also trivial. This shows that all the normal generators $x_i,y_i,a_i,b_i$ are trivial. Therefore $\pi_1(N_k'-F)=1$. Then if we apply the $\Z_2$-construction on $N'_k$ along $F$, we get an irreducible copy of $R_{2+2k,5+2k}$.

\subsection{Small $\mathbf{c \in \set{0,1,2,3,4}}$  \label{sec:small_c}}

\subsubsection{$R_{2k, 10k+4}$ for $k\geq 1$ (i.e. $c=0$)}

There exists a genus $g\geq 2$ Lefschetz fibration $\pi$ on $\CPsum{}{(4g+5)}$ whose monodromy is the hyperelliptic involution on $\Sigma_g$, see chapter 3 of \cite{BKS}. We perform the $\Z_2$-construction to $\CPsum{}{(4g+5)}$ along a regular fiber $F$ of $\pi$ to obtain $X_g$, whose algebraic invariants are $e(X_g)=6g+6$, $\sigma(X_g)=-4g-4$, and $c_1^2(X_g)=0$. The $2$--handles coming from the monodromy of $\pi$ ensure that $\CPsum{}{(4g+5)}-F$ is simply-connected, so $X_g$ has an order two fundamental group. Now, an {\it untwisted} fiber sum of $\CPsum{}{(4g+5)}$ with itself along $F$ is diffeomorphic to $E(g+1)$, which is irreducible. (By contrast, the fiber-sum performed during the $\Z_2$-construction is twisted.) This shows that the pair $(\CPsum{}{(4g+5)},F)$ is irreducible, so $X_g$ is also irreducible. When $g$ is even, the signature of $X_g$ ensures that both $X_g$ and its universal cover have odd intersection forms. It follows that for $g=2k$, $X_g$ is an irreducible copy of $R_{2k,10k+4}$.

\subsubsection{$R_{2k,10k}$ for $k\geq 6$ (i.e. $c = 4$) \label{sec:2k_10k}} 

We begin by establishing some notation. Let $n\geq 2$ and $\pi_n\colon E(n)\to S^2$ denote the elliptic fibration on $E(n)$ with $12n$ Lefschetz singularities. Let $S_n$ denote the section of $\pi_n$ with self-intersection $-n$. To realize the $c_1^2=4$ line, we take a symplectic fiber sum of two elliptic surfaces in a {\it non-fiber-preserving} way.

First observe that the elliptic surface $E(4)$ has a symplectic embedded genus two surface $\Sigma$ with a trivial normal bundle, obtained by symplectically resolving the union of the section $S_4$ and two regular fibers of $\pi_4$. We claim that there exists a homologically essential Lagrangian torus in $E(4)$ disjoint from $\Sigma$. To see this, let $\gamma$ be a curve on the base diagram for $E(4)$ encircling twelve critical values of $\pi_4$, such that the monodromy associated to $\gamma$ is $(t_at_b)^6$, where $a$ and $b$ are the symplectic generators for $H_1(T^2;\Z)$. Then $\pi_4^{-1}(\gamma)$ is an embedded copy of $S^1\times F$, with $F$ a regular $T^2$ fiber. Set $\Lambda = \gamma \times a \subset \pi_4^{-1}(\gamma)$, where $a$ is the symplectic $H_1$ generator on $F$. This is a Lagrangian torus, and we claim that it is disjoint from $\Sigma$; the fact that $\Lambda$ can be made disjoint from regular fibers is clear immediately. To see that it is disjoint from $S_4$, observe that we may regard $\gamma$ as the curve where a fiber sum is taken between $E(1)$ and $E(3)$ to form $E(4)$. Let $F_1$ be the regular fiber of $E(1)$ where the fiber sum is taken. We may perturb the $(-1)$--section of $E(1)$ so that it is trivial, i.e. it restricts to $S^1\times \set{pt}$ on $\partial\nu(F_1)=\pi_4^{-1}(\gamma)\cong S^1\times F$. This ensures that the section will not intersect $\Lambda$ when the fiber sum is taken, as long as $pt \notin a$, which we can always assume to be true. Therefore $\Lambda$ is disjoint from $S_4$ and regular fibers of $\pi_4$, so is disjoint from $\Sigma$.

To see that $\Lambda$ is homologically essential, observe that $\pi_4$ restricted to the interior of $\gamma$ is a Lefschetz fibration over $D^2$ with $a$ and $b$ as vanishing cycles. The curve $b$ on $F\subset \pi_4^{-1}(\gamma)$ is a vanishing cycle, and so bounds a disk which will intersect $\Lambda$ transversely at a point. The same is true for $\pi_4$ restricted to the exterior of $\gamma$. The union of these two disks is a $(-2)$--sphere intersecting $\Lambda$ transversely at a point, showing that it is homologically essential. Moreover, this $(-2)$--sphere is disjoint from the regular fibers and the $(-4)$--section of $\pi_4$ \cite[Lemma 3.1.10]{GompfStip}, and so it is disjoint from $\Sigma$. With this established, we may perturb the symplectic form on $E(4)$ to assume that $\Lambda$ is symplectic.

Next let $E(n)_2$ be obtained from the elliptic surface $E(n)$ by performing a $2/1$--torus surgery along a regular fiber. $E(n)_2$ is simply-connected because $E(n)$ has a section. In \cite[Section 2]{GompfNuclei}, Gompf shows that the the former is minimal and has an odd intersection form. Let $T$ be a regular fiber of $E(n)_2$ and let $H$ be the surface of odd self-intersection in $E(n)_2$. Set $Y$ to be the symplectic fiber sum of $E(4)$ and $E(n)_2$ along $\Lambda$ and $T$, where after identifying $\nu(\Lambda)$ and $\nu(T)$ with $D^2\times T^2$, the gluing is given by the identity. The genus two surface $\Sigma$ survives the fiber sum, so we consider it a submanifold of $Y$. We claim that $Y$ has odd intersection form: as discussed in the previous paragraph, there is a $-2$-sphere intersecting $\Lambda$ transversely. For any intersection point between $H$ and $T$, we tube a copy of this $-2$ sphere with $H$ in the fiber sum to obtain a surface $\hat H$ with odd self-intersection in $Y$. Note that $\hat H$ is disjoint from $\Sigma$ because the $-2$-spheres are disjoint from $\Sigma$ in $E(4)$. Next we claim that $Y$ is simply-connected. Recall that $\Lambda$ intersects a sphere, so $\pi_1(E(4)-\Lambda)$ is trivial. Now, since $E(n)_2$ is simply-connected, $\pi_1(E(n)_2-T)$ is normally generated by $\mu_T$. By Seifert-Van Kampen, this meridian is null-homotopic in the fiber sum, confirming that $Y$ is simply--connected. Moreover, $Y-\Sigma$ is also simply-connected, which can be seen by describing an immersed sphere $S$ in $E(4)$ intersecting $\Sigma$ geometrically once, and far away from $\Lambda$. For example, construct $S$ as a generic fiber with a meridian and a longitude pinched using vanishing cycles. This sphere $S$ would intersect the section $S_4$ that forms part of $\Sigma$.

Now, if we perform the $\Z_2$-construction to $Y$ along $\Sigma$, we obtain a 4-manifold $X$ with $b_2^+=2n+8$, $b_2^-=10n+40$, and an order two fundamental group. Usher's theorem implies that $Y$ is minimal, hence so is $X$. Since the odd surface $H$ is disjoint from $\Sigma$, $X$ also has an odd intersection form. It follows that $X$ is an irreducible  copy of $R_{2n+8,10n+40}.$

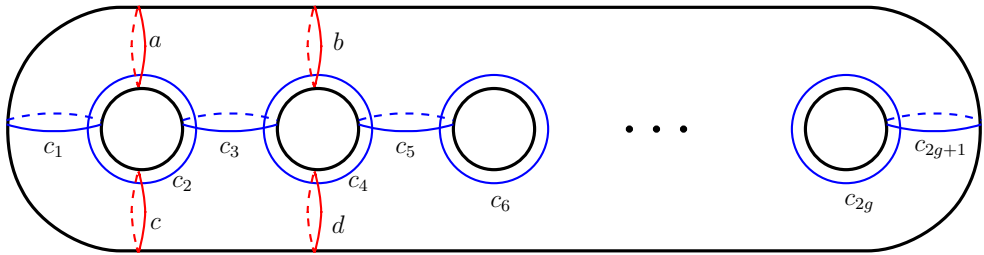
\begin{figure}[h!]
	\begin{tikzpicture}[scale=1.5]
		
		\begin{scope} [xshift=0cm, yshift=0cm, scale=0.6] 
			\draw[very thick,rounded corners=25pt] (6.5,1.8) --(-6.5,1.8) --(-7.45, 0)--(-6.5,-1.8) -- (6.5,-1.8)--(7.45,0) --cycle;
			\draw[very thick, xshift=-5.2cm] (0,0) circle [radius=0.6cm];
			\draw[very thick, xshift=-2.6cm] (0,0) circle [radius=0.6cm];
			\draw[very thick, xshift=0cm] (0,0) circle [radius=0.6cm];
			\draw[very thick, xshift=5.2cm] (0,0) circle [radius=0.6cm];
			\filldraw[very thick, xshift=2cm] (0,0) circle [radius=0.03cm];
			\filldraw[very thick, xshift=2.4cm] (0,0) circle [radius=0.03cm];
			\filldraw[very thick, xshift=2.8cm] (0,0) circle [radius=0.03cm];
			\draw[thick, blue, xshift=-5.2cm] (0,0) circle [radius=0.8cm];
			\draw[thick, blue, xshift=-2.6cm] (0,0) circle [radius=0.8cm];
			\draw[thick, blue, xshift=0cm] (0,0) circle [radius=0.8cm];
			\draw[thick, blue, xshift=5.2cm] (0,0) circle [radius=0.8cm];
			\draw[thick, blue, rounded corners=8pt, xshift=-7.8cm, yshift=0.1cm]             (0.6,-0.03) -- (0.9,-0.13)--(1.7,-0.13) --(2,-0.03);
			\draw[thick,  blue, dashed, rounded corners=8pt, xshift=-7.8cm, yshift=0.1cm] (0.6,0.03) -- (0.9,0.13)--(1.7,0.13) --(2,0.03);
			\draw[thick, blue, rounded corners=8pt, xshift=-5.2cm, yshift=0.1cm]             (0.6,-0.03) -- (0.9,-0.13)--(1.7,-0.13) --(2,-0.03);
			\draw[thick,  blue, dashed, rounded corners=8pt, xshift=-5.2cm, yshift=0.1cm] (0.6,0.03) -- (0.9,0.13)--(1.7,0.13) --(2,0.03);
			\draw[thick, blue, rounded corners=8pt, xshift=-2.6cm, yshift=0.1cm]             (0.6,-0.03) -- (0.9,-0.13)--(1.7,-0.13) --(2,-0.03);
			\draw[thick,  blue, dashed, rounded corners=8pt, xshift=-2.6cm, yshift=0.1cm] (0.6,0.03) -- (0.9,0.13)--(1.7,0.13) --(2,0.03);
			\draw[thick, blue, rounded corners=8pt, xshift=5.2cm, yshift=0.1cm]             (0.6,-0.03) -- (0.9,-0.13)--(1.7,-0.13) --(2,-0.03);
			\draw[thick,  blue, dashed, rounded corners=8pt, xshift=5.2cm, yshift=0.1cm] (0.6,0.03) -- (0.9,0.13)--(1.7,0.13) --(2,0.03);
			\draw[thick, red, rounded corners=8pt, xshift=-5.25cm, yshift=0.25cm, rotate = 90,scale = 0.75]             (0.6,-0.03) -- (0.9,-0.13)--(1.7,-0.13) --(2,-0.03);
			\draw[thick, red, dashed, rounded corners=8pt, xshift=-5.3cm, yshift=0.25cm, rotate = 90,scale = 0.75]             (0.6,-0.03) -- (0.9,0.07)--(1.7,0.07) --(2,-0.03);
			\draw[thick, red, rounded corners=8pt, xshift=-2.65cm, yshift=0.25cm, rotate = 90,scale = 0.75]             (0.6,-0.03) -- (0.9,-0.13)--(1.7,-0.13) --(2,-0.03);
			\draw[thick, red, dashed, rounded corners=8pt, xshift=-2.7cm, yshift=0.25cm, rotate = 90,scale = 0.75]             (0.6,-0.03) -- (0.9,0.07)--(1.7,0.07) --(2,-0.03);
			
			\draw[thick, red, rounded corners=8pt, xshift=-5.25cm, yshift=-2.2cm, rotate = 90,scale = 0.75]             (0.6,-0.03) -- (0.9,-0.13)--(1.7,-0.13) --(2,-0.03);
			\draw[thick, red, dashed, rounded corners=8pt, xshift=-5.3cm, yshift=-2.2cm, rotate = 90,scale = 0.75]             (0.6,-0.03) -- (0.9,0.07)--(1.7,0.07) --(2,-0.03);
			
			\draw[thick, red, rounded corners=8pt, xshift=-2.65cm, yshift=-2.2cm, rotate = 90,scale = 0.75]             (0.6,-0.03) -- (0.9,-0.13)--(1.7,-0.13) --(2,-0.03);
			\draw[thick, red, dashed, rounded corners=8pt, xshift=-2.7cm, yshift=-2.2cm, rotate = 90,scale = 0.75]             (0.6,-0.03) -- (0.9,0.07)--(1.7,0.07) --(2,-0.03);
			

			\node[scale=0.8] at (-6.5,-0.3) {$c_1$};
			\node[scale=0.8] at (-3.9,-0.3) {$c_3$};
			\node[scale=0.8] at (-1.3,-0.3) {$c_5$};
			\node[scale=0.8] at (6.6,-0.3) {$c_{2g+1}$};
			\node[scale=0.8] at (-4.6,-0.8) {$c_2$};
			\node[scale=0.8] at (-2,-0.8) {$c_4$};
			\node[scale=0.8] at (0.1,-1.1) {$c_6$};
			\node[scale=0.8] at (5.4,-1.1) {$c_{2g}$};
			\node[scale=0.8] at (-5,1.3) {$a$};
			\node[scale=0.8] at (-2.3,1.3) {$b$};
			\node[scale=0.8] at (-5,-1.4) {$c$};
			\node[scale=0.8] at (-2.3,-1.4) {$d$};

		\end{scope}
	\end{tikzpicture}
	\caption{Curves on $\Sigma_g$. } \label{fig:chain_of_curves}
\end{figure}

\subsubsection{$R_{g,5g+4-k}$ for all $g\geq 2$, $k\in \{1,2,3\}$ (i.e. $c\in \set{1,2,3}$)} \label{sec:c_1_2_3}

In this section we will take a fiber-reversing double of a genus $g\geq 2$ Lefschetz fibration with hyperelliptic positive factorization, apply several lantern relations, and then quotient by an involution. See \ref{section:fiber_reversing_double} for a reminder of the fiber-reversing construction. Before beginning, we clarify some notation: we let $t_1,t_2,\cdots t_{2g+1}$ denote the $2g+1$ mapping classes in $\Mod{(\Sigma_g)}$ which are Dehn twists around the maximal $2g+1$ chain of curves $c_i$ on $\Sigma_g$ as depicted in Figure \ref{fig:chain_of_curves}. When we write $t_i^{j}$, where $i$ and $j$ are integers, we mean the Dehn twist $t_i$ raised to the $j$--power. On the other hand, when we write $t_i^{t_j}$, we mean $t_jt_it_j^{-1}$.

In \cite[Section 3]{BKS}, the authors describe a genus $g$ Lefschetz fibration on $X_g=\CPsum{}{(4g+5)}$ with a hyperelliptic positive factorization $$(t_1t_2\cdots t_{2g}t_{2g+1}^2t_{2g}\cdots t_1).$$
They show that this factorization is Hurwitz equivalent to $W_g=A_gt_1^{2g+2}t_3^{2g+2},$ where $A_g = t_1^{t_2}t_2^{t_3}t_3^{t_4}\cdots t_{2g}^{t_{2g+1}} t_{2g+1}^{t_{2g}}\cdots t_4^{t_3} t_3^{t_2t_3^{2g+2}}t_2^{t_1t_3^{2g+2}}$. We conjugate $W_g$ with $\phi$, which is any diffeomorphism taking $(c_1,c_3)$ to $(a,b)$, which are shown in Figure \ref{fig:chain_of_curves}. Then we get positive factorization $A_g^{\phi}(t_at_b)^{2g+2}$, which has the same total space $X_g$. By Proposition \ref{prop:reverse_LF}, the fiber-reversing double $DX_g = X_g\cup X_g$ admits a Lefschetz fibration with positive factorization $$V_g= A_g^{\phi}(t_at_b)^{2g+2}(t_ct_d)^{2g+2}(\overline{A_g^{\phi}})^{r},$$
where $\overline{A_g^{\phi}}$ is the inverse of $A_g^\phi$ and $r$ is the reflection map sending the curves $a,b$ to $c,d$ respectively. Note that $a,b,c,$ and $d$ bound a four-punctured sphere. Since $W_g\in \Mod(\Sigma_g)$ can be represented by $\text{id}_{\Sigma_g} \in \text{Diff}(\Sigma_g)$, which of course commutes with $r$, by Corollary \ref{cor:reverse_LF_involution}, there is an involution $\iota'$ on $DX_g$ which interchanges two copies of $X_g$ and restricts to $a\times r$ on the gluing region, which we identify with $S^1\times \Sigma_g$.

\begin{figure}
	\begin{tikzpicture}
		
		\draw[thick] (0,0) ++(-30:3) arc[start angle=-30, end angle=30, radius=3];
		\draw[thick] (0,0) ++(60:3) arc[start angle=60, end angle=120, radius=3];
		\draw[thick] (0,0) ++(150:3) arc[start angle=150, end angle=210, radius=3];
		\draw[thick] (0,0) ++(240:3) arc[start angle=240, end angle=300, radius=3];
		\draw[thick, rotate=45] (2.9,0) ellipse [x radius=5pt, y radius=22pt];
		\draw[thick, rotate=135] (2.9,0) ellipse [x radius=5pt, y radius=22pt];
		\draw[thick, rotate=225] (2.9,0) ellipse [x radius=5pt, y radius=22pt];
		\draw[thick, rotate=315] (2.9,0) ellipse [x radius=5pt, y radius=22pt];
		
		\draw[thick, red, rotate = 45] (2.8,-1.1) .. controls (2.5,0) .. (2.8,1.1);
		\draw[thick, red, rotate = 45, dashed] (2.8,-1.1) .. controls (2.2,0) .. (2.8,1.1);
		\node[red] at (1.3,3) {$b$};
		
		\draw[thick, red, rotate = 135] (2.8,-1.1) .. controls (2.5,0) .. (2.8,1.1);
		\draw[thick, red, rotate = 135, dashed] (2.8,-1.1) .. controls (2.2,0) .. (2.8,1.1) ;
		\node[red] at (-1.3,3) {$a$};

		\draw[thick, red, rotate = 225] (2.8,-1.1) .. controls (2.5,0) .. (2.8,1.1) ;
		\draw[thick, red, rotate = 225, dashed] (2.8,-1.1) .. controls (2.2,0) .. (2.8,1.1) ;
		\node[red] at (-1.3,-3) {$c$};
		
		\draw[thick, red, rotate = 315] (2.8,-1.1) .. controls (2.5,0) .. (2.8,1.1) ;
		\draw[thick, red, rotate = 315, dashed] (2.8,-1.1) .. controls (2.2,0) .. (2.8,1.1) ;
		\node[red] at (1.3,-3) {$d$};
		
		\draw[thick, magenta] (0,-3) .. controls (0.5,0) .. (0,3);
		\draw[thick, magenta, dashed] (0,-3) .. controls (-0.5,0) .. (0,3);
		\node[magenta] at (0,3.3) {$x$};
		\draw[thick, olive, rotate = 90] (0,-3) .. controls (0.5,0) .. (0,3);
		\draw[thick, olive, dashed, rotate = 90] (0,-3) .. controls (-0.5,0) .. (0,3);
		\node[olive] at (3.3,0) {$y$};
		\draw[thick, cyan, dashed] (-2.8,1) .. controls (-1.2,1.2).. (-1,2.8);
		\draw[thick, cyan, dashed, rotate = 180] (-2.8,1) .. controls (-1.2,1.2).. (-1,2.8);
		\draw[cyan, thick] (1,-2.8)-- (-2.8,1);
		\draw[cyan, thick,rotate = 180] (1,-2.8)-- (-2.8,1);
		\node[cyan] at (1.1,1.1) {$z$};

	\end{tikzpicture}
	\caption{Dehn twist curves of the lantern relation}
	\label{fig:lantern}
\end{figure}

We want to apply the lantern substitution to $t_at_bt_ct_d$ several times, then use Corollary \ref{cor:reverse_LF_involution} to claim that even after these substitutions, the manifold still admits an orientation--preserving free involution. Let $t_at_bt_ct_d =t_xt_yt_z$ as in Figure \ref{fig:lantern}. Then after two lantern substitutions we get a new positive factorization
$$V_{g,2}= A_g^{\phi}(t_at_b)^{2g}(t_xt_yt_zt_xt_yt_z)(t_ct_d)^{2g}(\bar A_g^{\phi})^{r}.$$
Let $DX_{g,2}$ denote the total space of the Lefschetz fibration whose positive factorization is $V_{g,2}$. To obtain an involution on $DX_{g,2}$, first we show that $$t_xt_yt_zt_xt_yt_z=t_xt_yt_z(\overline{t_xt_yt_z})^r,$$
or equivalently 
$$t_xt_yt_zt_xt_yt_z=t_xt_yt_zt_{r(z)}t_{r(y)}t_{r(x)}.$$
Since $r(x)=x$ and $r(y)=y$, this is the same as showing $t_xt_yt_z = t_{r(z)}t_y t_x$, which is equivalent to the following:
\begin{align*}
		t_at_bt_ct_d &= t_{r(z)}t_yt_x\\	
		t_at_bt_ct_d &= t_xt_{r(z)}t_y\\
		t_xt_yt_z &= t_xt_{r(z)}t_y\\
		t_yt_z &= t_{r(z)}t_y\\
		t_yt_zt_y^{-1} &= t_{r(z)}\\
		t_{t_y(z)} &= t_{r(z)} \\
		t_y(z) &\sim r(z) \\
\end{align*} 
and the latter is easy to check. So we can rewrite the positive factorization as \[
	V_{g,2}= A_g^{\phi}(t_at_b)^{2g}(t_xt_yt_z)\left(\overline{A_g^{\phi}(t_at_b)^{2g}(t_xt_yt_z)}\right)^r = \mu_{g,2} \circ r \circ (\mu_{g,2})^{-1}\circ r^{-1},
\] where $\mu_{g,2} = A_g^{\phi}(t_at_b)^{2g}(t_xt_yt_z)$. Moreover, we can find a diffeomorphism representing $\mu_{g,2}$ that commutes with $r$: since $A_g^{\phi}(t_at_b)^{2g+2}=1$,  $\mu_{g,2}=(t_xt_yt_z)(t_at_b)^{-2}=t_b^{-1}t_a^{-1}t_ct_d.$ Since $a$ and $c$ are isotopic, this reduces to $t_b^{-1}t_d$. The diffeomorphism $t_b^{-1}t_d$ represents $\mu_{g,2}$, and we claim it commutes with $r$. This follows from the fact that both $rt_b^{-1}t_dr^{-1}$ and $t_b^{-1}t_d$ are supported in collar neighborhoods of $b$ and $d$, so it is sufficient to check that $rt_b^{-1}t_dr^{-1}=t_b^{-1}t_d$ in $\nu(b\cup d)$ which is straightforward. Hence, we may apply Corollary \ref{cor:reverse_LF_involution} to say that $DX_{g,2}$ admits an orientation--preserving free involution. As an aside, we see from Corollary \ref{cor:nontrivial_reverse_LF} that $DX_{g,2}$ is two copies of a Lefschetz fibrations over $D^2$ glued along their boundaries, which are mapping tori with $\mu_{g,2}$ monodromy.

This argument extends for more than two lanterns, and so shows that there is an involution on $DX_{g,2k}$ for $0\leq k \leq g+1$, where $2k$ corresponds to the number of lantern substitutions. To summarize, we have shown that the manifold $DX_{g,2k}$ admits a Lefschetz fibration with the following positive factorization:\[
V_{g,2k} = A_g^{\phi}(t_xt_yt_z)^k(t_at_bt_ct_d)^{2(g+1-k)}r\left(\overline{A_g^{\phi}(t_xt_yt_z)^{k}}\right)r^{-1}.
\] This is a fiber--reversing double of a Lefschetz fibration on $X_{g,2k}\to D^2$ whose monodromy is \[
A_g^{\phi}(t_xt_yt_z)^k(t_at_b)^{2(g+1-k)}.
\] The fiber--reversing double admits an orientation-preserving free involution, which is precisely the map $\iota$ that we quotient by.

Let $Y_{g,2k}$ be the quotient of $DX_{g,2k}$ by $\iota$. Since the number of Dehn twists in the positive factorization after a lantern substitution reduces by one, so does the Euler characteristic of the corresponding Lefschetz fibration. On the other hand, the signature increases by one \cite[Prop.3.12]{Endo2004SignatureOR}. For the sake of convenience, we list all the algebraic invariants of $Y_{g,2k}$: \begin{align*}
	&&e(Y_{g,2k})=6g+6-k, && \sigma(Y_{g,2k}) = -4-4g+k, && c_1^2(Y_{g,2k}) = k, \\ 
	&& b_2^+(Y_{g,2k}) = g, && b_2^- (Y_{g,2k}) = 5g+4-k, && \chi_h(Y_{g,2k}) = \frac{g+1}{2}.
\end{align*}

Restricting to $k=1,2,3$ gives us the points on the desired lines. We just need to check if the manifold is irreducible, has a fundamental group of order 2, and has the correct $w_2$--type. The fact that double cover is not spin is a simple consequence of Rokhlin's theorem. Indeed, for $k\in \{1,2,3\}$, $\sigma(Y_{g,2k}) \equiv 1,2,3\bmod{4}$. So its universal cover's signature is congruent to $2,4,6 \bmod 8$. But by Rokhlin, spin manifold's signature is divisible by 16. For irreducibility, $DX_g$ is symplectic, and is minimal by Usher's theorem. Since the lantern substitution amounts to a symplectic blowdown, $DX_{g,2k}$ is minimal \cite[Lemma 1.1]{Dorfmeister} and consequently irreducible \cite{Hamilton_2006}. Hence, $Y_{g,2k}$ is also irreducible.

For $\pi_1(Y_{g,2k}·)$, note that if $k \in\{1,2\}$, then all the vanishing cycles of $X_g$ are in $DX_{g,2k}$, which makes the latter simply-connected for any $g$. This also holds when $k=3$ and $g>2$. Hence whenever $(k,g)\neq (3,2)$, $Y_{g,2k}$ has order two fundamental group. When $(k,g)=(3,2)$, $DX_{g,2k}$ is the fiber-reversing double of a Lefschetz fibration whose positive factorization is \[
\left(t_1^{t_2}t_2^{t_3}t_3^{t_4}t_{4}^{t_5}t_{5}^{t_4}t_4^{t_3}t_3^{t_2(t_3)^6}t_2^{t_1(t_3)^6}(t_xt_yt_z)^3\right)^\phi.
\] It is no longer the case that all of the vanishing cycles of $X_2$ are in $DX_{2,6}$, since $X_2$ has $a$ and $b$ as vanishing cycles. Nevertheless, the subfactorization $t_1^{t_2}t_2^{t_3}t_3^{t_4}t_{4}^{t_5}t_{5}^{4}$ from above guarantees simply--connectedness of $DX_{2,6}$: This is because the first four vanishing cycles provide relations $c_2c_1^{-1}=c_3c_2^{-1}= c_4c_3^{-1}=c_5c_4^{-1}=1$ in the fundamental group, so that $\pi_1(DX_{2,6})$ is cyclically generated by $c_1$. Then the last vanishing cycle above gives the relation $c_4c_5=1$, meaning $c_1^2=1$. Since $\pi_1$  is abelian, it is isomorphic to $H_1(DX_{2,6},\Z)$. In $H_1(\Sigma_2;\Z)$, we have the relation $c_1+c_3+c_5=0$, which becomes $3c_1=0$. This together with $2c_1=0$ shows that $DX_{g,2k}$ is simply-connected. Hence $Y_{2,6}$ has order two fundamental group. This concludes our construction for the lines $c_1^2=1,2,3$. In particular, we have found symplectic, minimal copies of $R_{g,5g+4-k}$ for all $g\geq 2$, $k\in \{1,2,3\}$.

\noindent {\bf Current progress of lattice points.} At this point, the following coordinates with $b_2^+$ even remain missing:\begin{itemize}
	\item The aforementioned points missed in \ref{section:big_region}:\[
		(2,9),(2,7),(2,5), (4,19),(4,17),(4,15),(4,13),(4,11),(4,9),(4,7), (6,15),(6,13),(6,11).
	\]
	
	\item The first few points along the line $c_1^2=0$: $(2,10), (4,20), (6,30), (8,40), (10,50)$.
\end{itemize}

We fill in some of these missing points before moving on to tackle the $\Z_2$--geography with odd $b_2^+$.

\subsection{Additional missing points.}

\subsubsection{Irreducible $R_{4,19}$} In \cite[Example 5.4]{GompfNew}, Gompf constructs an irreducible copy of $\CPsum{3}{18}$ and denotes it $S_{1,1}$. This manifold contains an embedded genus two surface $F$ of square zero whose complement is simply-connected. We claim that the $\Z_2$-construction to $S_{1,1}$ along $F$ is an irreducible copy of $R_{4,19}$. The triviality of $\mu_F$ ensures that the $\Z_2$--construction gives a $4$--manifold with order two fundamental group. The signature is not divisible by eight, ensuring that the intersection form is odd. Irreducibility follows from Usher's theorem.

\subsubsection{Irreducible $R_{4,17}$.} In \cite[Example 5.10]{GompfNew}, Gompf constructs $R_{2,1}=\Fibersumd{P_2}{Q_1}{\Sigma_2}$, which is an irreducible copy of $\CPsum{3}{16}$. It contains a genus two surface $F$, the push-off of $\Sigma_2$ with a trivial normal bundle, such that $\pi_1(R_{2,1}\setminus \nu(F))=1$. This follows from the fact that $\Sigma_2$ intersects immersed spheres in both $P_2$ and $Q_1$. It holds for $P_2$ because $P_2-\Sigma_2$ is simply-connected \cite{GompfNew}. And it holds for $Q_1$ because of the exceptional sphere. By performing the $\Z_2$-construction on $R_{2,1}$ along $F$, we get a $4$-manifold with $(b_2^+,b_2^-)=(4,17)$. Since its signature is not divisible by eight, its intersection form is odd. Minimality follows from Usher's theorem. This confirms that the $\Z_2$-construction gives as irreducible copy $R_{4,17}$.

\subsubsection{Irreducible $R_{4,n}$ for $n \in \{ 7,9,11 \}$. \label{sec:b+=4}}

In \ref{section:big_region}, we saw that there are $4$-manifolds $B$, $C$, and $D$ which, after performing two Luttinger surgeries, are irreducible copies of $\CPsum{}{3}$, $\CPsum{}{5}$, and $\CPsum{}{7}$ respectively. Let $B'$, $C'$, and $D'$ denote these simply-connected exotic copies, which are studied extensively in \cite{BK}. For each $X\in \{B',C',D'\}$, $X$ contains a genus two surface $H$ with a trivial normal bundle, such that $X-H$ is simply--connected. In this section, we take the fiber sum of $X$ along $H$ with the a manifold $Z'$, which is defined in \cite[Section 4]{akhmedov2009exotic} and is obtained through one Luttinger surgery on $T^4\#\CPbar{}$. There exists a square zero genus two surface $\bar \Sigma$ in $Z'$. As discussed in \cite[Section 12]{akhmedov2009exotic}, the fiber sum $Y_k = X\#_{H=\bar \Sigma} Z'$ is an irreducible, symplectic copy of $\CPsum{3}{(k+3)}$ for $k\in\{3,5,7\}.$ (Here $k=3,5,7$ correspond to $X=B', C', D'$ respectively.) Let $H'$ be a push-off of $H\subset X$, which survives  as an embedded genus two surface in $Y_k$, and let $W_k$ be the $\Z_2$-construction of $Y_k$ along $H'$. The algebraic invariants of $W_k$ are $b_2^+(W_k)=4$ and $b_2^-(W_k)=k+4$. Irreducibility of $W_k$ follows from Usher's theorem, and since $\sigma(W_k)=k$ is odd, $W_k$ has odd intersection form. Hence if $\pi_1(W_k)\cong \Z_2$, then $W_k$ is an irreducible copy of $R_{4,k+4}$. To confirm this, we proceed by showing that $Y_k-H'$ is simply--connected.

From Seifert-Van Kampen, we see that $Y_k-H'$ is a quotient of the free product $\pi_1(X-(H\cup H'))*\pi_1(Z'-\bar \Sigma)$. Because $\pi_1(X-H)=1$, $\pi_1(X-(H\cup H'))$ is normally generated by the meridian $\mu_{H'}$, which is homotopic to $\mu_{H}$ and hence identified with $\mu_{\bar\Sigma}$ in $\pi_1(Y_k-H')$. On the other hand, the following information about $\pi_1(Z'-\bar \Sigma)$ is provided in \cite{akhmedov2009exotic}.

\begin{proposition}\cite[Corollary 7]{akhmedov2009exotic}
	$\pi_1(Z'-\bar \Sigma)$ is a quotient of \[
	\big\langle \alpha_1,\alpha_2,\alpha_3,\alpha_4, [\alpha_3,\alpha_4]^\phi\,\big|\, \alpha_3=[\alpha_1^{-1}, \alpha_4^{-1}], [\alpha_1,\alpha_3]=[\alpha_2,\alpha_3]=[\alpha_2,\alpha_4]=1 \big\rangle,
	\] where $[\alpha_3,\alpha_4]^\phi$ is any conjugate of $[\alpha_3,\alpha_4]$. If $\{a_1,b_1,a_2,b_2\}$ is the standard generating set for the push-off fundamental group $\pi_1(\bar \Sigma^{||})$, then the homomorphism induced by $\bar \Sigma^{||} \hookrightarrow Z'-\bar \Sigma$ maps $a_1\mapsto \alpha_1$, $b_1\mapsto \alpha_2$, and $b_2\mapsto \alpha_4$.
	
	\label{prop:Z'_pi1}
\end{proposition}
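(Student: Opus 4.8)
The plan is to recover the presentation by a direct Seifert--Van Kampen computation on the surgered manifold, exploiting the fact that ``quotient of'' only requires exhibiting a surjection from the displayed group onto $\pi_1(Z'-\bar\Sigma)$. Recall that $\bar\Sigma$ is the symplectic resolution of a horizontal and a vertical torus in $T^4$, pushed into $T^4\#\CPbar{}$, and that $Z'$ is the result of a single Luttinger surgery on a Lagrangian torus $\Lambda$ which may be taken disjoint from $\bar\Sigma$. I would therefore decompose
\[
Z'-\bar\Sigma \;=\; \bigl((T^4\#\CPbar{})-\nu(\bar\Sigma\cup\Lambda)\bigr)\ \cup_{\partial\nu\Lambda}\ (T^2\times D^2),
\]
so that $\pi_1(Z'-\bar\Sigma)$ is obtained from $\pi_1\bigl((T^4\#\CPbar{})-\nu(\bar\Sigma\cup\Lambda)\bigr)$ by adjoining the single relation coming from the regluing of $T^2\times D^2$ along $\partial\nu\Lambda$.

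First I would produce generators. The four circle factors of $T^4$ give loops whose images I label $\alpha_1,\alpha_2,\alpha_3,\alpha_4$, matching $a_1,b_1,a_2,b_2$ under the push-off inclusion, and the meridian of $\bar\Sigma$ furnishes one further generator $\mu$. Since a blow-up is a connected sum with the simply connected $\CPbar{}$, it introduces no new $\pi_1$ generators, and the exceptional sphere may be positioned so as to control $\mu_{\bar\Sigma}$; hence these five classes normally generate $\pi_1(Z'-\bar\Sigma)$. That $\mu=[\alpha_3,\alpha_4]^{\phi}$ is a conjugate of $[\alpha_3,\alpha_4]$ reflects the standard computation that the meridian of the horizontal torus $T^2_{12}$ in $T^4$ is the commutator $[\alpha_3,\alpha_4]$ of the two transverse directions, a class that the resolution and blow-up preserve up to conjugacy.

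Next I would verify the displayed relations. The commutators $[\alpha_1,\alpha_3]=[\alpha_2,\alpha_3]=[\alpha_2,\alpha_4]=1$ survive from the abelian relations of $\pi_1(T^4)=\Z^4$: each such pair of circle directions spans a torus that can be isotoped off both $\bar\Sigma$ and the normal disk of $\Lambda$, so the commuting relation persists in the complement. The remaining relation $\alpha_3=[\alpha_1^{-1},\alpha_4^{-1}]$ is precisely the surgery relation: regluing $T^2\times D^2$ identifies the surgery curve $\alpha_3$ with the meridian $\mu_\Lambda$, and in the product $T^4$ this meridian is read off as the commutator $[\alpha_1^{-1},\alpha_4^{-1}]$ of the two directions transverse to $\Lambda$. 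Finally, the inclusion of the push-off $\bar\Sigma^{\,||}$ sends its standard generators to the corresponding classes, yielding the stated correspondence $a_1\mapsto\alpha_1$, $b_1\mapsto\alpha_2$, $b_2\mapsto\alpha_4$.

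The main obstacle is bookkeeping rather than conceptual: one must simultaneously track the meridian of $\bar\Sigma$ and the Lagrangian framing of $\Lambda$, and confirm that the surgery relation takes exactly the form $\alpha_3=[\alpha_1^{-1},\alpha_4^{-1}]$ with the correct generators and inverses. Pinning down the geometric position of $\Lambda$ relative to the four circle directions, and checking that the push-off basing the surgery curve agrees with the one basing the surface generators, is the delicate point. Since the precise Lagrangian framings are established as Corollary~7 of \cite{akhmedov2009exotic}, I would match my generator conventions to theirs and invoke their surgery computation for the exact commutator rather than recomputing the framings from scratch.
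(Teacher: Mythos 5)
You should first be aware that the paper does not prove this proposition at all: it is imported verbatim as \cite[Corollary 7]{akhmedov2009exotic} and used as a black box. So your closing move --- ``match my generator conventions to theirs and invoke their surgery computation for the exact commutator'' --- is in practice exactly what the paper does; but as a blind \emph{proof} it is circular, since the computation you propose to invoke \emph{is} the statement being proved. That said, your Seifert--Van Kampen skeleton (generators from the four circle directions of $T^4$ plus one meridian, commutator relations $[\alpha_1,\alpha_3]=[\alpha_2,\alpha_3]=[\alpha_2,\alpha_4]=1$ from commutator tori that can be pushed off $\bar\Sigma\cup\Lambda$, and the surgery relation $\alpha_3=[\alpha_1^{-1},\alpha_4^{-1}]$ from regluing $T^2\times D^2$ with the Lagrangian framing) is the standard route behind such presentations, and it correctly predicts which three commutators survive and why the pairs $\{1,2\}$, $\{3,4\}$, $\{1,4\}$ are excluded.

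The genuine gap is your geometric model of $\bar\Sigma$. You take it to be the resolution of a horizontal torus $T_{12}$ and a vertical torus $T_{34}$, ``pushed into'' $T^4\#\CPbar{}$, with the blow-up playing no essential role. That surface has square $2$, whereas $\bar\Sigma$ must have square $0$; by adjunction, a symplectic square-zero genus-two class built from $[T_{12}]$, $[T_{34}]$ and the exceptional class $E$ is, up to symmetry, $[T_{12}]+2[T_{34}]-2E$, so the exceptional sphere and a doubled torus direction are necessarily built into $\bar\Sigma$. This is not mere bookkeeping: it is precisely why the proposition asserts the push-off dictionary only for $a_1,b_1,b_2$ and is silent about $a_2$. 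In the companion presentation quoted later in this same paper (from \cite[Lemma 12]{akhmedov2009exotic}, used for the $R_{1,4}$ and $R_{2,5}$ constructions, for the same surface $\bar\Sigma$), the fourth generator satisfies $\bar{a}_2^{\vert\vert}\mapsto\alpha_3^2$. Your argument, applied to the naive resolution, would instead give $a_2\mapsto\alpha_3$, which is false for the actual surface --- and the downstream $\pi_1$ computations in Section 6 of the paper genuinely depend on that $\alpha_3^2$. So your step ``the inclusion of the push-off sends its standard generators to the corresponding classes'' fails as stated; to repair it one must first pin down the correct surface (how it traverses the exceptional sphere and the doubled $\alpha_3$-direction) and only then compute the meridian and the push-off map. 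A smaller logical point: for $\pi_1(Z'-\bar\Sigma)$ to be a \emph{quotient} of the displayed group you need the five listed elements to generate it, not merely to normally generate it, which is all your sketch claims.
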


Hence $\pi_1(Y_k-H')$ is normally generated by $\alpha_1,\alpha_2,\alpha_3,$ and $\alpha_4$. The meridian $\mu_{H'}=\mu_{H}=\mu_{\bar\Sigma}$ is identified with a conjugate of $[\alpha_3,\alpha_4]$. We will continue to denote the standard generators of $\pi_1(H')$ by $G=\{a_1,b_1,a_2,b_2\}$, and let $i\colon \pi_1(H')\to \pi_1(X-(H\cup H'))$ be the inclusion-induced homomorphism. (We are abusing notation slightly by conflating $H'$ and its push-off that survives in $X-(H\cup H')$.) We first claim that $i(\pi_1(H'))$ is trivial in $\pi_1(X-(H\cup H'))$. To see this, note that since $X-H$ is simply--connected, the image of $G$ in $\pi_1(X-H)$ is trivial. Since $H'$ is a parallel copy of $H$, the immersed disks in $X-H$ contracting each element of $\pi_1(H')$ generator survive in $X-(H\cup H')$, so each $i(g)$ remains nullhomotopic even after removing $H'$. Hence $i(G)=i(\pi_1(H'))=1$. Then returning to the amalgamated free product forming $\pi_1(Y_k-H')$, since $a_1,b_1,b_2$ are identified with $\alpha_1,\alpha_2,\alpha_4$ in $\pi_1(Z'-\bar\Sigma)$, respectively, we see that $\alpha_1=\alpha_2=\alpha_4=1$ in $\pi_1(Y_k-H')$. With the other relations given in Proposition \ref{prop:Z'_pi1}, it follows easily that $\pi_1(Y_k-H')=1$. This completes the proof showing that $W_k$ is an irreducibly copy of $R_{4,k+4}$.

\subsubsection{$R_{g,5g}$ for $g\in \{4,6,8,10\}$.} We apply the fiber--reversing double, as in \ref{sec:c_1_2_3}, to get a few of the missing points on the $c_1^2=4$ line. Recall that for $0\leq k\leq g+1$, there exists a Lefschetz fibration on a manifold $DX_{g,2k}$ whose positive factorization is \[
V_{g,2k} = A_g^{\phi}(t_xt_yt_z)^k(t_at_bt_ct_d)^{2(g+1-k)}r\left(\overline{A_g^{\phi}(t_xt_yt_z)^{k}}\right)r^{-1}.
\] In \ref{sec:c_1_2_3}, we set $k\in\{1,2,3\}$, but here we set $k=4$. Just as was true when $k\in \{1,2,3\}$, $DX_{g,2k}=DX_{g,8}$ admits an orientation--preserving free involution. By the same reasoning as in \ref{sec:c_1_2_3}, the quotient of $DX_{g,2k}$ by this involution, which we call $Y_{g,8}$, has algebraic invariants given by $b_2^+(Y_{g,8})=g$ and $b_2^-(Y_{g,8})=5g$. In addition, $Y_{g,8}$ is irreducible. To verify that $\pi_1(Y_{g,8})\cong \Z_2$, recall that $DX_{g,8}$ is obtained through eight lantern substitutions on a genus $g$ Lefschetz fibration with positive factorization \[
V_{g} = A_g^{\phi}(t_at_bt_ct_d)^{2g+2}r\left(\overline{A_g^{\phi}}\right)r^{-1}.
\] The total space of this original Lefschetz fibration is simply--connected. Since all of the vanishing cycles in $V_g$ remain vanishing cycles in $V_{g,8}$, $DX_{g,8}$ is also simply--connected. Therefore $Y_{g,2k}$ has order two fundamental group. To verify the $w_2$--type, we use the fact shown in \cite[Section 3]{BKS}, that $DX_{g,2k}$ is non-spin. This completes the proof that $Y_{g,8}$ is an irreducible copy of $R_{g,5g}$.

\subsection{$R_{6,13}$.}

Let $f:X\to S^2$ be the minimal genus two Lefschetz fibration in \cite[Theorem 16]{Bay_Kork} whose total space is homeomorphic to $\CPsum{}{8}.$  Then let $X'$ be a fiber sum of $X$ and $\Sigma_2\times \Sigma_2$ along regular $\Sigma_2$ fibers, which has invariants $e(X')=19$, $\sigma(X')=-7$. Note that $f$ extends to a minimal Lefschetz fibration $\hat f:X'\to B$, where $B$ is a genus two surface. Let $\mathcal D$ be a disk in $B$ containing all of $\text{Crit}(f)$, and let $B^\circ = B-\mathcal D$. Let $a_1,b_1,a_2,b_2$ be standard symplectic generators of $H_1(B^\circ;\Z)$. Let $F$ be a regular fiber over $B^\circ$, and let $c_1,d_1,c_2,d_2$ be the standard symplectic generators of $H_1(F;\Z)$. Let $c'_1$ be a parallel copy of $c_1$. Then $\hat f^{-1}(B^\circ) \cong F\times B^\circ.$ With this identification, we may regard $a_1\times c_1$, $b_1\times c'_1$, $a_2\times c_1$, and $b_2\times c'_1$ as disjoint Lagrangian tori. Let $Y$ be obtained from $X'$ through the following Luttinger surgeries, where all framings are Lagrangian:\[
(a_1\times c_1,a_1,1),\,\, (b_1\times c'_1,b_1,1),\,\, (a_2\times c_1,a_2,1),\,\, (b_2\times c'_1,b_2,1).
\]
We next compute $\pi_1(Y)$. For this, we first observe that $\pi_1(Y)$ is obtained from adding relations to $\pi_1(F\times B^\circ -\mathcal N)$, where $\mathcal N$ is a union of neighborhoods where the Luttinger surgeries were performed. Proposition \ref{prop:sigmak_times_sigma2} then implies that $\pi_1(Y)$ is normally generated by $a_i,b_i,c_i,d_i$ for $i=1,2$. Moreover,  $\pi_1(Y)$ is a quotient of $\pi_1((F\times B) -\mathcal N) /N(\gamma_1,\cdots, \gamma_n)$, where $N(\gamma_1,\cdots, \gamma_n)$ is the subgroup normally generated by the vanishing cycles of $f$. Since $X$ is simply--connected, $\pi_1(F)/N(\gamma_1,\cdots \gamma_n)=1$, so $c_1=d_1=c_2=d_2=1\in \pi_1(Y)$. Next, from the Luttinger surgery, we have the following relations:\[
\{b_1,d_1\}a_1=1,\,\, \{a_1,d_1\}b_1=1,\,\,\{b_2,d_1\}a_2=1,\,\,\{a_2,d_1\}b_2=1,
\] where $\{x,y\}$ denotes a commutator of $x^{\pm 1}$ and $y^{\pm 1}$. Since all generators of $\pi_1(F)$ are trivial, the above relations reduce to $a_1=b_1=a_2=b_2=1$, so $\pi_1(Y)=1$.  Hence $Y$ is an irreducible copy of $\CPsum{5}{12}$. Moreover, if $F$ is a regular fiber far from Luttinger surgeries, $Y-F$ is simply--connected; this is because $\mu_{F}=[a_1,b_1][a_2,b_2]$, and the Luttinger surgeries that kill each of these elements happen away from $F$.  Hence, the $\Z_2$--construction to $Y$ along $F$ is an irreducible copy of $R_{6,13}$.

\subsection{$R_{6,15}$}

This is similar to the last construction for $R_{6,13}$, and in fact a little easier. In \cite[Theorem 20]{Bay_Kork}, they construct a minimal genus two Lefschetz fibration $f:X\to S^2$ whose total space is an irreducible copy of $\CPsum{3}{12}$. Suppose we take the fiber sum of $X$ with a copy of $T^2\times \Sigma_2$ along regular $\Sigma_2$ fibers, to get a minimal Lefschetz fibration $f':X'\to T^2$ with the same positive factorization as $f$. The total space $X'$ has $e(X')=21$, $\sigma(X')=-9$, and $\pi_1(X')\cong \pi_1(T^2)\cong \Z\oplus \Z$. In the same fashion as in \cite[Section 5.3]{Bay_Kork}, one can perform Luttinger surgeries on $X'$ to get a minimal simply--connected manifold $Y$. To this end, let $F$ be a regular fiber, and let $c_1,d_1,c_2,d_2$ be the standard $H_1$ generators on $F$. Let $a,b$ generate $H_1(X';\Z)$, which are geometrically dual curves on the $T^2$ base of $f'$. Let $Y$ be the manifold obtained from the following Luttinger surgeries, where the tori are equipped with Lagrangian framing:\[
(c_1\times a,a,1), (c_2\times b,1)
\]

The relations coming from $\pi_1(X)$ ensure that each curve on $F$ is null-homotopic, hence $c_i=d_i=1\in \pi_1(Y)$. The Luttinger surgeries add the relations $a\mu_1=1$ and $b\mu_2=1$, where $\mu_1$ and $\mu_2$ are commutators of $\{d_1,b\}$ and $\{d_2,a\}$, respectively. Since $c_i=d_i=1$, these relations kill $a$ and $b$ as well, so $Y$ is simply--connected. Hence, $Y$ is an irreducible copy of $\CPsum{5}{14}$. Let $F$ be a regular fiber of $X'$, which is far from the surgered tori. The Luttinger surgery relations ensure that $\mu_{F'}=[a,b]=1\in \pi_1(Y-F')$. Hence, the $\Z_2$-construction of $Y$ along $F$ is an irreducible copy of $R_{6,15}$.

\subsection{$R_{6,11}$.} Let $\hat B$ denote the minimal symplectic copy of $\CPsum{}{3}$ obtained through Luttinger surgery on the building block $B$ from Section \ref{section:big_region}. Let $Z$ be the minimal symplectic copy of $\CPsum{}{4}$ given by \cite[Theorem 24]{Bay_Kork}. Both $Z$ and $\hat{B}$ admit square zero symplectic genus two surfaces with simply--connected complements, denoted $H$ and $\Sigma$, respectively, by Proposition \ref{prop:B_meridian} and the existence of a section. It follows that the fiber sum $Y=Z\#_{\Sigma=H} \hat{B}$ is a symplectic minimal copy of $\CPsum{5}{10}$. Let $H'$ be a push-off of $H$ which survives in $Y$. We claim that the $\Z_2$--construction of $Y$ along $H'$ is an irreducible copy of $R_{6,11}$. The $w_2$--type follows from its signature being odd, and irreducibility follows from Usher's theorem. The only thing that remains to verify is that $Y-H'$ is simply--connected. For this, we use the fact that $\pi_1(Y-H')$ is a quotient of $\pi_1(\hat B-(H\cup H'))\ast \pi_1(Z-\Sigma)\cong \pi_1(\hat B-(H\cup H'))\ast 1$. The gluing map trivializes $\mu_{H}$, so this reduces to $\pi_1(\hat B-H')$, which we've already established to be trivial.

\section{Irreducible copies of $R_{a,b}$ when $a$ is odd} \label{sec:odd_constructions}

Recall the building blocks $B$, $C$, $D$, and $B_g$ from \cite{ABBKP}, which we used to construct telescoping triples $(Z,T_1,T_2)$ in section \ref{section:big_region}. By performing Luttinger surgeries to kill the fundamental groups and applying the $\Z_2$-construction on $Z$ over an embedded genus two surface, we obtained irreducible manifolds with order two fundamental group and even $b_2^+$ coefficients. In most of this section, we use the same telescoping triples $(Z,T_1,T_2)$, but only perform Luttinger surgery rather than a $\Z_2$--construction. This has the effect of changing $\pi_1$, but keeps $b_2^+$ odd. There are instances when Luttinger surgery is not an immediate option, and instead we perform $2/1$ torus surgeries on the elliptic surface $E(n)$, studied extensively in \cite{GompfNuclei}. 

The $\Z_2$--geography of irreducible $4$--manifolds was studied by Torres in \cite{Torres_2014}, who also used Luttinger surgery on minimal symplectic $4$--manifolds. Nonetheless, we include these constructions so that our $\Z_2$--geography is self--contained and complete, and also to suggest alternative constructions for coordinates with $b_2^+$ odd. To begin, we recall a construction thoroughly examined in \cite{GompfNuclei} and \cite{FriedMor}, which will serve as a building block in this section.

\begin{proposition}
	Let $E(n)_{2,2}$ denote the manifold obtained by performing two $2/1$-torus surgeries on the elliptic surface $E(n)$ along regular fibers. Then $E(n)_{2,2}$ is a symplectic manifold with order two fundamental group. It has the same Euler characteristic and signature as $E(n)$. $E(n)_{2,2}$ is minimal when $n\geq 2$.
		
	\label{prop:En_log_transform}
\end{proposition}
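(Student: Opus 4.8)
The plan is to realize $E(n)_{2,2}$ as the elliptic surface obtained from $E(n)$ by two multiplicity-two logarithmic transforms and to verify the four assertions separately, freely using the description of torus surgery from Section 2: that $\pi_1$ of the surgered manifold is $\pi_1$ of the fiber complement modulo the normal closure of $\mu_T^p\lambda_\eta^q$, and that torus surgery preserves $e$ and $\sigma$. A $2/1$-torus surgery along a regular fiber of the elliptic fibration $\pi\colon E(n)\to S^2$ is precisely a multiplicity-two log transform, so performing two such surgeries on disjoint regular fibers yields the complex elliptic surface $E(n)_{2,2}$, which is K\"ahler and hence symplectic \cite{GompfNuclei, FriedMor}. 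Since each surgery excises a copy of $T^2\times D^2$ and reglues one, and torus surgery leaves $e$ and $\sigma$ unchanged, the equalities $e(E(n)_{2,2})=e(E(n))$ and $\sigma(E(n)_{2,2})=\sigma(E(n))$ are immediate.

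For the fundamental group I would first compute $\pi_1(W)$ of the two-fiber complement $W:=E(n)\setminus(\nu F_1\cup\nu F_2)$. Decompose the base as $S^2=D_0\cup P\cup D_1\cup D_2$, where $D_0$ contains all critical values of $\pi$, each $D_i$ is a disk about the point below $F_i$, and $P$ is the connecting pair of pants. Over $S^2\setminus D_0$ the fibration is trivial, so $\pi^{-1}(P)\cong P\times T^2$, while $E_0:=\pi^{-1}(D_0)$ is diffeomorphic to $E(n)\setminus\nu F$, which is simply connected: the section caps off the meridian and the vanishing cycles of the nodal fibers kill both generators $a,b$ of a fiber. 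Applying van Kampen to $W=E_0\cup_{T^3}(P\times T^2)$ kills $a$ and $b$ and imposes $\ell_1\ell_2=1$, where $\ell_i$ is the boundary loop of $P$ around $D_i$, collapsing $\pi_1(W)$ to the infinite cyclic group generated by $\mu:=\ell_1=\mu_{F_1}$, with $\mu_{F_2}=\mu^{-1}$. Finally, each $2/1$-surgery imposes $\mu_{F_i}^2\lambda_i=1$; since each surgery curve $\lambda_i$ lies on the fiber it is already trivial in $W$, so both relations read $\mu^2=1$, giving $\pi_1(E(n)_{2,2})\cong\Z_2$.

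For minimality, I would observe that for $n\ge 2$ one has $b_2^+(E(n)_{2,2})=2n-1\ge 3$, so this elliptic surface is neither rational nor ruled; being a relatively minimal elliptic surface, it is therefore minimal \cite{FriedMor}, which extends Gompf's minimality of the single log transform $E(n)_2$ noted earlier \cite{GompfNuclei}.

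The main obstacle is the $\pi_1$ computation, and specifically the point that $\mu$ must have \emph{infinite} order in $W$. With only one fiber removed, the section is a disk and nullhomotopes the meridian outright, which is exactly why the single surgery $E(n)_2$ remains simply connected; removing a second fiber turns the section into an annulus, so it merely identifies $\mu_{F_1}$ with $\mu_{F_2}^{\pm 1}$ rather than killing either. Making this transition precise through the pair-of-pants decomposition is what yields $\Z_2$ in place of the trivial group, and is the step that requires care.
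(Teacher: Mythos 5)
Your proof is correct, and it takes a genuinely different route from the paper for the simple reason that the paper gives no proof at all: Proposition \ref{prop:En_log_transform} is stated as a recollection of a construction ``thoroughly examined'' in \cite{GompfNuclei} and \cite{FriedMor}, and the only proof environment nearby belongs to the corollary about relative minimality of $(E(1)_{2,2},F)$. Where the paper defers wholesale to the literature, you give a self-contained argument, and its core --- the pair-of-pants van Kampen computation --- correctly isolates the mechanism that the citation hides: removing a second fiber turns the punctured section from a disk (which kills $\mu_F$, explaining why $E(n)_2$ is simply connected) into an annulus that merely identifies $\mu_{F_1}$ with $\mu_{F_2}^{-1}$, so the two-fiber complement has $\pi_1\cong\Z$ generated by the meridian; since the surgery curves $\lambda_i$ lie on fibers and fiber classes die against the vanishing cycles, both $2/1$ relations read $\mu^2=1$, recovering the classical $\pi_1(E(n)_{p,q})\cong\Z_{\gcd(p,q)}$. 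Two points in your argument still lean on the same references the paper cites, and you should be aware of this: (i) the identification of a $2/1$-torus surgery with a multiplicity-two logarithmic transform --- needed both for symplecticity via the K\"ahler structure and for $E(n)_{2,2}$ to be well defined independently of the choice of $\lambda_i$ --- is Gompf's theorem that log transforms on $E(n)$ depend smoothly only on their multiplicities; and (ii) your minimality step passes from complex minimality of a non-rational, non-ruled relatively minimal elliptic surface to \emph{smooth} minimality (absence of smoothly embedded $(-1)$-spheres), which for $b_2^+=2n-1\geq 3$ is supplied by the gauge-theoretic results in \cite{FriedMor}. Since the paper itself rests entirely on those citations, your proof is, if anything, more informative: it makes visible exactly which inputs are elementary (the $\pi_1$ and $e,\sigma$ claims) and which genuinely require the deep results (symplecticity and minimality), which is the trade-off between the two approaches.
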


\begin{corollary}
	Let $F$ be a regular fiber of $E(1)$. The pair $(E(1)_{2,2},F)$ is relatively minimal.
\end{corollary}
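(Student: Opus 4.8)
The plan is to realize the complement $E(1)_{2,2}\setminus\nu(F)$ as one side of a fiber sum whose total space is already known to be minimal, and then to note that a $(-1)$--sphere in this complement would persist as a $(-1)$--sphere in the minimal total space. Since, by definition, relative minimality of the pair $(E(1)_{2,2},F)$ means exactly that $E(1)_{2,2}\setminus\nu(F)$ is minimal, this contradiction is all that is needed.

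First I would use the standard fiber sum decomposition $E(2)=E(1)\#_F E(1)$ along a regular fiber $F$. Because a $2/1$--torus surgery along a regular fiber is supported in a neighborhood of that fiber, the two surgeries defining $E(n)_{2,2}$ may be performed on two regular fibers $F_1,F_2$ contained in the first $E(1)$--summand and disjoint from the fiber sum fiber $F$. Performing these surgeries commutes with forming the fiber sum, giving the identification
\[
E(2)_{2,2}=E(1)_{2,2}\#_F E(1)=\left(E(1)_{2,2}\setminus\nu(F)\right)\cup_\partial\left(E(1)\setminus\nu(F)\right).
\]
In particular $E(1)_{2,2}\setminus\nu(F)$ is a codimension--zero submanifold of $E(2)_{2,2}$.

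Next I would invoke Proposition \ref{prop:En_log_transform}, which guarantees that $E(2)_{2,2}$ is minimal since $n=2\geq 2$. The remaining observation is purely smooth: any smoothly embedded $(-1)$--sphere $S\subset E(1)_{2,2}\setminus\nu(F)$ lies in the interior of this piece and hence is a smoothly embedded sphere in $E(2)_{2,2}$, with the same normal bundle and therefore the same self--intersection $-1$. This would contradict the minimality of $E(2)_{2,2}$. Hence $E(1)_{2,2}\setminus\nu(F)$ contains no $(-1)$--sphere, so $(E(1)_{2,2},F)$ is relatively minimal. One could instead run the argument through Usher's Theorem \ref{thm:usher} --- ruling out case (ii) by comparing $b_2$ --- but the submanifold argument settles the smooth (rather than symplectic) notion of minimality directly, which is what the definition requires.

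The one point demanding care is the identification $E(2)_{2,2}\cong E(1)_{2,2}\#_F E(1)$, i.e.\ that the torus surgeries and the fiber sum can be arranged with disjoint supports and hence commute. This is a locality argument: the surgeries are supported in $\nu(F_1)\cup\nu(F_2)$ while the fiber sum alters only $\nu(F)$, and $F,F_1,F_2$ are mutually disjoint regular fibers, so neither operation interferes with the other. Once this decomposition is in hand, the conclusion is immediate.
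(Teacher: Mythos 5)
Your proof is correct and follows essentially the same route as the paper: the paper likewise argues that a failure of relative minimality would make the fiber sum $E(1)_{2,2}\#_F E(1)$ non-minimal, identifies this fiber sum with $E(2)_{2,2}$, and invokes Proposition \ref{prop:En_log_transform} for the contradiction. Your write-up simply makes explicit the locality argument (surgeries supported away from the summing fiber) and the persistence of a $(-1)$--sphere, both of which the paper leaves implicit.
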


\begin{proof}
	If $(E(1)_{2,2},F)$ were not relatively minimal, than the fiber sum of $E(1)_{2,2}$ and $E(1)$ along regular fibers would be non-minimal. But this fiber sum is precisely $E(2)_{2,2}$, so this would contradict Proposition \ref{prop:En_log_transform}.
\end{proof}

Before starting constructions, we give a useful tool for fundamental group computations.

\begin{proposition} \label{prop: 1/2 surgery}
	Let $(Z,T_1,T_2)$ be a telescoping triple. Then there are curves on $T_1$ and $T_2$ such that performing $1/1$-surgery on $T_1$ and $1/2$-surgery on $T_2$ along these curves results in a manifold with a fundamental group of order two.
\end{proposition}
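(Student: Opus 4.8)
The plan is to reduce everything to a single Seifert--Van Kampen computation in $\pi_1(Z-(T_1\cup T_2))$, exploiting that the defining conditions of a telescoping triple make this group as simple as possible, and then to read off $\pi_1$ of the surgered manifold as an abelian quotient of $\Z^2$.

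First I would observe that by condition (2) of the telescoping triple, both meridians $\mu_{T_1}$ and $\mu_{T_2}$ are already trivial in $\pi_1(Z-(T_1\cup T_2))$. Since filling the tori back in kills precisely the normal closure of the meridians, the inclusion $Z-(T_1\cup T_2)\hookrightarrow Z$ induces an \emph{isomorphism} $\pi_1(Z-(T_1\cup T_2))\cong\pi_1(Z)\cong\Z^2$. This is the main structural input, and it converts the problem into pure abelian bookkeeping. Next I would recall from the description of torus surgery in the excerpt that a $1/1$ surgery on $T_1$ along a primitive curve $\lambda_1$ adjoins the relation $\mu_{T_1}\lambda_1=1$, while a $1/2$ surgery on $T_2$ along $\lambda_2$ adjoins $\mu_{T_2}\lambda_2^{2}=1$. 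Performing both, the fundamental group of the result is the quotient of $\pi_1(Z-(T_1\cup T_2))\cong\Z^2$ by the normal closure of these two elements; because both meridians vanish, the relations collapse to $\lambda_1=1$ and $\lambda_2^{2}=1$, where $\lambda_i$ now denotes the image in $\Z^2$ of the chosen curve.

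It then remains to choose the curves. By condition (3) the image of $\pi_1(T_1)\to\pi_1(Z)$ is a rank-one summand; let $v$ generate it, so that $v$ is primitive in $\Z^2$ and extends to a basis $\{u,v\}$. I would take $\lambda_1$ to be a primitive curve on $T_1$ mapping to $v$, which exists since $\pi_1(T_1)$ surjects onto the summand $\langle v\rangle$, and, using the isomorphism $\pi_1(T_2)\cong\pi_1(Z)$ from condition (4), take $\lambda_2$ to be the curve on $T_2$ mapping to $u$. The surgered manifold then has fundamental group $\Z^2/\langle\langle v,\,u^{2}\rangle\rangle=\langle u,v\mid [u,v],\,v,\,u^{2}\rangle\cong\Z_2$, as claimed. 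I would also note in passing that both surgeries have $p=1$, so both are Luttinger surgeries, which is what makes this proposition useful for the symplectic constructions downstream.

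The only genuine subtlety --- and the step I would treat most carefully --- is the first one: verifying that the meridian-triviality hypothesis really upgrades the standard surjection $\pi_1(Z-(T_1\cup T_2))\to\pi_1(Z)$ to an isomorphism, and that the pushed-off surgery curves $\lambda_i$ have the same image in $\Z^2$ as the underlying primitive curves on the $T_i$. The latter holds because a framed pushoff differs from the original curve only by meridional copies, and the meridians are null-homotopic in $Z$; hence the framing choice is irrelevant to the $\pi_1$ computation. Everything after establishing the isomorphism is the one-line abelian quotient above.
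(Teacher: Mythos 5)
Your proof is correct and follows essentially the same route as the paper's: both use condition (2) to identify $\pi_1(Z-(T_1\cup T_2))\cong\pi_1(Z)\cong\Z^2$, note that the $1/1$ and $1/2$ surgery relations $\mu_{T_1}\lambda_1$ and $\mu_{T_2}\lambda_2^2$ collapse to $\lambda_1=1$ and $\lambda_2^2=1$, and choose the surgery curves via conditions (3) and (4) so that their images form a basis of $\Z^2$, giving $\Z^2/\langle v,2u\rangle\cong\Z_2$. The only cosmetic difference is that the paper works with explicit geometrically dual curves $\{a_i,b_i\}$ on the $T_i$, whereas you phrase the same choice abstractly; your added care about framed push-offs differing from the curves by meridians is a point the paper leaves implicit.
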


\begin{proof}
	Let $\{a_i,b_i\}$ be a pair of geometrically dual curves on each $T_i$. It follows from the definition of a telescoping triple that the meridians of $T_i$ are trivial in $Z-(T_1\cup T_2)$, and that we may assume $b_2$ generates $\set{0} \times \Z \subset \Z^2\cong \pi_1(Z)$, $a_1,a_2$ generate $\Z\times \set{0}\subset \Z^2\cong \pi_1(Z)$, and $b_1$ is trivial. In particular, $\pi_1(Z)\cong \pi_1(Z-(T_1\cup T_2)) = \langle a_1,b_2|\,[a_1,b_2]\rangle$. When we perform the torus surgery along $T_1$, we attach $\{pt\}\times \partial D^2\subset T^2\times D^2$ along the curve $\mu_{T_1} a_1\in \pi_1(\partial \nu(T_1))$, which introduces the relation $\mu_{T_1} a_1 = 1$. Hence $a_1 = 1$. Similarly, performing $+1/2$ surgery on $T_2$ along $b_2$ introduces the relation $\mu_{T_2} b_2^2 = 1$, and so $b_2^2 = 1$. Then the fundamental group after both surgeries is $ \langle a_1, b_2 \mid [a_1,b_2], a_1, b_2^2  \rangle \cong \Z_2.$
\end{proof}

\begin{rmk}\label{rmk:other_pi1}
	By changing the surgery coefficient on $T_2$ to $1/n$, this proof generalizes to produce a $4$--manifold with any cyclic fundamental group, including $n=0$.
\end{rmk}

\subsection{{\bf Main constructions for the lattice points with odd $\mathbf{b_2^+}$:}}

We go through the proof of \cite[Theorem 22]{ABBKP}, where they construct manifolds $X_{a,b}$ which are irreducible copies of $\CPsum{a}{b}$. With slight modifications, this same proof can construct a symplectic irreducible manifold $Y_{a,b}$ which has all the same $b_2^{\pm}$ and intersection form as $X_{a,b}$, but has  order two fundamental group. In this section, it will often be more convenient to work with the coordinates $(c_1^2,\chi_h)$ rather than $(b_2^+, b_2^-)$. Written in terms of $c_1^2$ and $\chi_h$, this subsection aims to populate the plane $\{(c_1^2,\chi_h)|\, 0\leq c_1^2\leq 8\chi_h-2\}$. Just like in \cite[Theorem 22]{ABBKP}, we start by finding points with even $c_1^2$.

\underline{\underline{$c_1^2$ even.}} Let $X_{a,b}$ be a $4$-manifold constructed in \cite[Theorem 22]{ABBKP} with $c_1^2$ even. Let $Z$ be a telescoping triple constructed using building blocks $B,B_g,C,D$. Then $X_{a,b}$ is obtained in one of the following three ways: (1) $X_{a,b}$ is a result of two Luttinger surgeries on $Z$, (2) $X_{a,b}$ is a result of a single Luttinger surgery on $Z\#_{T_2=Fiber} E(k)$, or (3) $X_{a,b}$ is a copy of $E(k)_{2,3}$, which is $E(k)$ with two torus surgeries. We address each of these cases in order.\begin{enumerate}[(1)]
	\item To get same invariants as $X_{a,b}$, but with order two fundamental group, we perform Luttinger surgery differently to obtain $Y_{a,b}$, as in Proposition \ref{prop: 1/2 surgery}. This change will still give odd manifolds because the odd surface in $Z-T_1-T_2$ is not affected.
	
	\item To get $Y_{a,b}$ with the invariants covered by the second family, i.e. $Z\#_{T_2=Fiber} E(k)$, we slightly diverge from their construction, and take $Z\#_{T_1=Fiber} E(k)$ instead, followed by $1/2$ Luttinger surgery on $T_2$. Similar to the proof of Proposition \ref{prop: 1/2 surgery}, $\pi_1(Y_{a,b})\cong \Z_2$. To see this note that $\pi_1(Z\#_{T_1=Fiber} E(k))$ is generated freely by $b_2$, and after the surgery we obtain a relation $b_2^2$. The manifold is still odd because again there is an odd surface in $Z-(T_1\cup T_2)$.
	
	\item In this case, $c_1^2(X_{a,b})=0$. We cannot just change the simply-connected manifold $E(k)_{2,3}$ to $E(k)_{2,2}$, because the latter may be even. So getting irreducible copies along the $c_1^2=0$ line involves a bit more work. For this, we perform the $\Z_2$-construction to $E(n)_2$ for $n>1$ as follows: consider the Lagrangian, homologically essential torus $\Lambda\subset E(n)$ from Section \ref{sec:2k_10k}. After a perturbation, we may regard $\Lambda$ as a symplectic submanifold. As Gompf shows in \cite{GompfNuclei}, performing 2-torus surgery on a regular fiber of $E(n)$ results in an irreducible simply--connected manifold with odd intersection form, $E(n)_2$. We can always perform the surgery in a ``nucleus'' of $E(n)$, which is a cusp fiber together with the $(-n)$--section. Since $\Lambda$ is disjoint from the nucleus, it survives as an embedded torus in $E(n)_2$. The odd surface in $E(n)_2$ is in the nucleus where the torus surgery is performed, so is also disjoint from $\Lambda$. If we perform the $\Z_2$-construction to $E(n)_2$ along $\Lambda$, we obtain a minimal symplectic manifold $Y$ with $\pi_1(Y)\cong \Z_2$ and an odd intersection form. From computing the algebraic invariants, we deduce that the resulting manifold is an irreducible copy of $R_{2n-1,10n-1}$ for $n>1$.

\end{enumerate}

The above constructions cover all pairs $\{(c_1^2,\chi_h)|\, 0\leq c_1^2 \leq 8\chi_h -2 \text{ and } c_1^2 \text{ is even}\}$, except the point $(0,1)$. Next we address the case when $c_1^2$ is odd.

\underline{\underline{$c_1^2$ odd.}} As before, we summarize the construction given in \cite[Theorem 22]{ABBKP} which constructs irreducible copies of $\CPsum{a}{b}$ for this part of the geography plane, which we again denote by $X_{a,b}$. We will describe how to modify their construction to obtain an irreducible manifold $Y_{a,b}$ with the same algebraic invariants as $X_{a,b}$, but an order two fundamental group. Note that any $Y_{a,b}$ with $c_1^2$ odd will have odd signature, and so will automatically have an odd intersection form. For this reason, we do not need to make any remarks on the existence of surfaces with odd self--intersection. The following additional building blocks are needed, all of which are described in \cite{ABBKP}.\begin{itemize}
	\item $S_{1,1}$ is a manifold constructed by Gompf, with $c_1^2=1$ and $\chi_h=2$. $S_{1,1}$ is simply--connected and has a square zero torus with a trivial meridian.
	
	\item $X_{3,12}$ is simply--connected with algebraic invariants $c_1^2=7$ and $\chi_h=2$. It also has a square-zero torus whose meridian is trivial.
	
	\item $P_{5,8}$ has invariants $c_1^2=21$ and $\chi_h=3$. In this case, $\pi_1(P_{5,8})\cong \Z$, and there is a square zero torus $T$ with a trivial meridian, such that the induced map (of a push-off) $i_*\colon  \pi_1(T)\to \pi_1(P_{5,8}-T)$ is surjective.
\end{itemize}
All of the new building blocks above are symplectic and minimal. Now let $(c,\chi) \in \mathcal P = \{(c_1^2,\chi_h)|\, 0\leq c_1^2 \leq 8\chi_h -2 \text{ and } c_1^2 \text{ is odd}\}$. We consider various regions of $\mathcal P$, and find a way to realize $(c,\chi)$ in each of these regions.

\underline{$1 < c \leq 8\chi - 17$.} Let $(c',\chi')= (c-1,\chi-2)$. Then let $X_{a,b}$ be the manifold constructed in the $c_1^2$ even argument above corresponding to $(c',\chi')$. Then since $c'>0$, $X_{a,b}$ is constructed in case (1) or (2). If $X_{a,b}$ is built using case (1), then $X_{a,b}$ is a surgered telescoping triple $(Z,T_1,T_2)$. We may proceed by taking a fiber sum of $Z$ with $S_{1,1}$ along $T_1$ and applying $1/2$-surgery along $T_2$ as in Proposition \ref{prop: 1/2 surgery}. The fundamental group will be generated by $b_2$ with order two with the computation being the same as before. Fiber summing with $S_{1,1}$ increases $c_1^2$ and $\chi_h$ by $1$ and $2$ respectively, so the resulting manifold realizes $(c,\chi)$.
	
If $X_{a,b}$ is built using (2), then we fiber sum the corresponding $Z$ with $S_{1,1}$ along $T_2$ and $E(k)_{2,2}$ along $T_1$. Here the fundamental group computation will involve one more step. First, $\pi_1(S_{1,1}\#_{T_2} Z)$ is trivial. But then after fiber summing with $E(k)_{2,2}$, the manifold with fundamental group of order two, we can use Seifert-Van Kampen to check that $\pi_1(S_{1,1}\#_{T_2} Z \#_{T_1} E(k)_{2,2})\cong \Z_2$. This will follow from the fact that $\mu_F \subset E(k)_{2,2}-F$ is homotopic to $x_1x_2$, where $x_i$ are the generators of $\pi_1(E(k))_{2,2}$ given in \cite[p. 489]{GompfNuclei}. Again, the resulting manifold will have $c_1^2 = c'+1=c$ and $\chi_h = \chi'+2=\chi$.

\underline{$7< c \leq 8\chi-11$.} We take $(c',\chi')=(c-7,\chi-2)$. Then the argument repeats verbatim as the previous case, except that $S_{1,1}$ is replaced by $X_{3,12}$.

\underline{$21 < c \leq 8\chi -5$.} We repeat the previous two cases, now with $(c',\chi')=(c-21,\chi-3),$ using $P_{5,8}$ in place of $S_{1,1}$ and $X_{3,12}$. This time, more care is needed because $\pi_1(P_{5,8}) \cong \Z$. Recall that $T \hookrightarrow P_{5,8}$ is surjective on the fundamental group, and the inclusion $P_{5,8}-T^2 \hookrightarrow P_{5,8}$ induces an isomorphism. Let $t_1, t_2$ be generators of $\pi_1{(T)}$ so that $t_1$ generates $\pi_1{(P_{5,8})}$. Then $P_{5,8}\#_{T_1} Z$ will have a single generator if we identify $T^2 \subset P_{5,8}$ with $T_1\subset Z$ so that $t_1 \mapsto b_1$, where $b_1$ is nullhomotopic in $Z-(T_1\cup T_2)$. Then we proceed as in the previous cases by either performing Luttinger surgery on $T_2$ or taking a fiber sum with $E(k)_{2,2}$.

\underline{$c \in \set{1,7,21}$.} The manifolds in \cite[Theorem 22]{ABBKP} with $c_1^2\in \{1,7,21\}$ are constructed by fiber summing $S_{1,1}$ ($X_{3,12}$ or $P_{5,8}$, respectively) with $E(k)_{2,3}$, which is obtained from $E(k)$ by multiplicity $2$ and $3$ torus surgeries. If we mimic their construction but replace $E(k)_{2,3}$ with $E(k)_{2,2}$, we arrive at an irreducible manifold with the same Euler characteristic and signature as $X_{a,b}$, but having an order two fundamental group which is a consequence of Seifert-Van Kampen.

\underline{$21\leq c= 8\chi - 3$.} From the \cite[Remark 1]{ABBKP}, there exists a family of manifolds $P_k$ which are minimal and symplectic, with $(b_2^+,b_2^-)=(1+2k,4+2k)$ for all $k\geq 2$. Moreover, $\pi_1(P_k)\cong \Z$, and for each $k$, there is a homologically essential Lagrangian torus $T_k\subset  P_k$ such that its meridian is trivial and the induced map $i_*\colon \pi_1(T_k)\to \pi_1(P_k-T_k)\cong \pi_1(P_k)$ is surjective. So if $t_k$ generates $\pi_1(P_k-T_k)$, there is some loop $\lambda \in \pi_1(T_k)$ such that $i_*(\lambda)=t_k$. Then if we perform $1/2$ Luttinger surgery on $T_k$, along $\lambda$, the resulting manifold has fundamental group $\langle t_k|\, t_k^2\rangle$ which is  isomorphic to  $\Z_2$. The surgered $P_k$ realizes an irreducible copy of $R_{1+2k,4+2k}$.

\noindent {\bf Current progress of lattice points.} We recall that $\overline{R_{a,b}} \cong R_{b,a}$, which is why we restrict our attention to the region of the geography plane satisfying $b_2^+,b_2^- >0$, $c_1^2 \geq 0$, and $\sigma \leq 0 $. With this in mind, the argument above does not give a copy of $R_{a,b}$ for the following $(a,b)$-coordinates with $a$ odd:

\begin{itemize}
	\item $(1,8),\, (1,6),\, (1,4),$\\
	$(3,18),\, (3,16),\,  (3,14),\, (3,12),\, (3,10),\, (3,8),\, (3,6),$\\
	$(5,14),\, (5,12),\, (5,10).$
	
	\item $R_{2k-1, 2k}$ (signature $-1$ line)
	
	\item $R_{2k-1, 2k-1}$ (signature $0$ line)
	
	\item $R_{1,9}$ (first point on $c_1^2=0$ line)
	
\end{itemize}

The points $(a,b)=(3,16)$ and $(a,b)=(3,18)$ are covered by section \ref{sec:c_1_2_3}, and we leave the $\sigma=0$ and $\sigma=-1$ lines to \cite{baykur2024smooth}. Next we fill in more of these missing points.

\subsection{Reaching finitely many missing points}

Here is a lemma that will be useful for fundamental group computations. The proof is left as an exercise.

\begin{lemma} \label{lemma:meridian_product}
	Let $G_1\ast_H G_2$ be an amalgamated free product of $G_i$ over $\phi_i\colon H \to G_i$. Assume that $\phi_1(h_0) = 1$ and $\phi_2(h_0) = g$ for some $h_0 \in H$. Then $G_1\ast_H G_2 \cong G_1\ast_H (G_2/N(g))$ over $\phi_1$ and $\bar\phi_2$, where $\bar \phi_2$ is a composition of $\phi_2$ with the quotient map $q$, and $N(g)$ is the subgroup normally generated by $g\in G_2$.
\end{lemma}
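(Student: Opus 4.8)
The plan is to prove the isomorphism by appealing to the universal property of the amalgamated free product, viewed as a pushout in the category of groups. The one piece of genuine content is the observation that the hypotheses force the element $g$ to already be trivial in $G_1\ast_H G_2$: since the amalgamation identifies $\phi_1(h_0)$ with $\phi_2(h_0)$, and $\phi_1(h_0)=1$ while $\phi_2(h_0)=g$, the image of $g$ in $G_1\ast_H G_2$ equals the image of $1$, hence is trivial. Everything else is formal diagram-chasing. Concretely I would write $P=G_1\ast_H G_2$ with its canonical maps $\iota_1\colon G_1\to P$ and $\iota_2\colon G_2\to P$ satisfying $\iota_1\phi_1=\iota_2\phi_2$, and $P'=G_1\ast_H(G_2/N(g))$ with canonical maps $\iota_1',\iota_2'$ satisfying $\iota_1'\phi_1=\iota_2'\bar\phi_2$, where $\bar\phi_2=q\circ\phi_2$ and $q\colon G_2\to G_2/N(g)$ is the quotient. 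First I would note that $\bar\phi_2$ is automatically a homomorphism as a composite, so $P'$ is well-defined.

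Then I would construct two maps. For $\Phi\colon P\to P'$, the pair $(\iota_1',\,\iota_2'\circ q)$ of homomorphisms out of $G_1$ and $G_2$ agrees on $H$, since $(\iota_2'q)\phi_2=\iota_2'\bar\phi_2=\iota_1'\phi_1$; by the universal property of $P$ this yields a unique $\Phi$ with $\Phi\iota_1=\iota_1'$ and $\Phi\iota_2=\iota_2'q$. For $\Psi\colon P'\to P$, the key observation gives $\iota_2(g)=\iota_2\phi_2(h_0)=\iota_1\phi_1(h_0)=\iota_1(1)=1$, so the homomorphism $\iota_2$ kills the normal closure $N(g)$ and factors as $\iota_2=\bar\iota_2\circ q$ for a unique $\bar\iota_2\colon G_2/N(g)\to P$. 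The pair $(\iota_1,\bar\iota_2)$ agrees on $H$ because $\bar\iota_2\bar\phi_2=\bar\iota_2 q\phi_2=\iota_2\phi_2=\iota_1\phi_1$, so the universal property of $P'$ produces a unique $\Psi$ with $\Psi\iota_1'=\iota_1$ and $\Psi\iota_2'=\bar\iota_2$.

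Finally I would check that $\Phi$ and $\Psi$ are mutually inverse using the uniqueness clauses. The composite $\Psi\Phi$ satisfies $\Psi\Phi\iota_1=\iota_1$ and $\Psi\Phi\iota_2=\Psi\iota_2'q=\bar\iota_2 q=\iota_2$, so it agrees with $\mathrm{id}_P$ on the images of $\iota_1,\iota_2$, whence $\Psi\Phi=\mathrm{id}_P$. Likewise $\Phi\Psi\iota_1'=\iota_1'$, and $\Phi\Psi\iota_2'=\Phi\bar\iota_2$; since $\Phi\bar\iota_2 q=\Phi\iota_2=\iota_2'q$ and $q$ is surjective, $\Phi\bar\iota_2=\iota_2'$, so $\Phi\Psi=\mathrm{id}_{P'}$. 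Because both constructed maps intertwine $\iota_1\leftrightarrow\iota_1'$ and $\iota_2\leftrightarrow\iota_2'q$, the resulting isomorphism is compatible with the amalgamation data, which is precisely the "over $\phi_1$ and $\bar\phi_2$" assertion.

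As for the main obstacle: there really is no hard step, which is why this is left as an exercise. The only thing to be careful about is not conflating $g\in G_2$ with its image in $P$; the whole argument hinges on the single identity $\iota_2(g)=1$, and once that is recorded the two universal-property factorizations are routine. A shorter equivalent alternative I would mention is the presentation-level argument: $G_1\ast_H G_2$ is $G_1\ast G_2$ modulo the normal closure of all $\phi_1(h)\phi_2(h)^{-1}$, the relation coming from $h_0$ is exactly $g=1$, and since defining relations may be imposed in any order, imposing $g=1$ inside the $G_2$ factor first produces $G_2/N(g)$ and leaves the amalgamated product unchanged.
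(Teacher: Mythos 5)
Your proof is correct. There is nothing in the paper to compare it against: the authors explicitly leave this lemma as an exercise, so your argument fills a gap the paper deliberately omits. Both of your routes work. The universal-property argument is clean and complete: the single substantive point is indeed that $\iota_2(g)=\iota_2\phi_2(h_0)=\iota_1\phi_1(h_0)=1$ in the pushout, after which the two factorizations and the uniqueness-based verification that $\Phi$ and $\Psi$ are mutually inverse are routine and carried out correctly (including the right-cancellation of the surjection $q$ in the check $\Phi\bar\iota_2=\iota_2'$). One point worth making explicit, which your proof implicitly gets right: since $\phi_1(h_0)=1$ while $h_0$ is presumably nontrivial, $\phi_1$ is not injective, so ``amalgamated free product'' here must be read as the pushout of $G_1\leftarrow H\rightarrow G_2$ rather than the classical amalgamated product with injective edge maps; this is exactly the object produced by Seifert--Van Kampen in the paper's applications, and your argument never uses injectivity, so it is the correct level of generality. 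Your presentation-level alternative (impose the relation $g=1$ first, noting that killing the normal closure of an element of one free factor yields the free product with the quotiented factor) is also valid and is probably the one-line argument the authors had in mind when they wrote ``exercise.''
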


\subsubsection{$R_{3,k}$ for $k\in\{6,8,10\}$.}

Let $X\in \{B,C,D\}$, and let $H$ be its embedded genus two surface. Then recall the manifold $Z'$ from section \ref{sec:b+=4}, which is obtained through $(a_2\times a_3,a_3,-1)$ Luttinger surgery on $T^4\#\CPbar{}$, where $a_i$ is a standard generator of $H_1(T^4;\Z)$. Then $Z'$ has a square zero genus two surface $\bar \Sigma$. We claim that $X\#_{H=\bar \Sigma} Z'$ is again a telescoping triple. To see this, we first regard $\pi_1(X\#_{H=\bar \Sigma} Z')$ as a quotient of $\pi_1(X-H)*\pi_1(Z'-\bar \Sigma)$, where various curves along $\bar \Sigma$ and $H$ are identified, as are $\mu_H$ and $\mu_{\bar \Sigma}$. As we saw in \ref{section:big_region}, $\mu_H=1\in \pi_1(X-H)$. Therefore, by Lemma \ref{lemma:meridian_product} we get $$\pi_1(X\#_{H=\bar \Sigma} Z')\cong \pi_1(X-H)\ast_G \pi_1(Z'-\bar \Sigma)/N(\mu_{\Sigma})\cong \pi_1(X-H)\ast_G\pi_1(Z'),$$ where $G=\pi_1(S^1\times \Sigma_2)$. Computing the fundamental group of $\pi_1(Z')$ is a simple exercise. Removing a standard $S^1\times S^1 \times \set{pt}^2$ from $T^4$ gives $T^2 \times \Sigma_1^1$. Removing $a_2\times a_3$ torus gives the fundamental group $$\langle a_1,a_2,a_3,a_4 \mid [a_i,a_j] \text{ for } \{i,j\}\neq \{1,4\} \rangle.$$ After gluing $T^2\times D^2$ according to the surgery, the fundamental group becomes 
$$\langle a_1,a_2,a_3,a_4 \mid [a_i,a_j] \text{ for } \{i,j\}\neq \{1,4\}, \medspace [a_1,a_4]a_3^{-1}\rangle,$$
which is isomorphic to 
$$\langle a_1,a_2,a_4 \mid [a_1,a_2], [a_2,a_4]\rangle.$$ Blowing up once does not affect the fundamental group, so this gives a presentation of $\pi_1(Z')$.

Next, we claim that the natural map $\pi_1(X-H)\to \pi_1(X-H)\ast_G\, \pi_1(Z')$ is an isomorphism. First, $\bar \Sigma \hookrightarrow Z'$ induces a surjection on the fundamental groups, so all the generators of $\pi_1(Z')$ are identified with generators of $\pi_1(X-H)$ in $\pi_1(X-H)\ast_G\pi_1(Z')$. So $\pi_1(X-H)\ast_G\pi_1(Z')$ is a quotient of $\pi_1(X-H)$ by the relations coming from $\pi_1(Z')$. But $\pi_1(X-H)\cong \Z\times \Z$ is abelian so commutator relations are automatic in $\pi_1(X-H)$. Therefore $\pi_1(X\#_{H=\bar \Sigma} Z')\cong \pi_1(X-H) \cong \pi_1(X)$. 

Since the tori $T_1$ and $T_2$ are disjoint from $H$, their inclusion-induced maps on $\pi_1$ are not affected, so $(X\#_{H=\bar \Sigma} Z',T_1,T_2)$ is a telescoping triple.  By Proposition \ref{prop: 1/2 surgery}, it is related to a manifold with an order two fundamental group through two Luttinger surgeries. Minimality follows from Usher's theorem and Proposition \ref{prop:Luttinger_minimal}, and the $w_2$-type follows from the parity of the signature. For $X=B,C,D$, this realizes an irreducible copy of $R_{3,6}$, $R_{3,8}$, $R_{3,10}$, respectively.

\subsubsection{$R_{5,k}$ for $k\in \{10,12,14\}$.}

Let $X\in \{B,C,D\}$. Similar to the previous construction, we will take a fiber sum with another manifold, argue that the result is a telescoping triple, and then apply Proposition \ref{prop: 1/2 surgery}. This time, we will not fiber sum with $X$ but rather a blow-up of $X$. Recall that each $X$ contains a square zero genus two surface $H$. We first claim that each $X$ also contains a square $-1$ torus $\Lambda$ which is disjoint from the telescoping tori $T_1$ and $T_2$, and intersects $H$ transversely at a point. Such a torus can be constructed in a variety of ways, but one simple way to find it is to recall from section \ref{section:big_region} that each $X$ is a fiber sum of two manifolds: $X=M_1\#_{\Sigma_2} M_2$. In all cases, each $M_i$ has a square zero torus intersecting $\Sigma_2$ once, and at least one of the $M_i$ has an exceptional sphere intersecting $\Sigma_2$ once. By tubing the square zero torus with the exceptional sphere in the fiber sum, we get $\Lambda$. This torus is always disjoint from the telescoping tori $T_1$ and $T_2$, as argued in \cite[Theorems 7, 11, 12]{ABBKP}. Now, by symplectically resolving $\Lambda \cup H$, we get a square $1$ genus three surface $F_3$ in $X$. After one blow-up, we obtain a relatively minimal pair $(\tilde X,\tilde F_3)$, where $\tilde X=X\#\CPbar{}$ and $\tilde F_3$ is the proper transform of $F_3$.

Next, let $W$ be the product space $T^2\times \Sigma_2$. Let $\Sigma$ be the resolution of $\{pt\}\times \Sigma_2\cup T^2\times \{pt\}$, which is a square two genus three surface. After two blow-ups, we get a relatively minimal pair $(\tilde W, \tilde \Sigma)$, where $\tilde W=W\#2\CPbar{}$ and $\tilde \Sigma$ is the proper transform of $\Sigma$. Now that $\tilde \Sigma\subset \tilde W$ and $\tilde F_3\subset \tilde X$ are both square zero genus three symplectic surfaces, we take the fiber sum $Y_X = \tilde X\#_{\tilde F_3=\tilde \Sigma} \tilde W$. Usher's theorem shows that $Y_X$ is minimal. When $X=B$, $Y_B$ has algebraic invariants $(b_2^+,b_2^-) = (5,10)$. When $X=C$, $(b_2^+,b_2^-) = (5,12)$, and when $X=D$, $(b_2^+,b_2^-) = (5,14)$. We claim that for all $X\in \{B,C,D\}$, $Y_X$ is a telescoping triple. This is similar to the $\pi_1$ computations in the previous subsection. We start with the amalgamated free product $\pi_1(Y_X)=\pi_1(\tilde X-\tilde F_3)\ast_G \pi_1(\tilde W-\tilde \Sigma)$, where $G=\pi_1(S^1\times\Sigma_3)$. Because of the exceptional sphere from the blow-up in $\tilde X$, $\mu_{\tilde F_3}=\mu_{\tilde \Sigma}=1$ in the free product, so the generators and relations coming from $\pi_1(\tilde W-\tilde \Sigma)$ reduce to those in $\pi_1(\tilde W)\cong \pi_1(T^2\times \Sigma_2)$. Each of the six generators of $\pi_1(T^2\times \Sigma_2)$ is the image of a standard generator of $\pi_1(\Sigma_3)$ in the map induced by $\tilde \Sigma\hookrightarrow \tilde W$, so in the fiber sum, each of these generators is identified with an element in $\pi_1(\tilde X-\tilde F_3)\cong \pi_1(\tilde X)$. This only adds commutator relations to $\pi_1(\tilde X-\tilde F_3)$, which does not change the fundamental group since $\pi_1(\tilde X-\tilde F_3)$ is abelian. Therefore $\pi_1(Y_X)\cong \pi_1(\tilde X-\tilde F_3)\cong \pi_1(X)$. Since $T_1$ and $T_2$ are disjoint from $\tilde F_3$, their inclusions induce the same maps on $\pi_1$ as before fiber-summing. Therefore $Y_X$ is a telescoping triple, so by Proposition \ref{prop: 1/2 surgery}, we obtain a manifold $Y''_X$ which has the same algebraic invariants as $Y_X$ but an order two fundamental group. The $w_2$--type of $Y''_X$ is confirmed by its signature being odd. This concludes our argument that when $X=B,C,D$, $Y''_X$ is an irreducible copy of $R_{5,10}, R_{5,12}, R_{5,14}$, respectively.


\section{Summary of progress with a nice graph, plus final constructions}

Let us put all of our constructions on a graph, combining our work for $m$ being both even and odd. In the previous sections, we found irreducible copies of $R_{m,n}$ satisfying $\sigma\leq 0$ and $c_1^2 = 4+5b_2^+-b_2^-\geq 0$. By reversing orientation, we realize an irreducible copy of $R_{n,m}$ for each $R_{m,n}$ that we find, so our geography is symmetric over the line $\sigma = 0$. We are therefore populating the geography plane between the lines $b_2^-\leq 4+5b_2^+$ and $b_2^+\leq 4+5b_2^-$, which are shown in red in Figure \ref{fig:plane}.

\begin{figure}[h]
    \centering
    \begin{tikzpicture}
        \begin{axis}[
            axis lines = middle,
            xlabel = $b_2^+$,
            ylabel = {$b_2^-$},
            xmin=0, xmax=15,
            ymin=0, ymax=15,
            xtick  = {1,2,...,15},
            ytick = {1,2,...,15},
            grid=both,
            minor tick num=0,
            ticklabel style = {font=\tiny}
        ]
    \addplot [
        domain=0:2.2, 
        samples=2,
        color=red,]
    {5*x + 4};
	\addplot [
	domain=0:15, 
	samples=2,
	color=red,]
	{0.2*x -0.8};

    \addplot[
    only marks,
    mark=*,
    mark options={scale=1, fill= white},
    color=black
    ]
    coordinates {
    	(1,4) (1,6) (1,8) (1,9)
    	(2,5) (2,7) (2,9) (2,10)
    	(3,12) (3,14)
    	(4,13) (4,15)
    	
    	(1,1) (2,2) (3,3) (4,4) (5,5) (6,6) (7,7)
    	
    	
    	(4, 1) (6, 1) (8, 1) (9, 1) (5, 2) (7, 2) (9, 2) (10, 2)(12, 3) (14, 3) (13, 4) (15, 4)

    };

        \end{axis}
    \end{tikzpicture}
    \caption{The graph with 31 points missing between the red lines}
    \label{fig:plane}
\end{figure}
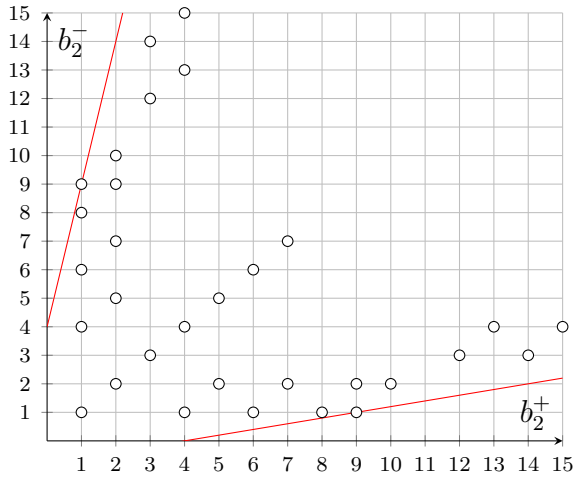

Irreducible copies of $R_{m,n}$ with $\sigma \in \{-1,0,1\}$ were found in \cite{baykur2024smooth} for all cases except the seven points $R_{m,m}$ for $1\leq m\leq 7$. Although the authors don't state irreducibility for their $\sigma = -1$ construction in Section 4 explicitly, this fact follows from their computation of SW-invariants in Lemma 4.8. Our constructions addresses the remaining part of the geography plane. To be precise, let $\mathcal S$ be the region in $\Z_{>0}\times \Z_{>0}$ given by $\mathcal S = \{(m,n): m-n<-1 \text{ and } n\leq 4+5m\}$. Let $\overline{\mathcal S}$ be the region obtained by reflecting $\mathcal S$ over $\sigma=0$. Within $\mathcal S$, there are $12$ points which remain missing. These are the $12$ coordinates in Figure \ref{fig:plane} strictly above the line $\sigma=0$.

\subsection{Remaining points.} We end by obtaining these final missing points in $\mathcal S$ using constructions from \cite{akhmedov2009exotic} and \cite{Bay_Kork}. In this case, we use similar methods to cover points with $b_2^+$ odd and even at the same time, which is why these arguments are an addendum to the work in Sections \ref{section:even_costructions} and \ref{sec:odd_constructions}.

\subsubsection{$R_{1,4}$, $R_{1,6}$.}

In \cite[Section 11]{akhmedov2009exotic}, the authors construct irreducible copies of $\CPsum{}{4}$ and $\CPsum{}{6}$. This construction can be modified to obtain irreducible copies of $R_{1,4}$ and $R_{1,6}$. In their constructions, $M(1/r,1/q)$ is a manifold obtained through two Luttinger surgeries on $T^4\# 2 \CPbar{}$, which contains a symplectically embedded genus two surfaace $\hat\Sigma$ with a trivial normal bundle. Similarly, $Z''(1/q,1/r)$ is obtained through two Luttinger surgeries on $T^4\#\CPbar{}$, and has a symplectically embedded square zero genus two surface $\bar \Sigma$. For a certain gluing map $\psi:\partial \nu(\hat \Sigma)\to \partial \nu(\bar \Sigma)$, the fiber sum $M(1,1)\#_\psi Z''(1,1)$ is an irreducible copy of $\CPsum{}{4}.$ We claim that $X:= M(1,1/2)\#_\psi Z''(1,1)$ is an irreducible copy of $R_{1,4}$. To see this, note that irreducibilty of $X$ follows from Usher's theorem. If $\pi_1(X)\cong \Z_2$, the $w_2$--type of $X$ will follow from its signature being odd, so the fundamental group is all that remains to check. For this, we rely on the following presentations of $\pi_1(M(1,1/2))$, which is provided by \cite[Lemma 12]{akhmedov2009exotic}.
\begin{align*}
	\pi_1(M(1,1/2)-\hat \Sigma) = \langle \beta_1,\beta_2,\beta_3,\beta_4 |& \beta_1=[\beta_2^{-1},\beta_4^{-1}], \beta_2^2 =[\beta_1^{-1},\beta_4],\\
	& [\beta_1,\beta_3]=[\beta_2,\beta_3]=[\beta_1,\beta_2]=[\beta_3,\beta_4]=1 \rangle.
\end{align*} Moreover, $\pi_1(M(1,1/2)-\hat \Sigma)\cong \pi_1(M(1,1/2))$ because $\mu_{\hat \Sigma}$ is trivial in $\pi_1(M(1,1/2)-\hat \Sigma)$. Then by Lemma \ref{lemma:meridian_product}, $\pi_1(X) = \pi_1(M(1,1/2))\ast_G \pi_1(Z''(1,1)-\bar \Sigma)/N(\mu_{\bar\Sigma})=\pi_1(M(1,1/2))\ast_G \pi_1(Z''(1,1))$, where $G=\pi_1(S^1\times \Sigma_2)$. Note that $Z''(1,1)$ is a blow up of $M(1,1)$, and the same surgered tori up to index relabeling, so we again use \cite[Lemma 12]{akhmedov2009exotic}:
\begin{align*}
	\pi_1(Z''(1,1)) \cong \langle \alpha_1,\alpha_2,\alpha_3,\alpha_4 & \mid
	\alpha_3=[\alpha_1^{-1},\alpha_4^{-1}],\alpha_4=[\alpha_1,\alpha_3^{-1}],\\
	&[\alpha_1, \alpha_2] = [\alpha_2,\alpha_3]=[\alpha_2,\alpha_4]=[\alpha_3,\alpha_4]=1 \rangle.
\end{align*}

Next, we review how $M(1,1/2) - \nu(\hat \Sigma)$ and $Z''(1,1)-\nu(\bar{\Sigma})$ are glued under $\psi$. The map $\psi$ sends the standard generators of $\pi_1(\hat \Sigma)$ to the corresponding standard generators of $\pi_1(\bar\Sigma)$ \cite[Section 11]{akhmedov2009exotic}. Then the inclusions $\bar \Sigma\hookrightarrow Z''(1,1)$ and $\hat \Sigma \hookrightarrow M(1,1/2)$ map the standard generators as follows:
\begin{align*}
	\bar{a}_1^{\vert\vert} \mapsto \alpha_1 & \hspace{20 pt}
	\bar{b}_1^{\vert\vert} \mapsto \alpha_2 & \hspace{20 pt}
	\bar{a}_2^{\vert\vert} \mapsto \alpha_3^2 & \hspace{20 pt}
	\bar{b}_2^{\vert\vert} \mapsto \alpha_4 \\
	\hat{a}_1^{\vert\vert} \mapsto \beta_1 & \hspace{20pt}
	\hat{b}_1^{\vert\vert} \mapsto \beta_2 & \hspace{20 pt}
	\hat{a}_2^{\vert\vert} \mapsto \beta_3 & \hspace{20 pt}
	\hat{b}_2^{\vert\vert} \mapsto \beta_4. 
\end{align*} So from $\psi$, we get the identifications $\alpha_i = \beta_i$ for $i\neq 3$ and $\alpha_3^2 = \beta_3$. After applying Seifert-Van Kampen, $\pi_1(X)$ is generated by $\alpha_i$. The relations, in addition to the presentation of $Z''(1,1)$ above, are $\alpha_1=[\alpha_2^{-1},\alpha_4^{-1}], \alpha_2^2 =[\alpha_1^{-1},\alpha_4], [\alpha_1,\alpha^2_3]=[\alpha_2,\alpha^2_3]=[\alpha_1,\alpha_2]=[\alpha^2_3,\alpha_4]=1$. Because $\alpha_2$ and $\alpha_4$ commute, $\alpha_1$ is trivial. But then $\alpha_3= [\alpha_1^{-1},\alpha_4^{-1}]= 1$. Similarly, $\alpha_4=1$, so all commutators vanish. The only generator left is $\alpha_2$, and only relation left is $\alpha_2^2$. So we conclude that $\pi_1(X)\cong \Z_2$, making $X$ an irreducible copy of $R_{1,4}$.\\

The construction of an irreducible $R_{1,6}$ is similar. This time, we take a fiber sum of $Y:=(T^2\times S^2)\#4\CPbar{}$ and $Z''(1/2,1)$ along symplectically--embedded genus two surfaces $\tilde \Sigma\subset Y$ and $\bar \Sigma \subset Z''(1/2,1)$. In this case, $\mu_{\tilde \Sigma}=1\in \pi_1(Y-\tilde \Sigma)$, and so by Lemma \ref{lemma:meridian_product}, \[
\pi_1(Y\#_{\tilde \Sigma=\bar \Sigma} Z''(1/2,1)) = \pi_1(Y) \ast_G \pi_1(Z''(1/2,1)),\]
where $G=\pi_1(S^1\times \Sigma_2)$. This time,
\begin{align*}
	\pi_1(Z''(1/2,1)) \cong \langle \alpha_1,\alpha_2,\alpha_3,\alpha_4 \mid & 
	\alpha_3^2=[\alpha_1^{-1},\alpha_4^{-1}],\alpha_4=[\alpha_1,\alpha_3^{-1}],\\
	&[\alpha_1, \alpha_2] = [\alpha_2,\alpha_3]=[\alpha_2,\alpha_4]=[\alpha_3,\alpha_4]=1 \rangle.
\end{align*}
On the other hand, $\pi_1(Y)$ is a free abelian group with standard generators $x$ and $y$. The embedding of $\tilde{\Sigma}$ in $Y$ induces map on generators mapping on $\set{\tilde{a_i}^{\pm 1},\tilde{b_i}^{\pm 1}}$ to $\set{x,y}$ (The exact signs of the identifications won't make a difference here, see \cite[Section 11]{akhmedov2009exotic} for the precise gluing.) We can assume that this induces $\alpha_1^{\pm 1} = x = \alpha_3$, and $\alpha_2^{\pm 1}=y=\alpha_4$ identifications in the amalgamated product. This results in the following $\pi_1$ presentation for the fiber sum: \begin{align*}
	\langle \alpha_1,\alpha_2,\alpha_3, \alpha_4, x, y \mid & [\alpha_1, \alpha_2] = [\alpha_2,\alpha_3]=[\alpha_2,\alpha_4]=[\alpha_3,\alpha_4]=[x,y]=1, \\
	& \alpha_3^2=[\alpha_1^{-1},\alpha_4^{-1}],\alpha_4=[\alpha_1,\alpha_3^{-1}], x=\alpha_1=\alpha_3^{\pm 1}, y = \alpha_2=\alpha_4^{\pm 1}\rangle ,\\
	&\cong \langle \alpha_1, \alpha_2, \alpha_3, \alpha_4 \mid [\alpha_i,\alpha_j]=1 , \alpha_3^2=1, \alpha_4=1, \alpha_1 = \alpha_3^{\pm 1}, \alpha_2 = \alpha_4^{\pm 1} \rangle \\
	& \cong \langle \alpha_3 \mid \alpha_3^2 \rangle.
\end{align*} Hence the fiber sum is an irreducible copy of $R_{1,6}$.

\subsubsection{$R_{2,5}$, $R_{2,7}$.} In this case, we apply the $\Z_2$--construction to the irreducible copies of $\CPsum{}{4}$ and $\CPsum{}{6}$ along embedded genus two surfaces given in \cite[Section 11]{akhmedov2009exotic}. Let $X=M(1,1)\#_\psi Z''(1,1)$, and let $\bar\Sigma^{||}$ be a parallel copy of $\bar \Sigma$ in $Z''(1,1)$, which survives as an embedded surface in $X$. We claim that the $\Z_2$ construction of $X$ along $\bar\Sigma^{||}$ is an irreducible copy of $R_{2,5}$. For this, the irreducibility, $w_2$--type, and $b_2^{\pm}$ are all immediate. The only thing to check is $\pi_1$, and to do this we compute that $\pi_1(X-\bar\Sigma^{||})=1$. First, write $\pi_1(X-\bar\Sigma^{||})$ as \[
\pi_1(Z''(1,1)-(\bar\Sigma \cup \bar\Sigma^{||}))\ast_G \pi_1(M(1,1)-\hat \Sigma),
\] where $G=\pi_1(S^1\times \Sigma_2)$ Since $\mu_{\hat\Sigma}=1\in \pi_1(M(1,1)-\hat \Sigma)$, by Lemma \ref{lemma:meridian_product} this reduces to \[
\pi_1(Z''(1,1)-\bar\Sigma^{||})\ast_G \pi_1(M(1,1)).
\] The presentation for $\pi_1(M(1,1))$ is: \begin{align*}\langle \beta_1,\beta_2,\beta_3,\beta_4 |& \beta_1=[\beta_2^{-1},\beta_4^{-1}], \beta_2 =[\beta_1^{-1},\beta_4],\\
	& [\beta_1,\beta_3]=[\beta_2,\beta_3]=[\beta_1,\beta_2]=[\beta_3,\beta_4]=1 \rangle.
\end{align*}
The generators for $\pi_1(Z''(1,1)-\bar\Sigma^{||})$ are $\alpha_1,\alpha_2,\alpha_3,\alpha_4,g_1,\cdots g_3$, where each $g_i$ is a conjugate of $[\alpha_3,\alpha_4]$. Some of the relations in $\pi_1(Z''(1,1)-\bar\Sigma^{||})$ are given in \cite[Theorem 6]{akhmedov2009exotic}. On the other hand, $\pi_1(M(1,1))$ is generated by $\beta_1,\beta_2,\beta_3,\beta_4$ with relations given in \cite[Lemma 12]{akhmedov2009exotic}.  From the gluing, the following generators are identified in the amalgamation:\[
\alpha_i=\beta_i \text{ for } i\neq 3,\,\,\, \alpha_3^2=\beta_3.
\]

Now the generators of the presentation in the amalgamation is reduced to the group generated by $\alpha_i$ and $g_j$. Since $\beta_1= [\beta_2^{-1}, \beta_4^{-1}]$ and $[\alpha_2,\alpha_4]=1$ we get that $\alpha_1=\beta_1=1$. The rest quickly reduces to the identity as well. Hence the $\Z_2$--construction of $X$ along $\bar \Sigma^{||}$ will have order two $\pi_1$, and so is an irreducible copy of $R_{2,5}$.\\

The process for getting an irreducible copy of $R_{2,7}$ is similar, and in fact a bit easier. For this, we perform the $\Z_2$--construction to $Y\#_{\tilde \Sigma = \bar \Sigma} Z''(1,1)$ along $\bar\Sigma^{||}\subset Z''(1,1)$. Similar to the previous case, since $\mu_{\tilde{\Sigma}}$ is trivial,
$$\pi_1(Y-\tilde{\Sigma})\ast_G \pi_1(Z''(1,1)-\bar{\Sigma} - \Sigma^{||})\cong \pi_1(Y)\ast_G \pi_1(Z''(1,1)- \Sigma^{||}).$$
This group is generated by $x,y,\alpha_i, g_j$ and the gluing induces the following identifications: $x=\alpha_1=\alpha_3^{\pm 1}, y=\alpha_2=\alpha_4^{\pm 1}$. Since $x$ and $y$ commute, all $[\alpha_i,\alpha_j]$ must be trivial. Then from the presentation of $\pi_1(Z''(1,1)- \Sigma^{||})$, $\alpha_4=\alpha_3 =1$. It immediately follows that all other generators are trivial. So the complement of the meridian in $Y\#_{\tilde \Sigma = \bar \Sigma} Z''(1,1)$ is simply-connected. Hence, the $\Z_2$--construction is an irreducible copy of $R_{2,7}.$

\subsubsection{$R_{1,8}$, $R_{1,9}$, $R_{2,9}$, $R_{2,10}$.}

In \cite[Theorem 16]{Bay_Kork}, the authors construct minimal genus two Lefschetz fibrations whose total spaces are exotic copies of $\CPsum{}{8}$ and $\CPsum{}{9}$. They show in Remark 18 that their construction can also give minimal Lefschetz fibrations on manifolds homeomorphic to $R_{1,8}$ and $R_{1,9}$. Both of these Lefschetz fibrations have separating vanishing cycles, which implies they have odd intersection forms. This confirms that these exotic copies have the correct $\omega_2$--type.

One can also perform the $\Z_2$--construction to their irreducible copies of $\CPsum{}{8}$ and $\CPsum{}{9}$ along regular fibers of the fibration to get irreducible copies of $R_{2,9}$ and $R_{2,10}$. See \cite[Section 5.1]{Bay_Kork}, where $\tilde{W_2}, \tilde{W_3}$ correspond to the positive factorizations of the above Lefschetz fibrations on exotic $\CPsum{}{8}$ and $\CPsum{}{9}$. This follows as long as the regular fibers have trivial meridians, which holds because both fibrations have sections. To see that the $\Z_2$-construction over $\CPsum{}{9}$ has the correct $\omega_2$-type, note that its double cover is a Lefschetz fibration containing a separating vanishing cycle, hence is odd.

\subsubsection{$R_{3,12}$, $R_{3,14}$, $R_{4,13}$, $R_{4,15}$.}

In \cite[Theorem 20]{Bay_Kork}, the authors construct minimal genus two Lefschetz fibrations whose total spaces are exotic copies of $\CPsum{}{12}$ and $\CPsum{}{14}$. By performing the $\Z_2$-construction to each of them along a regular fiber, we obtain irreducible copies of $R_{4,13}$ and $R_{4,15}$. Note that again the meridian of a regular fiber is trivial in the complement because the Leschetz fibrations admit sections, so the fundamental group of the $\Z_2$--construction is indeed $\Z_2$.

We can do something similar to what they do in \cite[Remark 18]{Bay_Kork} to change the irreducible copies of $\CPsum{}{12}$ and $\CPsum{}{14}$ into something with order two $\pi_1$. This involves replacing the conjugating map $\phi$ in the factorizations for the Lefschetz fibrations on exotic copies of  $\CPsum{}{12}$ and $\CPsum{}{14}$ to a new conjugating map, $\phi'=t_{c_1}^{-2}t_{c_4}$. By \cite[Remark 18]{Bay_Kork}, the fundamental group of the Lefschetz fibration corresponding to $W_1^{\phi'}W_1$ is $\Z_2$. Consider Lefschetz fibration $X'$ with positive factorization $\hat{W_3'} = W_1^{\phi'} W_1 W_1$. It has the same vanishing cycles as $W_1^{\phi'}W_1$, so the fundamental group is again $\Z_2$. In addition, it has the same signature and Euler characteristic as $\hat{W_3} =  W_1^{\phi} W_1 W_1$, which according to \cite[Theorem 20]{Bay_Kork} is homeomorphic to $\CPsum{3}{12}$. Then $X'$ must be an irreducible copy of $R_{3,12}$.

For $R_{3,14}$, we will use same remark but one step more extensively. Namely, they claim that the fundamental group of both $\tilde{W'_1}$ and $\tilde{W'_3}$ has an (abelian) presentation $\langle b_2 \mid 2b_2 \rangle$, for some curve $b_2$ on the fiber. Here $\tilde{W_1} = W_1^{\phi'}W_1$ and $\tilde{W'_3}= W_2^{\phi'}W_2$. If we concatenate these two factorizations, we get $W_1^{\phi'}W_1W_2^{\phi'}W_2$ with the same fundamental group $\langle b_2 \mid 2b_2 \rangle$. To see this, just observe that the Lefschetz fibrations for all three of these positive factorizations have the same vanishing cycles and admit sections. Now consider Lefschetz fibration $Y'$ with positive factorization $W_2^{\phi'}W_2W_1$. Its vanishing cycles contain vanishing cycles of $W_2^{\phi'}W_2$ and is contained in the set of vanishing cycles of $W_1^{\phi'}W_1W_2^{\phi'}W_2$. So the fundamental group of $Y'$ gets ``squeezed'' between the other two with the same presentation $\langle b_2 \mid 2b_2 \rangle$. Hence, $\pi_1(Y')\cong \Z_2$. To see that $Y'$ has the same signature and Euler characteristic as $R_{3,14}$, note its similarity to $\hat{W_3}$ which has the correct invariants according to the \cite[Theorem 20]{Bay_Kork}. So $Y'$ is an irreducible copy of $R_{3,14}$.

\subsection{Conclusion}

In total, there are seven points still missing in $\mathcal S$, all with signature zero. This concludes our proof of Theorem \ref{thm:main_thm}.\\

\begin{rmk}
	In many of our constructions, there are opportunities to address the {\it botany problem}, where one constructs an infinite family of irreducible smooth, manifolds that are homeomorphic to $R_{m,n}$, but pairwise non-diffeomorphic. For instance, when our irreducible copy of $R_{m,n}$ contains a null-homologous torus, it's often possible to obtain an infinite family of distinct smooth structures using Luttinger surgery, as in \cite[Theorem 1]{FPS}. In other cases, when there's an embedded torus with a cusp neighborhood, as is often the case when fiber summing with an elliptic surface, one can solve the botany problem using {\it knot surgery} \cite{FS_knots}, \cite{Park_irreducible}. Both constructions produce infinite families of irreducible smooth structures that are homeomorphic with distinct Seiberg--Witten invariants. Moreover, the surgeries can often be done equivariantly, as in \cite{baykur2024smooth}, and so are compatible with the $\Z_2$--construction. This gives a fairly straightforward approach to solving the botany problem for many of the coordinates in our $\Z_2$--geography. That said, careful work needs to be done to verify that a torus meeting the criteria of \cite{FPS} or \cite{FS_knots} exists for each point in the current geography.
\end{rmk}

\begin{rmk} 
There are a few possibilities for extending the irreducible geography to other cyclic fundamental groups. The authors in \cite{baykur2024smooth} introduce a \textit{circle sum} construction to fill in the \textit{reducible} geography of smooth 4-manifolds with cyclic fundamental group of order $4m+2$. Proposition 6.2 in their paper describes a specific case when the circle sum gives an irreducible manifold. Although this idea has potential to give irreducible copies of other lattice points with $\pi_1=4m+2$, showing irreducibility relies on $SW$ computations which do not generalize easily to our constructions.

Another way to extend the irreducible geography to other cyclic fundamental groups is by changing the coefficient of a torus surgery from $1/2$ to $1/n$, or replacing $E(k)_{2,2}$ with $E(k)_{n,n}$. This would work for odd $b_2^+$ coordinates. It can be done for even $b_2^+$ too, by performing the $\Z_2$--construction after changing the coefficient of a torus surgery. That said, one would need to check that the double has the right cyclic fundamental group after making these adjustments, which is not always guaranteed. And even then, some geography regions are missed entirely by these ideas, such as coordinates with $c_1^2\leq 4$.

\end{rmk}

\bibliography{refs} 

\bibliographystyle{alpha} 

\end{document}